\theoremstyle{definition}
\newtheorem{definition}{Definition}[section]
\newtheorem{notation}[definition]{Notation}
\newtheorem{remark}[definition]{Remark}
\theoremstyle{plain}
\newtheorem{lemma}[definition]{Lemma}
\newtheorem{proposition}[definition]{Proposition}
\newtheorem{theorem}[definition]{Theorem}
\newtheorem{corollary}[definition]{Corollary}
\newcommand{\rational}{\mathbb{Q}}
\newcommand{\field}{\mathbb{F}}
\newcommand{\ass}{\mathbb{A}}
\newcommand{\sltwo}{\mathfrak{sl}_2}
\begin{document}

\title[Universal associative envelopes]
{Universal associative envelopes of \\nonassociative triple systems}

\author{Hader A. Elgendy }

\address{Department of Mathematics and Statistics, University of Saskatchewan, Canada}

\email{hae431@mail.usask.ca}

\maketitle

\begin{abstract}
We construct universal associative envelopes for the nonassociative triple systems arising from
the trilinear operations of Bremner and Peresi applied to the 2-dimensional
simple associative triple system.
We use noncommutative Gr\"obner bases to determine monomial bases, structure constants,
and centers of the universal envelopes.
We show that the infinite dimensional envelopes are closely related to the down-up algebras of
Benkart and Roby.
For the finite dimensional envelopes, we determine the Wedderburn decompositions and classify the
irreducible representations.
\end{abstract}


\section{Introduction}

In this paper we use noncommutative Gr\"obner bases to construct the universal associative enveloping algebras
of the nonassociative triple systems which arise from applying the nonassociative trilinear operations classified
by Bremner and Peresi \cite{BremnerPeresi} to the 2-dimensional simple associative triple system of the first kind in the sense
of Lister \cite{Lister}, namely the space of $2 \times 2$ matrices $A = (a_{ij})$ with $a_{11} = a_{22} = 0$.

The basic goal of this work is reduce the problem of representation of these triple systems, which are the simplest examples of nonassociative triple systems satisfying polynomial identities in degree 3 and 5 to that of associative algebras. They provide natural generalizations of Lie and Jordan triple systems, and therefore have the potential for a wide range of applications to various branches of mathematics, especially to the theory of Lie groups and non-Euclidean geometry \cite{faraut, Freud, Ros}. Indeed, they have arisen in the
study of symmetric spaces \cite{Loos2}, and have been connected with the study of the Yang-Baxter equations \cite{Kamiya}.

By a multilinear $n$-ary operation we mean an element $\omega$ of the group algebra $\mathbb{Q} S_n$
of the symmetric group $S_n$ over the rational field $\mathbb{Q}$.
Following \cite{BremnerPeresi} we say that two operations $\omega_1$, $\omega_2$ are \emph{equivalent}
if they generate the same left ideal.
If $A$ is an associative algebra over $\mathbb{Q}$, then $\omega$ defines a multilinear
$n$-ary operation $\omega(a_1,\dots,a_n)$ on the underlying vector space of $A$:
  \[
  \omega = \sum_{\sigma \in S_n} x_\sigma \sigma
  \quad \implies \quad
  \omega(a_1,\dots,a_n) = \sum_{\sigma \in S_n} x_\sigma a_{\sigma(1)} \cdots a_{\sigma(n)}.
  \]
In this way we obtain a nonassociative $n$-ary algebra which we denote by $A^{\omega}$.

For $n = 2$, every bilinear operation is equivalent to either the zero operation, the associative
operation $ab$, the Lie bracket $[a,b] = ab - ba$, or the Jordan product $a \circ b = \tfrac12 (ab + ba)$.
The polynomial identities of degree $\le 3$ ($\le 4$) satisfied by the Lie bracket
(Jordan product) define Lie algebras (Jordan algebras), the most important
varieties of nonassociative algebras.
For $n = 3$, Bremner and Peresi \cite{BremnerPeresi} found canonical representatives of the equivalence classes of
trilinear operations, and identified 19 operations satisfying polynomial identities of degree 5 which do
not follow from the identities of degree 3. These operations include the Lie, anti-Lie, Jordan, and anti-Jordan triple products.

In Section \ref{twentytwo}, we find simpler operations equivalent to those of \cite{BremnerPeresi};
our operations have coefficients $\pm 1, \pm 2$ and most have coefficients $\pm 1$.
We augment this list with the symmetric, alternating and cyclic sums; see Table \ref{matrixforms}.


  \begin{table}
  \begin{tabular}{lllll}
  symmetric sum &\quad&
  $a ( b \circ c ) + b ( c \circ a ) + c ( a \circ b )$
  \\
  alternating sum   &\quad&
  $a [ b, c ] + b [ c, a ] + c [ a, b ]$
  \\
  cyclic sum   &\quad&
  $abc + bca + cab$
  \\
  Lie family $q = \infty$  &\quad&
  $[ a, [ b, c ] ]$
  \\
  Lie family $q = \frac12$ &\quad&
  $[ a, b \circ c ]$
  \\
  Jordan family $q = \infty$ &\quad&
  $abc + cba$
  \\
  Jordan family $q = 0$ &\quad&
  $( a \circ b ) c$
  \\
  Jordan family $q = \frac12$ &\quad&
  $a ( b \circ c ) + c (a \circ b ) + (c \circ a) b$
  \\
  Jordan family $q = 1$  &\quad&
  $a ( b \circ c )$
  \\
  anti-Jordan family $q = \infty$ &\quad&
  $a [ b, c ] + c [ a, b ] + [ c, a ] b$
  \\
  anti-Jordan family $q = -1$ &\quad&
  $a [ b, c ]$
  \\
  anti-Jordan family $q = \frac12$ &\quad&
  $abc - cba$
  \\
  anti-Jordan family $q = 2$ &\quad&
  $[ a, b ] c $
  \\
  fourth family $q = \infty$ &\quad&
  $abc - acb - bac$
  \\
  fourth family $q = 0$ &\quad&
  $abc - acb + bca$
  \\
  fourth family $q = 1$ &\quad&
  $abc - bac + cab$
  \\
  fourth family $q = -1$ &\quad&
  $abc + bac + cab$
  \\
  fourth family $q = 2$ &\quad&
  $abc + acb + bca$
  \\
  fourth family $q = \frac12$ &\quad&
  $abc + acb + bac$
  \\
  cyclic commutator  &\quad&
  $[ a, bc ]$
  \\
  weakly commutative  &\quad&
  $\Big\{\begin{array}{l} abc + acb + bac - cba, \\ abc - acb + 2bac \end{array}$
  \\
  weakly anticommutative &\quad&
  $\Big\{ \begin{array}{l} abc + acb - bac - cba, \\ abc + acb - 2bac \end{array}$
  \end{tabular}
  \bigskip
  \caption{The twenty-two trilinear operations}
  \label{matrixforms}
  \end{table}


In Section \ref{universal} we recall the theory of noncommutative Gr\"obner
bases for ideals in free associative algebras, and use it to define universal associative
envelopes $U(A^\omega)$ of nonassociative $n$-ary algebras $A^\omega$ defined by multilinear operations $\omega$.

In Section \ref{infinite} we recall the down-up algebras of Benkart and Roby \cite{Roby}.
We then consider the cases in which $U(A^{\omega})$ is infinite dimensional: we
determine monomial bases and structure constants, identify the center, and
determine the Gelfand-Kirillov dimension.
In every case, $U(A^{\omega})$ is either a free associative algebra, a down-up algebra,
or a quotient of a down-up algebra.

In Section \ref{finite} we consider the cases in which $U(A^{\omega})$ is finite dimensional.
We use an algorithmic version of the structure theory for finite dimensional associative algebras
to determine the Wedderburn decompositions and classify the irreducible representations.
In most cases we obtain only the trivial 1-dimensional representation and the natural 2-dimensional representation.

The results of Sections \ref{infinite} and \ref{finite} are summarized in Table \ref{summary}.
We distinguish trilinear operations of ``Lie type'' for which $U(A^{\omega})$ is infinite dimensional,
and those of ``Jordan type'' for which $U(A^{\omega})$ is finite dimensional.
Recall that for a finite dimensional Lie algebra $L$, the universal associative envelope $U(L)$ is infinite dimensional,
and the map $L \to U(L)$ is injective;
whereas for a finite dimensional Jordan algebra $J$, the universal associative envelope $U(J)$ is finite dimensional,
and the map $J \to U(J)$ is injective if and only if $J$ is special.

For the cases where the universal associative envelope is finite dimensional, as in the representation theory of Jordan algebras and triple systems \cite{{Jacobson}, {Loos}, {Mey}}, we obtain a complete description of the Wedderburn decomposition of the universal associative envelope, and this provides a complete classification of the finite dimensional irreducible representation of the triple system.  These cases also provide natural examples where the computational approach to the Wedderburn decomposition can be applied.
For the cases where the universal associative envelope is infinite dimensional, as in the representation theory of Lie algebras and triple systems \cite{Hodge}, we provide a monomial basis for the envelope and the structures constants with respect to this basis. These cases are closely related to the theory of down-up algebras, and provide a natural way to generalize down-up algebras to structures with more than two generators and/or defined by relations of degree greater than 3. These cases also provide natural generalizations of the enveloping algebras of Lie algebras, and may therefore be connected to the theory of quantum groups \cite{chari}.

We assume throughout that the base field $\field$ has characteristic 0;
if necessary, we assume that $\field$ is algebraically closed.

  \begin{table}
  \begin{tabular}{lccc}
  \multicolumn{4}{l}{Operations of Lie type}
  \\ \midrule
  & $\dim{U(A^\omega)}$ & GK-dim & $U(A^\omega)$
  \\
  $\Big\{\begin{array}{ll}\text{symmetric sum} &
 \\\text{cyclic sum} &\end{array}$&$\infty$&1&$A(-1,-1,1)/ \langle a^3, b^3\rangle$\\alternating sum&$\infty$&$\infty$&$F\langle a, b\rangle$\\$\Big\{\begin{array}{ll}\text{Lie $q = \infty$} & \\\text{anti-Jordan $q = \infty$} &\end{array}$&$\infty$&3&$A(2,-1,-2)$\\$\Big\{\begin{array}{ll}\text{Lie $q = \tfrac{1}{2}$} & \\\text{anti-Jordan $q = \tfrac{1}{2}$} &\end{array}$&$\infty$&3&$A(0,1,0)$
  \\
  \\
  \multicolumn{4}{l}{Operations of Jordan type}
  \\ \midrule
  & $\dim{U(A^\omega)}$ & GK-dim & $U(A^\omega)$
  \\
  $\left\{\begin{array}{lllllll}\text{Jordan $q = \infty$ } & \\\text{Jordan $q = \tfrac{1}{2} $} &\\\text{anti-Jordan $q = -1$}&\\\text{anti-Jordan $q = 2$}&\\\text{fourth family $q = \infty$}&\\\text{fourth family $q = -1$}&\\\text{fourth family $q = 2$}&\\\text{fourth family $q = \tfrac{1}{2}$}&\\\text{cyclic commutator }&\\\text{weakly commutative}&\\\text{weakly anticommutative}&\\\end{array}\right.$&5&0&$\rational \oplus M_{2\times 2}$\\$\Big\{\begin{array}{ll}\text{Jordan $q = 0$} & \\\text{fourth family $q = 0$} &\end{array}$&9&0&$\mathfrak{R}\oplus \rational \oplus M_{2\times 2}$\\$\Big\{\begin{array}{ll}\text{Jordan $ q = 1$ } & \\\text{fourth family $q = 1$} &\end{array}$&9&0&$\mathfrak{R} \oplus \rational \oplus M_{2\times 2}$\end{tabular}\bigskip\caption{Structure of the universal associative envelopes}\label{summary}\end{table}


\section{The twenty-two trilinear operations} \label{twentytwo}

A natural basis for $\mathbb{Q} S_3$ consists of the six permutations in lexicographical order:
$\{ abc, acb, bac, bca, cab, cba \}$.
Another natural basis consists of the matrix units
$\{ S, E_{11}, E_{12}, E_{21}, E_{22}, A \}$
for the decomposition as a direct sum
$\mathbb{Q} \oplus M_2(\mathbb{Q}) \oplus \mathbb{Q}$
of simple ideals
corresponding to the partitions $3 = 2 + 1 = 1 + 1 + 1$ which label
the irreducible representations of $S_3$.
Bremner and Peresi \cite{BremnerPeresi} give the matrix $M$ whose columns express the matrix units
as linear combinations of the permutations:
  \[
  M
  =
  \frac16
  \left[
  \begin{array}{rrrrrr}
  1 &\!\!  2 &\!\!  0 &\!\!  0 &\!\!  2 &\!\!  1 \\
  1 &\!\!  0 &\!\!  2 &\!\!  2 &\!\!  0 &\!\! -1 \\
  1 &\!\!  2 &\!\! -2 &\!\!  0 &\!\! -2 &\!\! -1 \\
  1 &\!\! -2 &\!\!  2 &\!\! -2 &\!\!  0 &\!\!  1 \\
  1 &\!\!  0 &\!\! -2 &\!\!  2 &\!\! -2 &\!\!  1 \\
  1 &\!\! -2 &\!\!  0 &\!\! -2 &\!\!  2 &\!\! -1
  \end{array}
  \right],
  \quad
  M^{-1}
  =
  \left[
  \begin{array}{rrrrrr}
  1 &\!\! 1 &\!\!  1 &\!\!  1 &\!\!  1 &\!\!  1 \\
  1 &\!\!  0 &\!\!  1 &\!\!  0 &\!\!  -1 &\!\! -1 \\
  0 &\!\!  1 &\!\! 0 &\!\!  1 &\!\! -1 &\!\! -1 \\
  0 &\!\! 1 &\!\!  -1 &\!\! -1 &\!\!  1 &\!\!  0 \\
  1 &\!\!  0 &\!\! -1 &\!\!  -1 &\!\! 0 &\!\!  1 \\
  1 &\!\! -1 &\!\!  -1 &\!\! 1 &\!\!  1 &\!\! -1
  \end{array}
  \right].
  \]
Given a trilinear operation
$\omega = x_1 abc + x_2 acb + x_3 bac + x_4 bca + x_5 cab + x_6 cba$
with coefficient vector $X = [ x_1, x_2, x_3, x_4, x_5, x_6 ]^t$, one obtains its matrix form,
  \[
  Y
  =
  \left[ \,
  y_1, \, \begin{bmatrix} y_2 & y_3 \\ y_4 & y_5 \end{bmatrix}\!, \, y_6
  \, \right],
  \]
by $Y = M^{-1} X$.
Two operations are equivalent if and only if their matrix forms are row-equivalent; hence
canonical representatives of the equivalence classes are the operations for which
each component matrix is in row canonical form.

To find the simplest representative of each equivalence class,
we first consider the $3^5 = 243$ operations whose coefficients in the permutation basis are
$[ 1, x_2, \dots, x_6 ]$ where $x_2, \dots, x_6 \in \{ 1, 0, -1 \}$.
We record the operations whose matrix forms are among the canonical representatives \cite{BremnerPeresi};
this gives 20 of the 22 operations.
We next consider the $5^5 = 3125$ operations whose coefficients in the permutation basis
satisfy $x_2, \dots, x_6 \in \{ 2, 1, 0, -1, -2 \}$; this gives the
remaining two operations, and also produces
alternative forms of the last two operations.
In more than half of the cases, the simplified operations of Table \ref{matrixforms} are more natural
than the original operations, since they can be easily expressed in terms of the Lie bracket
and the Jordan product.


\section{Gr\"obner bases and universal envelopes} \label{universal}

We first recall basic results about noncommutative Gr\"obner bases for ideals in free associative algebras.
Standard references are Bergman \cite{Bergman}, de Graaf \cite{deGraaf}; for another application to $n$-ary algebras,
see Elgendy and Bremner \cite{ElgendyBremner}.

\begin{definition}\label{dd}
Let $X = \{ x_1, \dots, x_n \}$ be a set of symbols with total order
$x_i < x_j$ $\iff$ $i < j$. The \emph{free monoid} $X^\ast$ on $X$ consists
of all (possibly empty) words $w = x_{i_1} \cdots x_{i_k}$ ($k
\ge 0$) with the associative operation of concatenation. For $w = x_{i_1}
\cdots x_{i_k} \in X^*$ the \emph{degree} is $\deg(w) = k$. The \emph{free
unital associative algebra} $F \langle X \rangle$ is the vector space  with basis $X^*$ and multiplication extended bilinearly.
The \emph{deglex} (degree-lexicographical) order on $X^\ast$ is defined by:
$u < v$ if and only if either ($i$)
$\deg(u) < \deg(v)$ or ($ii$) $\deg(u) = \deg(v)$ and $u = w x_i u'$, $v = w
x_j v'$ where $x_i < x_j$ ($w, u', v' \in X^\ast$). We say that $u \in X^*$ is
a \emph{factor} of $v \in X^*$ if there exist $w_1, w_2 \in X^*$ such that $w_1
u w_2 = v$. If $w_1$ ($w_2$) is empty then $u$ is a \emph{left}
(\emph{right}) factor of $v$.
\end{definition}

\begin{definition}
The \emph{support} of $f \in F\langle X \rangle$ is
the set of monomials $w \in X^\ast$ that occur in $f$ with nonzero
coefficient. The \emph{leading monomial} $\mathrm{LM}(f)$ is the greatest element of the support of $f$.
For an ideal $I \subseteq F\langle X \rangle$ the set of
\emph{normal words} modulo $I$ is $N(I) = \{ \, u \in X^* \mid
\text{$u \ne \mathrm{LM}(f)$ for any $f \in I$} \, \}$, and $C(I)$ is the
subspace of $F\langle X \rangle$ with basis $N(I)$.
\end{definition}

\begin{proposition} \label{C(I)proposition}
We have $F\langle X \rangle = C(I) \oplus I$.
\end{proposition}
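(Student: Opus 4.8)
The plan is to prove the two assertions separately: that the sum $C(I)+I$ exhausts $F\langle X\rangle$, and that the sum is direct, i.e.\ $C(I)\cap I = 0$. The directness is immediate. Suppose $f\in C(I)\cap I$ were nonzero. Since $f\in I$, its leading monomial $\mathrm{LM}(f)$ is the leading monomial of an element of $I$, so $\mathrm{LM}(f)\notin N(I)$. But $f\in C(I)$ means every monomial in the support of $f$ lies in $N(I)$, in particular $\mathrm{LM}(f)\in N(I)$ --- a contradiction. Hence $C(I)\cap I = 0$.

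For the equality $F\langle X\rangle = C(I)+I$ the key point is that, because $X$ is finite, the deglex order is a well-order on $X^\ast$: there are only finitely many words of each degree, and the degrees are well-ordered by $\nat$, so no infinite strictly descending chain can exist. This legitimizes an argument by minimal counterexample (equivalently, Noetherian induction on $\mathrm{LM}$). Suppose $S := F\langle X\rangle \setminus (C(I)+I)$ is nonempty. Every element of $S$ is nonzero, since $0\in C(I)+I$, so $\{\mathrm{LM}(f) : f\in S\}$ is a nonempty subset of $X^\ast$ and has a least element $w$; fix $f\in S$ with $\mathrm{LM}(f)=w$ and write $f = \alpha w + (\text{terms whose monomials are} < w)$ with $\alpha\neq 0$.

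Now I would split into two cases according to whether $w$ is normal. If $w\in N(I)$, set $f' = f-\alpha w$; then $\alpha w\in C(I)$, and either $f'=0$ or $\mathrm{LM}(f')<w$, so by minimality of $w$ we have $f'\notin S$, i.e.\ $f'\in C(I)+I$, whence $f = f'+\alpha w\in C(I)+I$ --- contradicting $f\in S$. If $w\notin N(I)$, then by definition there is $g\in I$ with $\mathrm{LM}(g)=w$; rescaling $g$ we may assume its leading coefficient equals $\alpha$, so $f' = f-g$ satisfies $f'=0$ or $\mathrm{LM}(f')<w$. Again $f'\in C(I)+I$ by minimality, and $g\in I$, so $f = f'+g\in C(I)+I$, a contradiction. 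Hence $S=\varnothing$ and $F\langle X\rangle = C(I)+I$, which together with the previous paragraph gives $F\langle X\rangle = C(I)\oplus I$.

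The only real subtlety --- and the step I would be most careful about --- is the well-ordering of deglex on $X^\ast$, which underwrites the termination of the reduction process implicit in the case analysis; everything else is bookkeeping. It is worth remarking that the proof in effect describes the \emph{normal form} algorithm: repeatedly subtract a scalar multiple of an ideal element whose leading monomial equals the current leading monomial until the leading monomial is normal, then peel it off and recurse on the remainder; the argument shows this process halts and lands in $C(I)$.
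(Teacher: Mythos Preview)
Your proof is correct. The paper states this proposition without proof, as it is a standard result from the theory of noncommutative Gr\"obner bases (the paper cites Bergman \cite{Bergman} and de Graaf \cite{deGraaf} as general references at the start of Section~\ref{universal}); your argument via well-ordering of the deglex order and Noetherian induction on the leading monomial is exactly the standard one.
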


\begin{definition}\label{No}
Let $G \subset F\langle X \rangle$ be a subset generating an ideal $I$.
An element $f \in F\langle X
\rangle$ is in \emph{normal form modulo} $G$ if no monomial in the support of $f$
has $\mathrm{LM}(g)$ as a factor for any $g \in G$.
\end{definition}

\begin{definition}
A subset $G \subset I$
is a \emph{Gr\"obner basis} of $I$ if for every $f \in I$ there exists $g \in G$
such that $\mathrm{LM}(g)$ is a factor of $\mathrm{LM}(f)$.
\end{definition}

\begin{definition}
A subset $G \subset F\langle X \rangle$ is \emph{self-reduced} if every $g
\in G$ is in normal form modulo $G \setminus \{g\}$ and every $g \in G$ is
\emph{monic}: the coefficient of $\mathrm{LM}(g)$ is 1.
\end{definition}

\begin{definition}
Consider elements $g, h \in F \langle X \rangle$ such that $\mathrm{LM}(g)$ is not a factor of $\mathrm{LM}(h)$ and
$\mathrm{LM}(h)$ is not a factor of $\mathrm{LM}(g)$.
Assume that $u, v \in X^\ast$ satisfy
$\mathrm{LM}(g)\,u = v\,\mathrm{LM}(h)$,
$u$ is a proper right factor of $\mathrm{LM}(h)$, and $v$ is a proper left factor of $\mathrm{LM}(g)$.
Then $g u - v h$ is called a
\emph{composition} of $g$ and $h$.
\end{definition}

\begin{theorem} \label{di}
If $I \subset F\langle X \rangle$ is an ideal generated by a self-reduced set
$G$, then $G$ is a Gr\"obner basis of $I$ if and only if for all compositions
$f$ of the elements of $G$ the normal form of $f$ modulo $G$ is zero.
\end{theorem}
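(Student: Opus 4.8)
The plan is to treat the two implications separately: the direction ``Gr\"obner basis $\Rightarrow$ all compositions reduce to zero'' is short, and all the substance lies in the converse, which is the noncommutative Diamond Lemma, to be proved by the standard minimal-representation argument. For ($\Rightarrow$), suppose $G$ is a Gr\"obner basis of $I$. A composition $f = gu - vh$ of $g,h \in G$ lies in $I$ by construction, and hence so does its normal form $f_0$ modulo $G$, since passing to a normal form only subtracts two-sided multiples $w_1 g' w_2$ of elements $g' \in G$. If $f_0 \ne 0$, then $\mathrm{LM}(f_0)$ has some $\mathrm{LM}(g')$ as a factor because $G$ is a Gr\"obner basis; but $f_0$ being in normal form modulo $G$ means \emph{no} monomial in its support, in particular not $\mathrm{LM}(f_0)$, has any $\mathrm{LM}(g')$ as a factor --- a contradiction. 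Hence $f_0 = 0$.

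For ($\Leftarrow$), assume every composition of elements of $G$ has normal form $0$ modulo $G$, and let $f \in I$ be nonzero; the goal is to show $\mathrm{LM}(f)$ is divisible by some $\mathrm{LM}(g)$, $g \in G$. Since $G$ generates $I$, write $f = \sum_{i=1}^{k} c_i\, u_i\, g_i\, v_i$ with $c_i \in \field$ nonzero, $g_i \in G$, $u_i, v_i \in X^\ast$; as deglex is a well-order compatible with concatenation, every monomial occurring in $f$ is $\le w := \max_i u_i\,\mathrm{LM}(g_i)\,v_i$. Choose such a representation minimizing first $w$ in deglex order and then the number $t$ of indices $i$ with $u_i\,\mathrm{LM}(g_i)\,v_i = w$; we may also assume the triples $(u_i,g_i,v_i)$ attaining $w$ are pairwise distinct, since otherwise combining the equal terms does not increase $w$ and lowers $t$. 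If $t = 1$, then $w$ occurs in $f$ with nonzero coefficient --- no other term reaches it --- so $\mathrm{LM}(f) = w$ is divisible by the corresponding $\mathrm{LM}(g_i)$, and we are done. It remains to rule out $t \ge 2$.

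Suppose $t \ge 2$, and fix two of the top terms: inside the word $w$ they mark an occurrence of $\mathrm{LM}(g_i)$ and an occurrence of $\mathrm{LM}(g_j)$ at distinct positions (possibly $g_i = g_j$). If the two occurrences are disjoint subwords, write $w = L\,\mathrm{LM}(g_i)\,M\,\mathrm{LM}(g_j)\,R$ (after possibly swapping $i,j$); substituting $\mathrm{LM}(g) = g - (g - \mathrm{LM}(g))$ into each of the two factors shows that $c_i u_i g_i v_i + c_j u_j g_j v_j$ equals a single scalar multiple of $L\,g_i\,M\,g_j\,R$ modulo terms all of whose monomials are $< w$, which lowers $t$ without increasing $w$ and contradicts minimality. (This step is where the freeness of $F\langle X\rangle$ is essential: disjoint subword replacements always commute.) If instead the two occurrences overlap, the overlap exhibits a relation $\mathrm{LM}(g_i)\,u = v\,\mathrm{LM}(g_j)$ with $u$ a proper right factor of $\mathrm{LM}(g_j)$ and $v$ a proper left factor of $\mathrm{LM}(g_i)$ --- that is, a composition $g_i u - v g_j$ of $g_i$ and $g_j$ --- with $w = L\,\mathrm{LM}(g_i)\,u\,R = L\,v\,\mathrm{LM}(g_j)\,R$ for suitable context words $L,R$. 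By hypothesis $g_i u - v g_j = \sum_\ell d_\ell\, a_\ell\, h_\ell\, b_\ell$ with $h_\ell \in G$ and each $a_\ell\,\mathrm{LM}(h_\ell)\,b_\ell < \mathrm{LM}(g_i)\,u$; multiplying this identity (scaled to match the leading coefficients of the two chosen terms) on the left by $L$ and on the right by $R$, and substituting for the combination of the $i$- and $j$-terms of $f$, yields a representation of $f$ in which $w$ has strictly decreased, or $w$ is unchanged but $t$ has dropped --- again contradicting minimality. Hence $t = 1$ always, so $G$ is a Gr\"obner basis; combined with Proposition~\ref{C(I)proposition} this identifies $N(I)$ with the monomials having no $\mathrm{LM}(g)$ as a factor.

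The main obstacle is the overlap case: one must translate the abstract definition of a composition into a concrete marked overlap sitting inside $w$, carry the context words $L,R$ and the scalar bookkeeping through the substitution, treat the self-overlap case $g_i = g_j$ (where the phrase ``neither leading monomial is a factor of the other'' should be read as permitting a proper self-overlap of $\mathrm{LM}(g_i)$), and --- the crux --- verify that transporting the composition's reduction-to-zero through $L(\cdot)R$ and substituting it back genuinely strictly decreases the pair $(w,t)$ in the well-founded order, so that no monomial $\ge w$ and no new top term is created. The forward direction, the disjoint-subword case, and the base case $t=1$ are all essentially immediate from the definitions once deglex is known to be a well-order compatible with multiplication.
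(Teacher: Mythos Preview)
The paper does not give its own proof of this theorem: it is stated as background, with Bergman and de~Graaf cited as standard references, and is then used as a black box in the Gr\"obner-basis computations. So there is no in-paper argument to compare against; what can be said is that your write-up is a correct rendition of the standard minimal-representation proof of the noncommutative Diamond Lemma (exactly the argument one finds in Bergman), and it would serve perfectly well as a proof here.

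Two small remarks. First, your handling of the disjoint case could be phrased a bit more carefully: the ``single scalar multiple of $L\,g_i\,M\,g_j\,R$'' is not literally a term of the required shape $u\,g\,v$ with $u,v\in X^\ast$, so one should expand the lower-order pieces of $g_j$ (respectively $g_i$) to rewrite everything as an $F$-linear combination of monomial-times-$G$-element-times-monomial terms, each with top strictly below $w$, plus at most one surviving top term; you clearly know this, but the sentence as written skips that expansion. Second, your parenthetical about self-overlaps is exactly right and worth making explicit: the paper's Definition of a composition literally requires that $\mathrm{LM}(g)$ not be a factor of $\mathrm{LM}(h)$, which, read pedantically, would exclude $g=h$; for the theorem to be true one must allow proper self-overlaps of a single $\mathrm{LM}(g)$, and the self-reducedness hypothesis is precisely what rules out the only other problematic configuration (one leading monomial sitting inside another for $g\neq h$). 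With those two clarifications your argument is complete.
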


In order to compute a Gr\"obner basis for an ideal $I \subset F\langle X \rangle$,
we start with any set $G$ of generators for $I$, and perform the following algorithm:
  \begin{enumerate}
  \item
  Compute all compositions of elements of $G$.
  Let $H$ be the set of their normal forms modulo $G$.
  \item
  Replace $G$ by $G \cup H$, and self-reduce the new set $G$ by replacing each element by its
  normal form modulo the other elements.
  \end{enumerate}
The algorithm terminates when $H = \emptyset$ or $H = \{0\}$.

We now apply this theory to the construction of universal associative envelopes.
Given a finite dimensional associative algebra $A$, and a multilinear $n$-ary operation $\omega$,
we obtain a nonassociative $n$-ary algebra $A^{\omega}$.

\begin{definition} \label{uaedefinition}
A \emph{universal associative envelope} of $A^{\omega}$ consists of a unital associative algebra
$U(A^\omega)$ and a linear map $i\colon A^{\omega} \to U(A^{\omega})$
satisfying
  \[
  i(\omega(x_1, x_2,\dots, x_n))
  =
  {\omega}
  \left(
  i(x_1), i(x_2), \dots, i(x_n)
  \right),
  \]
for all $x_1, \dots, x_n \in A^{\omega}$,
such that for any unital associative algebra $\ass$ and linear map $j\colon A^{\omega} \to
\ass$ satisfying the same equation with $j$ in place of $i$, there is a unique
homomorphism of unital associative algebras $\psi\colon U(A^\omega) \to \ass$ such that
$\psi \circ i = j$.
\end{definition}

\begin{notation} \label{ydefinition}
Let  $B = \{ e_1, e_2, \dots,e_{m}\}$ be an ordered basis of $A^\omega$,
and let $\phi\colon B \to X = \{ x_1, x_2, \dots x_m\}$ be the bijection
$\phi(e_i) = x_i$. We extend $\phi$ to a linear map, denoted by the same symbol, $\phi\colon A^{\omega}\to F \langle
X \rangle$.
\end{notation}

\begin{definition} \label{idealgenerators}
Consider the following elements of $F\langle X \rangle$:
  \[
  G_{i_1, \dots, i_n} =
  \omega   ( x_{i_1}, \dots, x_{i_n} )  -  \phi( \omega( e_{i_1}, \dots, e_{i_n} )),
  \qquad
  1\leq i_1,\dots,i_n\leq m.
  \]
Let $I$ be the ideal generated by the set of all $G_{i_1, \dots, i_n}$, and define $U(A^\omega)= F\langle X \rangle / I$.
We have the natural surjection $\pi\colon F\langle X \rangle \to U(A^\omega)$ sending $f$ to $f + I$,
and the composition $i = \pi \circ \phi\colon A^{\omega} \to U(A^\omega)$.
\end{definition}

\begin{lemma}
The algebra $U(A^{\omega})$ and the map $i\colon A^{\omega} \to U(A^\omega)$ form the universal
associative envelope of the nonassociative $n$-ary algebra $A^{\omega}$.
\end{lemma}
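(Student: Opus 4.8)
The plan is to verify directly that the pair $(U(A^\omega),i)$ satisfies the two requirements of Definition \ref{uaedefinition}: first that $i$ intertwines the $n$-ary operations, and then the universal factorization property. Throughout I would use the elementary but crucial observation that a unital algebra homomorphism $f$ commutes with any multilinear operation built from the associative multiplication, i.e. $f(\omega(a_1,\dots,a_n)) = f(\sum_\sigma x_\sigma a_{\sigma(1)}\cdots a_{\sigma(n)}) = \sum_\sigma x_\sigma f(a_{\sigma(1)})\cdots f(a_{\sigma(n)}) = \omega(f(a_1),\dots,f(a_n))$.

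For the first requirement I would note that both sides of the identity $i(\omega(y_1,\dots,y_n)) = \omega(i(y_1),\dots,i(y_n))$ are multilinear in $y_1,\dots,y_n \in A^\omega$ (the left side since $\omega$ on $A^\omega$ is multilinear and $i$ is linear, the right side since $\omega$ on $U(A^\omega)$ is built from the bilinear multiplication and $i$ is linear), so it suffices to check it when each $y_j$ is a basis vector $e_{i_j}$. Since $\pi$ is an algebra homomorphism it commutes with $\omega$, giving $\pi(\omega(x_{i_1},\dots,x_{i_n})) = \omega(\pi(x_{i_1}),\dots,\pi(x_{i_n})) = \omega(i(e_{i_1}),\dots,i(e_{i_n}))$, using $i(e_{i_j}) = \pi(\phi(e_{i_j})) = \pi(x_{i_j})$. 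On the other hand $G_{i_1,\dots,i_n}\in I$ gives $\pi(\omega(x_{i_1},\dots,x_{i_n})) = \pi(\phi(\omega(e_{i_1},\dots,e_{i_n}))) = i(\omega(e_{i_1},\dots,e_{i_n}))$. Comparing the two computations yields the identity on basis tuples, hence in general.

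For the universal property, given a unital associative algebra $\ass$ and a linear map $j\colon A^\omega \to \ass$ satisfying the same identity, I would invoke the universal property of the free associative algebra: the assignment $x_i \mapsto j(e_i)$ extends uniquely to a unital algebra homomorphism $\tilde\psi\colon F\langle X\rangle \to \ass$, and by linearity $\tilde\psi\circ\phi = j$. The key step is that $\tilde\psi$ kills $I$; since $I$ is generated by the $G_{i_1,\dots,i_n}$ and $\tilde\psi$ is an algebra homomorphism, it suffices to show $\tilde\psi(G_{i_1,\dots,i_n}) = 0$. Now $\tilde\psi(\omega(x_{i_1},\dots,x_{i_n})) = \omega(j(e_{i_1}),\dots,j(e_{i_n}))$ because $\tilde\psi$ commutes with $\omega$, while $\tilde\psi(\phi(\omega(e_{i_1},\dots,e_{i_n}))) = j(\omega(e_{i_1},\dots,e_{i_n}))$, and their difference vanishes exactly by the hypothesis on $j$. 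Hence $\tilde\psi$ factors as $\tilde\psi = \psi\circ\pi$ for a unique unital algebra homomorphism $\psi\colon U(A^\omega)\to\ass$, and then $\psi\circ i = \psi\circ\pi\circ\phi = \tilde\psi\circ\phi = j$.

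For uniqueness of $\psi$ I would observe that the elements $i(e_i) = x_i + I$ generate $U(A^\omega)$ as a unital algebra, since the $x_i$ generate $F\langle X\rangle$ and $\pi$ is surjective; as any two unital algebra homomorphisms out of $U(A^\omega)$ agreeing on these generators coincide, any $\psi'$ with $\psi'\circ i = j$ equals $\psi$. I do not anticipate a real obstacle: this is the standard "presentation by generators and relations" verification, and the only points needing care are the reduction to basis tuples via multilinearity in the first part and the repeated use of the fact that algebra homomorphisms commute with the operation $\omega$.
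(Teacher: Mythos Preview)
Your proof is correct and complete; the paper itself states this lemma without proof, treating it as a routine verification. Your argument is exactly the standard one the author is tacitly invoking: reduce the compatibility of $i$ with $\omega$ to basis tuples by multilinearity and use that $\pi(G_{i_1,\dots,i_n})=0$, then obtain existence of $\psi$ from the universal property of $F\langle X\rangle$ together with $\tilde\psi(G_{i_1,\dots,i_n})=0$, and uniqueness from the fact that the cosets $x_i+I$ generate $U(A^\omega)$.
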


To obtain the elements $G_{i_1, \dots, i_n}$, we use the structure constants of $A^{\omega}$.
We then use Theorem \ref{di} to compute a Gr\"obner basis of the ideal $I$, and Proposition \ref {C(I)proposition}
to determine a basis of $U(A^{\omega})$.

\begin{notation}
We write  $\delta_{i,j}$ for the Kronecker delta, and  $\widehat{\delta}_{i,j}= 1 - \delta_{i,j}$.
\end{notation}


\section{Infinite dimensional envelopes} \label{infinite}

In this section, we consider the trilinear operations of ``Lie type''.

\begin{definition}
(Benkart and Roby \cite{Roby})
\label{doup}
Let $\field$ be a field and let $\alpha, \beta, \gamma \in \field$ be parameters.
The \emph{down-up algebra} $A(\alpha, \beta, \gamma)$ is the unital associative algebra with generators
$a, b$ and relations
  \[
  b^2 a = \alpha bab + \beta a b^2 + \gamma b,
  \qquad
  ba^2 = \alpha aba + \beta a^2 b + \gamma a.
  \]
\end{definition}

\begin{theorem}
\cite[Theorem 3.1, Corollary 3.2]{Roby}
\label{bas1}
A down-up algebra has basis
  \[
  \mathfrak{B}_1 = \{ a^i (ba)^j b^k \mid i, j, k \geq 0 \},
  \]
and its Gelfand-Kirillov dimension is $3$.
\end{theorem}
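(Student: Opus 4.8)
The plan is to derive both statements from the Gr\"obner basis machinery of Section~\ref{universal}. Realize $A(\alpha,\beta,\gamma)$ as $F\langle X\rangle/I$ with $X=\{a,b\}$, $a<b$, and the deglex order, where $I$ is generated by
\[
g = b^2a - \alpha bab - \beta ab^2 - \gamma b,
\qquad
h = ba^2 - \alpha aba - \beta a^2b - \gamma a .
\]
First I would identify the leading monomials $\mathrm{LM}(g)=b^2a$ and $\mathrm{LM}(h)=ba^2$, and check that $\{g,h\}$ is self-reduced: both are monic, neither leading monomial is a factor of the other, and none of the remaining monomials $bab,ab^2,b,aba,a^2b,a$ contains $b^2a$ or $ba^2$ as a factor.

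Next I would enumerate the overlap ambiguities. Neither $b^2a$ nor $ba^2$ has a nontrivial self-overlap, and the only overlap between them is $b^2a\cdot a = b\cdot ba^2 = b^2a^2$, giving the single composition $f = ga - bh$. A direct computation yields $f = \beta\,(ba^2b - ab^2a)$; reducing $ba^2b$ by $h$ and $ab^2a$ by $g$ both produce $\alpha\,abab + \beta\,a^2b^2 + \gamma\,ab$, so the normal form of $f$ modulo $\{g,h\}$ is $0$. By Theorem~\ref{di}, $\{g,h\}$ is a Gr\"obner basis of $I$.

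I would then describe $N(I)$. The normal words are exactly the elements of $X^\ast$ avoiding $b^2a$ and $ba^2$ as factors; writing a word as an alternating sequence of maximal runs of $a$'s and $b$'s, avoiding $ba^2$ forces every $a$-run that is preceded by a $b$ to have length $1$, and avoiding $b^2a$ forces every $b$-run that is followed by an $a$ to have length $1$, so every normal word has the form $a^i(ba)^jb^k$ with $i,j,k\ge 0$, and conversely each such word is normal. By Proposition~\ref{C(I)proposition}, the images of these words form a basis of $A(\alpha,\beta,\gamma)$, namely $\mathfrak{B}_1$. Finally, taking the generating subspace $V = \field 1 + \field a + \field b$, the dimension of $V + V^2 + \dots + V^n$ equals $\#\{(i,j,k) : i+2j+k\le n\}$, which is a polynomial of degree $3$ in $n$; hence the Gelfand-Kirillov dimension is $3$.

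The only genuine content is the overlap computation of the second step: it is routine, but it is the one place the argument could fail, so I would carry it out in full. The run-length characterization of normal words and the lattice-point count are elementary.
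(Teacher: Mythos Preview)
The paper does not give its own proof of this theorem; it simply cites \cite[Theorem 3.1, Corollary 3.2]{Roby}. Your argument is correct and is essentially the diamond-lemma proof of Benkart and Roby, recast in the Gr\"obner-basis language of Section~\ref{universal}: the pair $\{g,h\}$ is self-reduced, the unique overlap $b^2a^2$ yields the composition $\beta(ba^2b-ab^2a)$ which reduces to $0$ exactly as you compute, the normal words are precisely $a^i(ba)^jb^k$, and the count of lattice points $\{(i,j,k):i+2j+k\le n\}$ grows cubically. One minor caveat for the GK-dimension step: since the relations are not homogeneous when $\gamma\ne 0$, $\dim(V+\cdots+V^n)$ is only a quasi-polynomial in $n$ (depending on the parity of $n$), but its leading term is still cubic, so the conclusion stands.
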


\begin{lemma}
\label{zhaolemma}
\cite[Lemma 2.2]{Zhao}
For any $c_1, c_2 \in \field$, a down-up algebra has basis
  \[
  \mathfrak{B_2} = \{ a^i(ba + c_1 ab + c_2)^j b^k \mid i, j, k \geq 0\}.
  \]
\end{lemma}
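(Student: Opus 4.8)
The plan is to prove Lemma \ref{zhaolemma} by showing that the spanning set $\mathfrak{B}_2 = \{ a^i(ba + c_1 ab + c_2)^j b^k \mid i,j,k \geq 0\}$ is obtained from the known basis $\mathfrak{B}_1 = \{ a^i (ba)^j b^k \mid i,j,k \geq 0\}$ of Theorem \ref{bas1} by an invertible, degree-preserving (in a suitable filtration) change of coordinates. First I would introduce the element $z = ba + c_1 ab + c_2$ and set up the natural $\integer$-grading or $\nat$-filtration on $A(\alpha,\beta,\gamma)$ in which $a$ and $b$ have degree $1$; then $z$ lies in the degree-$\le 2$ part and, modulo lower-degree terms, $z \equiv ba + c_1 ab$. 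The key computation is to express $z^j$ in terms of the basis $\mathfrak{B}_1$: using the down-up relations to normalize, $z^j = (ba)^j + (\text{lower terms in } j, \text{ or terms with fewer central factors})$, so that the transition matrix between $\{a^i z^j b^k\}$ and $\{a^i (ba)^j b^k\}$, graded by the pair $(i+k, j)$ or by total degree $i + 2j + k$, is unitriangular.

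The main step is therefore a filtration argument. I would define, for fixed $i$ and $k$, the subspace $V_{i,k,N} = \mathrm{span}\{ a^i (ba)^j b^k \mid 0 \le j \le N\}$, and show by induction on $N$ that $a^i z^j b^k$ for $0 \le j \le N$ also spans $V_{i,k,N}$, hence is a basis of it. For this I need the relation $ba \cdot ab = $ (something expressible in the $\mathfrak{B}_1$ basis) and, more importantly, that $ab$ acting inside $a^i(\cdot)^j b^k$ can be pushed to the boundary at the cost of lowering $j$; the down-up relations $b^2a = \alpha bab + \beta ab^2 + \gamma b$ and $ba^2 = \alpha aba + \beta a^2 b + \gamma a$ are exactly what allow one to rewrite words so that all the interior $ba$-pairs stay interior and all stray $a$'s and $b$'s migrate outward. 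Combined with the fact that $c_1 ab + c_2$ contributes only strictly lower-order corrections to $z^j$ relative to $(ba)^j$, a standard triangularity-of-the-change-of-basis argument then shows $\mathfrak{B}_2$ spans $A(\alpha,\beta,\gamma)$; linear independence follows because the transition matrix is invertible (indeed unitriangular) with respect to the chosen filtration.

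The hard part will be making the triangularity precise: one must choose the right notion of ``order'' so that $z^j - (ba)^j$ genuinely lies in a strictly smaller subspace, and verify that the normal-form rewriting driven by the two down-up relations never increases this order. Concretely, I expect to track the multidegree of a monomial $a^{p_1} b^{q_1} a^{p_2} b^{q_2} \cdots$ together with a secondary statistic (such as the number of ``descents'' $ba$ that are not yet in the normalized position $a^i(ba)^j b^k$), and argue that each application of a down-up relation either fixes both statistics or strictly decreases the secondary one while preserving total degree — a finite-termination / Noetherian-induction setup. Once that bookkeeping is in place, the conclusion is immediate: $\mathfrak{B}_2$ and $\mathfrak{B}_1$ have the same cardinality in each graded piece and are related by an invertible map, so $\mathfrak{B}_2$ is a basis. (In the write-up one may alternatively cite \cite{Zhao} directly, but the argument above is the natural self-contained route given Theorem \ref{bas1}.)
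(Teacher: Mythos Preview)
The paper gives no proof of this lemma at all; it is stated with a bare citation to \cite[Lemma~2.2]{Zhao} and then used. So there is nothing to compare against except Zhao's original argument.

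Your plan is pointed in the right direction, but you are missing the one computation that makes the triangularity clean rather than a vague ``Noetherian-induction setup.'' In any down-up algebra one has
\[
(ba)(ab) = (ba^2)b = (\alpha aba + \beta a^2 b + \gamma a)b = \alpha abab + \beta a^2 b^2 + \gamma ab,
\]
\[
(ab)(ba) = a(b^2 a) = a(\alpha bab + \beta ab^2 + \gamma b) = \alpha abab + \beta a^2 b^2 + \gamma ab,
\]
so $ab$ and $ba$ commute. This is the key lemma Zhao uses. Once you know that, $z = ba + c_1 ab + c_2$ is a polynomial in two commuting variables, and $z^j$ expands by the multinomial theorem into terms $c_1^q c_2^r \binom{j}{p,q,r}(ba)^p(ab)^q$. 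Since $(ab)^q = a(ba)^{q-1}b$ for $q\ge 1$, one then checks directly that $a^i (ba)^p (ab)^q b^k$ lands in the span of basis monomials $a^{i'} (ba)^{j'} b^{k'}$ with $j' < p+q$ whenever $q\ge 1$. That is the precise triangularity statement: the $\mathfrak B_1$-expansion of $a^i z^j b^k$ has $a^i(ba)^j b^k$ as its unique term with middle exponent $j$, and all other terms have strictly smaller middle exponent. The transition matrix, ordered by $j$ (and then, say, by total degree to make it finite-dimensional on each block), is unitriangular.

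Without isolating the commutativity of $ab$ and $ba$, your proposed bookkeeping with ``number of descents'' and ``secondary statistics'' is doing a lot of work that is neither specified nor obviously terminating in the way you need. In particular, your sentence ``$z^j = (ba)^j + (\text{lower terms})$'' is not justified by the total-degree filtration alone, since $ab$ has the same degree as $ba$; you need the commutativity to control the cross-terms.
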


For the rest of this paper, $A$ is the associative triple system with basis
  \[
  e_1 = \begin{bmatrix} 0 & 1 \\ 0 & 0 \end{bmatrix},
  \qquad
  e_2 = \begin{bmatrix} 0 & 0 \\ 1 & 0 \end{bmatrix}.
  \]
We make the underlying vector space of $A$ into a nonassociative triple system $A^\omega$
in different ways corresponding to the trilinear operations $\omega$ of Table \ref{matrixforms}.
Let $X = \{a,b\}$ with $a < b$, and define $\phi\colon A^{\omega}\to F \langle X \rangle$ by
$\phi(e_1) = a$, $\phi(e_2) = b$.

\subsection{The symmetric sum} \label{firstthree}

The structure constants for $A^\omega$ are determined by
  \[
  [e_1,e_1,e_1] = [e_2, e_2, e_2] = 0,
  \qquad
  [ e_2, e_1, e_1] = 2e_1,
  \qquad
  [e_1, e_2, e_2] = 2e_2.
  \]

\begin{lemma}\label{smm}
A basis for $U(A^\omega)$ is the set $\{ \, a^i (ba)^j b^k \mid 0 \le i, k \le 2, j \ge 0 \, \}$.
\end{lemma}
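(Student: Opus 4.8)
The plan is to compute a Gröbner basis for the ideal $I \subset F\langle X\rangle$ defining $U(A^\omega)$, and then read off the normal words from the leading monomials. First I would write down the defining relations explicitly. Using the structure constants above together with the symmetric sum $\omega(x,y,z) = x(y\circ z) + y(z\circ a) + z(x\circ y)$ — rewritten in terms of the basis monomials of $\mathbb{Q}S_3$ — the generators $G_{i_1 i_2 i_3}$ give (after clearing the factor $\tfrac12$ from the Jordan products and discarding redundancies) a handful of degree-3 elements of $F\langle a,b\rangle$. I expect the essential ones to be equivalent to $a^3$, $b^3$, and the two relations $b^2a - (\text{combination of } bab,\, ab^2,\, b)$ and $ba^2 - (\text{combination of } aba,\, a^2b,\, a)$ — that is, precisely the down-up relations for $A(-1,-1,1)$ together with $a^3 = b^3 = 0$, which matches the entry $A(-1,-1,1)/\langle a^3,b^3\rangle$ in Table \ref{summary}. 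So the first step is to verify that the ideal $I$ is generated by $\{a^3,\, b^3,\, r_1,\, r_2\}$ where $r_1, r_2$ are the two down-up relations with $(\alpha,\beta,\gamma)=(-1,-1,1)$.

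Next I would run the Gröbner basis algorithm from Section \ref{universal} on this generating set under deglex with $a<b$. The leading monomials of the down-up relations are $b^2a$ and $ba^2$ (the lex-greatest degree-3 words appearing), and the leading monomials of $a^3, b^3$ are themselves. I would compute all compositions (overlaps): overlaps among $\{b^2a, ba^2\}$ reproduce the computation Benkart–Roby carry out, which is known to close up and yield the PBW-type basis $\mathfrak{B}_1 = \{a^i(ba)^jb^k\}$ of Theorem \ref{bas1}; the new feature is the interaction of $a^3$ and $b^3$ with these. I expect the overlap of $a^3$ with $ba^2$ (the word $ba^3$, or $b a \cdot a^2$ versus $b \cdot a^3$ — more precisely the overlap word $b^2a^2$ of $b^2a$ and $a^2$... let me instead track it via $b^2 a \cdot a = b \cdot b a^2$-type overlaps) to reduce, after using the down-up straightening, to new leading monomials of the form $(ba)^j$-conjugates of $a^3$ and $b^3$; concretely I anticipate the Gröbner basis stabilizes with leading monomials generating the ideal $\langle b^2a,\ ba^2,\ a^3,\ b^3\rangle$ in the sense that the normal words are exactly $a^i(ba)^jb^k$ with $0\le i\le 2$, $0\le k\le 2$, $j\ge 0$. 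This last point is the key reduction: I would show that imposing $a^3=0$ and $b^3=0$ on the basis $\mathfrak{B}_1$ (equivalently, using Lemma \ref{zhaolemma} with suitable $c_1,c_2$ to move between $ba$ and $ab$) exactly cuts the ranges of $i$ and $k$ down to $\{0,1,2\}$ while leaving $j$ unrestricted, because $ba$ commutes past powers of $a$ and $b$ in a controlled way modulo lower terms.

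The main obstacle I expect is bookkeeping in the composition/reduction step: verifying that no further relations are forced — in particular that words like $a^2(ba)a^{?}$ or $b(ba)^jb^2$ do not collapse further, i.e. that the leading-term ideal is genuinely $\langle a^3, b^3, ba^2, b^2a\rangle$ and not something larger. To handle this cleanly I would argue on the "upper bound" side by exhibiting the spanning set $\{a^i(ba)^jb^k \mid 0\le i,k\le 2,\ j\ge 0\}$ directly (every monomial in $a,b$ reduces into this set using the four relations), and on the "lower bound" side by producing a representation of $U(A^\omega)$ — or invoking the down-up module theory of Benkart–Roby restricted to $A(-1,-1,1)$ — on which these elements act as linearly independent operators, so that no collapse occurs. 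Combining the spanning statement with linear independence gives the claimed basis. Alternatively, once the four elements are confirmed to form a self-reduced Gröbner basis via Theorem \ref{di} (all compositions reduce to zero — a finite check since only the overlaps among degree-3 leading words of bounded shape occur), Proposition \ref{C(I)proposition} immediately identifies $N(I)$ with the stated set and we are done.
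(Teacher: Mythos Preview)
Your proposal is correct and follows essentially the same route as the paper: identify the ideal generators as $\{a^3,\ b^3,\ b^2a+bab+ab^2-b,\ ba^2+aba+a^2b-a\}$, verify via Theorem~\ref{di} that this self-reduced set is already a Gr\"obner basis (the paper lists the seven overlaps explicitly and reduces each to zero), and then invoke Proposition~\ref{C(I)proposition} to read off $N(I)=\{a^i(ba)^jb^k \mid 0\le i,k\le 2,\ j\ge 0\}$. The paper does not need your alternative spanning-plus-representation argument; the direct composition check suffices, and your hesitation about which overlaps to compute (e.g.\ $ba^3$, $b^2a^2$) is resolved simply by enumerating all pairs whose leading monomials overlap---there are exactly seven, and each reduces to zero using only the four relations.
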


\begin{proof}
We have $U(A^\omega) = F\langle a, b \rangle / I$ where $I$ is generated by $G = \{ G_1, G_2, G_3, G_4 \}$:
  \[
  G_1= b^3,
  \quad
  G_2= b^2a + bab + ab^2 - b,
  \quad
  G_3= ba^2 + aba + a^2b - a,
  \quad
  G_4 = a^3.
  \]
We show that the set $G$ is a Gr\"obner basis of $I$.
There are seven compositions:
  \allowdisplaybreaks
  \begin{align*}
  &S_1 = G_1 a - b G_2,\quad S_2= G_1 ba - b^2 G_2,\quad S_3 = G_1 a^2 - b^2 G_3,\quad S_4= G_2 a^2 - b^2 G_4,
  \\
  &S_5= G_2 a - b G_3,\quad S_6= G_3 a^2 - ba G_4 ,\quad S_7= G_3 a - b G_4.
  \end{align*}
We eliminate from $S_1, \dots, S_7$ all occurrences of the leading monomials of $G_1$, $G_2$, $G_3$, $G_4$;
we write $\equiv$ to indicate congruence modulo $G$:
\allowdisplaybreaks
\begin{align*}
S_1 &= - b^2 ab - ba b^2 + b^2
 \equiv - \left(-bab - ab^2 +b\right) b - bab^2+ b^2
 \equiv 0 ,
\\
S_2 &= - b^2 \left( bab + ab^2 - b\right)
 \equiv -\left(- bab - ab^2 + b \right)b^2 \equiv 0 ,
 \\
 S_3 &=  -b^2aba - b^2a^2b + b^2a = -b^2 a \left( ba +ab -1\right)
 \\
 &\equiv -\left(-bab - ab^2 + b\right) \left( ba +ab -1\right)
\\
&
 \equiv ba\left(-bab - ab^2 + b\right) + babab - bab + a \left(-bab - ab^2 + b\right) b - ab^2\\
  &\quad +bab + ab^2- b - bab  +b
 = -ba^2 b^2 + a \left(-bab - ab^2 + b\right) b
 \\
 &
 \equiv  \left(aba + ab^2 - a\right)b^2 -  a bab^2 + ab^2 \equiv 0,
\\
S_4 &= baba^2 + ab^2a^2 - ba^2
\equiv
  ba\left(-aba - a^2b \right) + a\left(-bab -ab^2 +b\right) a
  \\& \equiv -\left(- aba - a^2b + a\right) ba + a\left(-bab - ab^2 + b\right)a \equiv 0.
\end{align*}
Similar calculations show that $S_5, S_6, S_7 \equiv 0$.
Hence $G$ is a Gr\"obner basis of $I$, and Proposition \ref{C(I)proposition} completes the proof.
\end{proof}

\begin{corollary} \label{relations}
In $U(A^\omega)$, we have the relations
  \[
  a^3 = b^3 = 0, \qquad
  b^2a = - bab - ab^2 + b, \qquad
  ba^2 = -aba - a^2b + a.
  \]
Hence $U(A^\omega)$ is the quotient of $A(-1, -1,1)$ by the ideal generated by $a^3$ and $b^3$.
\end{corollary}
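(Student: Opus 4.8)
The plan is to read the relations off directly from the generating set of the defining ideal exhibited in the proof of Lemma~\ref{smm}, and then to regroup those generators so as to present $U(A^\omega)$ as a quotient of a down-up algebra.

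Recall from the proof of Lemma~\ref{smm} that $U(A^\omega) = F\langle a,b\rangle / I$, where $I$ is generated by $G_1 = b^3$, $G_2 = b^2a + bab + ab^2 - b$, $G_3 = ba^2 + aba + a^2b - a$, and $G_4 = a^3$. Each $G_j$ lies in $I$, hence maps to $0$ under the projection $\pi\colon F\langle a,b\rangle \to U(A^\omega)$. Writing the images of $a$ and $b$ again as $a$ and $b$, the equalities $\pi(G_1) = 0$ and $\pi(G_4) = 0$ are exactly $a^3 = b^3 = 0$, while $\pi(G_2) = 0$ and $\pi(G_3) = 0$ rearrange to $b^2a = -bab - ab^2 + b$ and $ba^2 = -aba - a^2b + a$. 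This proves the displayed relations.

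For the last sentence, compare these with the defining relations of $A(\alpha,\beta,\gamma)$ in Definition~\ref{doup}: for $(\alpha,\beta,\gamma) = (-1,-1,1)$ they are $b^2a = -bab - ab^2 + b$ and $ba^2 = -aba - a^2b + a$, i.e.\ precisely $G_2 = 0$ and $G_3 = 0$. Thus, with $J = \langle G_2, G_3\rangle \subseteq I$, we have $A(-1,-1,1) = F\langle a,b\rangle / J$, and the natural surjection $F\langle a,b\rangle/J \twoheadrightarrow F\langle a,b\rangle/I$ identifies $U(A^\omega)$ with $A(-1,-1,1)/(I/J)$. Since $I$ is generated by $J$ together with $G_1 = b^3$ and $G_4 = a^3$, the ideal $I/J$ of $A(-1,-1,1)$ is the one generated by the images of $a^3$ and $b^3$; hence $U(A^\omega) \cong A(-1,-1,1)/\langle a^3, b^3\rangle$, as claimed.

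There is essentially no obstacle here: the corollary is a formal consequence of the generating set for $I$ found in Lemma~\ref{smm}, and the only point requiring care is that the four elements $G_1,\dots,G_4$ really do generate the whole ideal of Definition~\ref{idealgenerators} (which is a priori generated by all eight $G_{i_1,i_2,i_3}$ with $1\le i_1,i_2,i_3\le 2$) --- but that reduction is exactly what is verified at the beginning of the proof of Lemma~\ref{smm}. If desired, one may additionally remark that the basis produced in Lemma~\ref{smm} shows the images of $a$ and $b$ remain nonzero, so the quotient is nontrivial.
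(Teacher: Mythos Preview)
Your proof is correct and follows exactly the approach the paper intends: the corollary is stated without proof in the paper because the four displayed relations are simply the vanishing of the generators $G_1,\dots,G_4$ from Lemma~\ref{smm}, and the identification with a quotient of $A(-1,-1,1)$ is immediate from Definition~\ref{doup}. Your added remark about why the eight a priori generators $G_{i_1,i_2,i_3}$ reduce to four is a helpful clarification (it holds because the symmetric sum is invariant under all permutations of its arguments), though the paper simply asserts this reduction at the start of the proof of Lemma~\ref{smm} rather than verifying it explicitly.
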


\begin{definition}
Consider the anti-automorphism $\zeta\colon F\langle a, b \rangle \to F\langle a, b \rangle$
defined by $\zeta(a) = b$ and $ \zeta(b) = a$.  Since
$\zeta( G_4 ) = G_1$, $\zeta( G_1 ) = G_4$, $\zeta( G_2 ) = G_3$, $\zeta( G_3 ) = G_2$,
we see that $\zeta$ induces an anti-automorphism on $U(A^\omega)$, also denoted $\zeta$.
\end{definition}

A filtration
$\{ 0 \} \subseteq V^{(0)} \subseteq V^{(1)} \subseteq \cdots \subseteq \bigcup_n V^{(n)} = U(A^\omega)$
is defined by letting $V^{(n)}$ be the subspace with basis consisting of all $a^i (ba)^j b^k$ where
$0 \le i, k \le 2$, $j \ge 0$, and $i+ 2j +k \le n$.
The associated graded algebra is
  \[
  \mathrm{gr}(U(A^\omega)) = \bigoplus_{i \geq 0}\,  \mathcal{G}^i(U(A^\omega) ),
  \qquad
  \mathcal{G}^i (U(A^\omega)) = V^{(i)}/ V^{(i -1)},
  \qquad
  V^{-1} = \{0\}.
  \]

\begin{corollary}\label{dimensionformula}
The dimension of $\mathcal{G}^n ( U(A^\omega)  )$ is $1$ if $n = 0$, $2$ if $n = 1$, $4$ if $n = 2$ or $n \ge 3$ (odd),
and $5$ if $n \ge 4$ (even).
\end{corollary}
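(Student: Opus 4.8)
The plan is to reduce the statement to an elementary counting problem. By Lemma~\ref{smm}, the monomials $a^i (ba)^j b^k$ with $0 \le i, k \le 2$ and $j \ge 0$ form a basis of $U(A^\omega)$; hence, by the very definition of the filtration, the subset of these monomials with $i + 2j + k \le n$ is a basis of $V^{(n)}$. Consequently the images of the monomials with $i + 2j + k = n$ form a basis of the quotient $\mathcal{G}^n(U(A^\omega)) = V^{(n)}/V^{(n-1)}$, and so
\[
  \dim \mathcal{G}^n(U(A^\omega)) = \#\{\, (i,j,k) \in \integer^3 \mid 0 \le i \le 2,\ 0 \le k \le 2,\ j \ge 0,\ i + 2j + k = n \,\}.
\]

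First I would count, for a fixed value of $r = i+k$, the number of pairs $(i,k)$ with $0 \le i, k \le 2$ and $i + k = r$: this is $1, 2, 3, 2, 1$ for $r = 0, 1, 2, 3, 4$ respectively, and $0$ otherwise. Writing $r = n - 2j$, the dimension of $\mathcal{G}^n(U(A^\omega))$ is therefore the sum of these quantities over all $j \ge 0$ with $0 \le n - 2j \le 4$. Since $n - 2j \equiv n \pmod 2$, only values of $r$ having the parity of $n$ occur.

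Then I would split into cases. If $n$ is even, the relevant values are $r \in \{0, 2, 4\}$; the value $r = 0$ occurs (take $j = n/2$) for every even $n \ge 0$, the value $r = 2$ occurs iff $n \ge 2$, and $r = 4$ occurs iff $n \ge 4$. Hence the dimension is $1$ for $n = 0$, $1 + 3 = 4$ for $n = 2$, and $1 + 3 + 1 = 5$ for even $n \ge 4$. If $n$ is odd, the relevant values are $r \in \{1, 3\}$; $r = 1$ occurs for every odd $n \ge 1$ and $r = 3$ occurs iff $n \ge 3$, so the dimension is $2$ for $n = 1$ and $2 + 2 = 4$ for odd $n \ge 3$. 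Assembling the cases gives the stated formula.

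I do not expect a genuine obstacle here: once Lemma~\ref{smm} is in hand the argument is pure bookkeeping, and the only point requiring a little care is that the small cases $n \in \{0, 1, 2, 3\}$ must be treated separately, since there both the upper constraints $i, k \le 2$ and the lower constraint $j \ge 0$ bite. One could alternatively package the count in the generating function $(1 + t + t^2)^2/(1 - t^2)$ and read off the coefficients $1, 2, 4, 4, 5, 4, 5, \dots$, but the case analysis above is just as quick.
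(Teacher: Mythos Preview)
Your proof is correct and takes essentially the same approach as the paper: both reduce to counting triples $(i,j,k)$ with $0 \le i,k \le 2$, $j \ge 0$, and $i+2j+k=n$. The paper simply lists the tuples case by case, while you organize the count by first fixing $r=i+k$ and then summing over admissible $j$; this is a minor repackaging of the same enumeration rather than a different argument.
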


\begin{proof}
For $n = 0$, there is one monomial: $ (i, j, k) = (0, 0, 0)$.
For $n = 1$, there are two: $(1, 0, 0)$, $ (0, 0,1)$.
For $ n = 2$, there are four: $(0, 1, 0)$, $(0, 0, 2)$, $(1, 0,1)$, $(2, 0, 0)$.
For $ n \geq 3$ and odd, four: $(0, \frac{n -1}{2}, 1)$, $(1 , \frac{n -1}{2}, 0)$,
$ (2,  \frac{n -3}{2 }, 1)$, $(1, \frac{n - 3}{2}, 2)$.
For $n \geq 4$ and even, five:  $(0, \frac{n}{2}, 0)$, $(1, \frac{n - 2}{2}, 1)$,
$(2, \frac{n - 2}{2}, 0)$, $(0, \frac{n - 2}{2}, 2)$, $ ( 2, \frac{n -4}{2}, 2)$.
\end{proof}

\begin{corollary}
The Gelfand-Kirillov dimension of $U(A^\omega)$ is $1$.
\end{corollary}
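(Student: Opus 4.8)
The plan is to compute the Gelfand–Kirillov dimension directly from the growth function of the filtration $\{V^{(n)}\}$ introduced above, using the monomial basis of Lemma \ref{smm}. Recall that the GK-dimension of $U(A^\omega)$ equals
\[
\mathrm{GKdim}\,U(A^\omega) = \limsup_{n \to \infty} \frac{\log \dim V^{(n)}}{\log n},
\]
and since the filtration is exhaustive with $\dim V^{(n)} < \infty$ for all $n$, it suffices to estimate $\dim V^{(n)}$.

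First I would assemble $\dim V^{(n)}$ from Corollary \ref{dimensionformula}: since $\dim V^{(n)} = \sum_{i=0}^{n} \dim \mathcal{G}^i(U(A^\omega))$, and the graded pieces have dimension $1, 2, 4, 4, 5, 4, 5, \dots$ (being $4$ in odd degrees $\ge 3$ and $5$ in even degrees $\ge 4$), a one-line summation gives that $\dim V^{(n)}$ grows linearly in $n$; concretely $\dim V^{(n)} = \tfrac{9}{2} n + O(1)$, and in any case there are constants $0 < c_1 < c_2$ with $c_1 n \le \dim V^{(n)} \le c_2 n$ for $n$ large. Plugging this into the formula above, $\log \dim V^{(n)} / \log n \to 1$, so the GK-dimension is exactly $1$.

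The only thing one must be slightly careful about is that GK-dimension is in principle defined via finite-dimensional generating subspaces of the algebra, not via an arbitrarily chosen filtration; so I would note that $V^{(1)}$ (or $V^{(2)}$) is a finite-dimensional generating subspace — it contains $1, a, b$, which generate $U(A^\omega)$ — and that the filtration $V^{(n)}$ is cofinal with the standard filtration $(V^{(1)})^{\le n}$ by powers of $V^{(1)}$, because each generator $a, b$ lies in $V^{(1)}$ and the relations of Corollary \ref{relations} are degree-preserving or degree-lowering in the grading $a, b \mapsto 1$, $ba \mapsto 2$. Hence the growth computed from $\{V^{(n)}\}$ is the genuine GK growth. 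This cofinality check is the only real (and very minor) obstacle; the dimension count itself is immediate from Corollary \ref{dimensionformula}.

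In fact, once linear growth is established, the value $1$ is forced: a finitely generated infinite-dimensional algebra has GK-dimension $\ge 1$, and linear upper growth gives GK-dimension $\le 1$. So the proof reduces to the two sentences: $\dim V^{(n)}$ is bounded above and below by linear functions of $n$ (by Corollary \ref{dimensionformula}), and $\{V^{(n)}\}$ is a standard filtration up to cofinality; therefore $\mathrm{GKdim}\,U(A^\omega) = 1$.
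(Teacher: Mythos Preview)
Your proof is correct and follows essentially the same route as the paper: use Corollary~\ref{dimensionformula} to see that $\dim V^{(n)}$ grows linearly in $n$, and conclude from the limsup formula that the GK-dimension is $1$. The paper's argument is the two-line version of this; your additional cofinality check (that $\{V^{(n)}\}$ agrees with the standard word-length filtration because the Gr\"obner relations never raise degree) fills in a point the paper leaves implicit, but the underlying idea is identical.
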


\begin{proof}
We have
\[
GK\dim U(A^\omega) = \limsup_{n\to \infty} \log_n \dim V^{(n)}
 = \lim_{n \to \infty} \frac{\ln \dim V^{(n)}}{\ln n} = 1,
\]
since Corollary \ref{dimensionformula} implies that $\dim V^{(n)}$ is
a polynomial of degree $1$.
\end{proof}

\begin{corollary}\label{gr}
A $\mathbb{Z}$-grading of $U(A^\omega)$ is given by
  \[
  U(A^\omega) =  U(A^\omega)_{-2} \oplus U(A^\omega)_{-1}\oplus U(A^\omega)_0 \oplus U(A^\omega)_1 \oplus U(A^\omega)_2,
  \]
where
$U(A^\omega)_n = \mathrm{span}\{ \, a^i (ba)^j b^k \mid j \geq 0,\,  0\leq  i,  k\leq 2,\,  i - k = n \, \}$.
\end{corollary}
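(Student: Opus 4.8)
The plan is to exhibit the decomposition as a $\mathbb{Z}$-grading by checking the two defining properties: that the five summands span $U(A^\omega)$, and that multiplication respects the grading, i.e. $U(A^\omega)_m \cdot U(A^\omega)_n \subseteq U(A^\omega)_{m+n}$ with the convention that $U(A^\omega)_k = \{0\}$ for $|k| \ge 3$. The spanning is immediate from Lemma \ref{smm}: every monomial $a^i(ba)^jb^k$ with $0 \le i,k \le 2$ and $j \ge 0$ lies in $U(A^\omega)_{i-k}$, and $i - k$ ranges exactly over $\{-2,-1,0,1,2\}$, so the five subspaces together contain the full monomial basis; that they form a direct sum is clear since distinct monomials go to distinct summands.

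The substance is the multiplicativity. First I would set up a weight function on the free algebra $F\langle a, b\rangle$ by assigning $a$ weight $+1$ and $b$ weight $-1$, so a word $w$ has weight $\deg_a(w) - \deg_b(w)$, and this weight is additive under concatenation. I would then observe that each of the four Gröbner basis generators $G_1 = b^3$, $G_2 = b^2a + bab + ab^2 - b$, $G_3 = ba^2 + aba + a^2b - a$, $G_4 = a^3$ is \emph{homogeneous} for this weight: $G_1$ has weight $-3$, $G_2$ has weight $-1$ (every term, including the linear term $b$, has weight $-1$), $G_3$ has weight $+1$, $G_4$ has weight $+3$. Hence the ideal $I$ is a weight-homogeneous ideal, and the quotient $U(A^\omega) = F\langle a,b\rangle/I$ inherits a $\mathbb{Z}$-grading by weight. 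It then remains only to identify the weight-$n$ component of the quotient with the span described in the statement, which follows because the monomial basis $\{a^i(ba)^jb^k\}$ of Lemma \ref{smm} consists of weight-homogeneous elements and $a^i(ba)^jb^k$ has weight $i - k$ (the $(ba)^j$ factor contributing weight $0$). Since $i - k \in \{-2,-1,0,1,2\}$, only those five weights occur, giving the stated five-term decomposition; multiplicativity $U(A^\omega)_m\cdot U(A^\omega)_n \subseteq U(A^\omega)_{m+n}$ is then automatic from additivity of weight, with the product landing in $\{0\}$ whenever $|m+n| \ge 3$ because no basis monomial has weight outside $\{-2,\dots,2\}$.

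The only real obstacle is the bookkeeping check that every generator $G_i$ is weight-homogeneous, in particular that the inhomogeneous-looking linear correction terms $-b$ in $G_2$ and $-a$ in $G_3$ carry the same weight as the cubic terms they accompany — which they do, since $-b$ has weight $-1$ matching $b^2a, bab, ab^2$, and $-a$ has weight $+1$ matching $ba^2, aba, a^2b$. Once that is verified, everything else is formal: a quotient of a graded algebra by a graded ideal is graded, and one reads off the components from the explicit basis. I would phrase the proof compactly around the weight function and cite Lemma \ref{smm} for both the spanning and the description of the graded pieces.
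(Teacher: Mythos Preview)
Your proof is correct and complete. The paper's own proof consists entirely of the sentence ``Similar to \cite[Proposition 3.5]{Roby}'', so you have in effect written out the standard weight-grading argument that the citation points to: assign weights $+1$ to $a$ and $-1$ to $b$, check that the defining relations are weight-homogeneous, and read off the graded pieces from the monomial basis of Lemma~\ref{smm}.
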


\begin{proof}
Similar to \cite[Proposition 3.5]{Roby}.
\end{proof}

Our next goal is to compute the structure constants of $U(A^\omega)$.

\begin{definition}\label{sm}
For $j,\ell, r, m\geq 0 $, we define the following polynomials:
  \[
  L^m_{j, \ell ,r} = \sum^j_{t = 0}(-1)^{j+t}\binom{j}{t} a^{\ell}(ba)^{j+m-t}b^r.
  \]
\end{definition}

\begin{lemma}\label{SM}
We have
  \[
  -L^{j-1}_{m+1,\ell,r}+ L^{j-1}_{m,\ell,r} = L^j_{m, \ell, r}\,\, ( j> 0),
  \qquad
  -L^{m + 1}_{j, \ell ,r} + L^{m }_{j, \ell ,r} = L^{m}_{j+1,\ell,r}.
  \]
\end{lemma}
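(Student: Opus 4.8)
The plan is to prove both identities by direct manipulation of the defining sum in Definition \ref{sm}, treating the binomial coefficients as the only moving parts. Throughout, the monomials $a^{\ell}(ba)^{s}b^{r}$ for $s \ge 0$ play the role of formal symbols, so each identity reduces to an identity among the integer coefficients attached to a fixed monomial $a^{\ell}(ba)^{N}b^{r}$; it then suffices to match coefficients on both sides.

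For the first identity, I would start from
  \[
  L^{j-1}_{m,\ell,r} = \sum_{t=0}^{j-1} (-1)^{j-1+t}\binom{j-1}{t} a^{\ell}(ba)^{j-1+m-t}b^{r},
  \qquad
  L^{j-1}_{m+1,\ell,r} = \sum_{t=0}^{j-1} (-1)^{j-1+t}\binom{j-1}{t} a^{\ell}(ba)^{j+m-t}b^{r}.
  \]
In the second sum I would reindex by $t \mapsto t-1$ so that both sums run over the monomial $a^{\ell}(ba)^{j+m-t}b^{r}$ with a common exponent parameter, and then combine. The coefficient of $a^{\ell}(ba)^{j+m-t}b^{r}$ in $-L^{j-1}_{m+1,\ell,r}+L^{j-1}_{m,\ell,r}$ becomes, after sign bookkeeping, $(-1)^{j+t}\bigl[\binom{j-1}{t-1}+\binom{j-1}{t}\bigr]$, which by Pascal's rule equals $(-1)^{j+t}\binom{j}{t}$ — exactly the coefficient of that monomial in $L^{j}_{m,\ell,r}$. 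One has to check the boundary terms $t=0$ and $t=j$ separately, but they come out correctly because $\binom{j-1}{-1}=\binom{j-1}{j}=0$.

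For the second identity, the binomial coefficients $\binom{j}{t}$ are the same on both sides and only the exponent of $(ba)$ shifts: $L^{m+1}_{j,\ell,r}$ and $L^{m}_{j,\ell,r}$ differ from $L^{m}_{j+1,\ell,r}$ only through the superscript $m$ and the parameter $j$ appearing in the exponent $j+m-t$ and in the range of summation. Writing out $L^{m}_{j+1,\ell,r}=\sum_{t=0}^{j+1}(-1)^{j+1+t}\binom{j+1}{t}a^{\ell}(ba)^{j+1+m-t}b^{r}$, I would again apply Pascal's rule $\binom{j+1}{t}=\binom{j}{t}+\binom{j}{t-1}$ to split this into two sums; one of them reproduces $L^{m}_{j,\ell,r}$ (after reindexing) and the other reproduces $-L^{m+1}_{j,\ell,r}$, with the signs matching because of the parity shift from $j$ to $j+1$. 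Equivalently, both claimed relations are instances of the single observation that the operator sending the sequence of coefficients to its successive differences acts on $L^{m}_{j,\ell,r}$ by incrementing $j$.

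The main obstacle is purely clerical: keeping the sign $(-1)^{j+t}$ versus $(-1)^{j-1+t}$ and the shifting exponent $j+m-t$ aligned through the reindexing, and correctly handling the two endpoint terms of each sum where a binomial coefficient vanishes. There is no conceptual difficulty — the proof is a short induction-free computation once the monomials are treated as a basis and Pascal's rule is invoked. I would present it by fixing a target monomial, writing the coefficient on each side, and citing $\binom{j-1}{t-1}+\binom{j-1}{t}=\binom{j}{t}$ (resp. $\binom{j}{t-1}+\binom{j}{t}=\binom{j+1}{t}$) to conclude.
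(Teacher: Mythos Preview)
Your approach---matching coefficients monomial by monomial and invoking Pascal's rule---is exactly what the paper does; its entire proof is the sentence ``Use Pascal's formula for binomial coefficients.'' One correction, though: in Definition~\ref{sm} the summation bound and binomial parameter come from the \emph{first subscript}, not the superscript, so for instance
\[
L^{j-1}_{m,\ell,r}=\sum_{t=0}^{m}(-1)^{m+t}\binom{m}{t}\,a^{\ell}(ba)^{m+j-1-t}b^{r},
\]
not a sum to $j-1$ with $\binom{j-1}{t}$ as you wrote. Because the two identities in the lemma are precisely each other under the swap of superscript and first subscript, your misreading just interchanges which computation proves which identity; the Pascal argument you sketched goes through verbatim once the indices are read correctly.
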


\begin{proof}
Use Pascal's formula for binomial coefficients.
\end{proof}

\begin{lemma}\label{a}
If $j, m \ge 0$, then in $U(A^\omega)$ we have
  \begin{align}
  \label{oone}
  (ba)^j \cdot a (ba)^m
  &=
  -\widehat{\delta}_{j,0}\, L^{j-1}_{m,2,1} +  L^m_{j,1,0},
  \\
  \label{ttwo}
  (ba)^j b \cdot (ba)^m
  &=
  -\widehat{\delta}_{m, 0}\, L^{m-1}_{j,1,2} + L^j_{m, 0, 1},
  \\
  \label{tthree}
  (ba)^j b^2 \cdot  a^2
  &=
  -L^0_{j+1,1,1}  + (ba)^{j+1} - (ba)^{j +2} \,{+}\, a^2(ba)^jb^2.
 \end{align}
\end{lemma}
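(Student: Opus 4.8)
The plan is to prove the three identities \eqref{oone}, \eqref{ttwo}, \eqref{tthree} simultaneously by induction on $j$, using the defining relations of Corollary~\ref{relations} together with the recursions of Lemma~\ref{SM}. The guiding idea is that $ba^2$ and $b^2a$ can each be rewritten using the down-up relations $ba^2 = -aba - a^2b + a$ and $b^2a = -bab-ab^2+b$, so that multiplying a word of the form $a^i(ba)^jb^k$ on the right by $a$ or $b$ produces a combination of words of the same shape, and the coefficients are exactly the alternating binomial sums packaged into $L^m_{j,\ell,r}$. First I would establish the base cases. For $j=0$, \eqref{oone} reads $a(ba)^m = L^m_{0,1,0}$, which is immediate from $L^m_{0,1,0} = a(ba)^{m}$ (the sum over $t$ collapses since $\binom{0}{0}=1$). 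Similarly \eqref{ttwo} at $m=0$ is $(ba)^jb = L^j_{0,0,1}$, again immediate.

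Next I would treat the inductive step for \eqref{oone}. Write $(ba)^j\cdot a(ba)^m = (ba)^{j-1}\cdot b\cdot(ba^2)(ba)^{m-1}\cdot\!\cdots$, more precisely peel off one factor: $(ba)^j a(ba)^m = (ba)^{j-1}(ba\cdot a)(ba)^m = (ba)^{j-1}(ba^2)\,b(ba)^{m-1}$ when $m\ge 1$, and $(ba)^{j-1}(ba^2)$ when $m=0$. Substituting $ba^2 = -aba-a^2b+a$ turns this into three terms; the term with $a^2b$ raises the $b$-exponent and after moving things around contributes the $L^{j-1}_{m,2,1}$ piece, while the remaining terms feed the induction hypothesis at $j-1$, and Lemma~\ref{SM} (first recursion) reassembles the coefficients into $L^{j-1}_{m,\ldots}$ and $L^m_{j,\ldots}$. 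Identity \eqref{ttwo} is handled symmetrically: either directly, by the same peeling argument using $b^2a = -bab-ab^2+b$, or more cheaply by applying the anti-automorphism $\zeta$ of the preceding definition, since $\zeta$ swaps $a\leftrightarrow b$ and reverses words, carrying \eqref{oone} for the pair $(j,m)$ to \eqref{ttwo} for $(m,j)$ after checking $\zeta$ sends $L^m_{j,\ell,r}$ to $L^j_{m,r,\ell}$ (up to a sign that one tracks through Definition~\ref{sm}); I would use whichever is shorter and just remark on the other.

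Finally, for \eqref{tthree} I would compute $(ba)^jb^2\cdot a^2$ by first using $(ba)^j b^2 a = (ba)^j b(b^2a) = (ba)^j b(-bab-ab^2+b) = -(ba)^{j+1}ab -(ba)^jbab^2 + (ba)^jb^2$, then multiply the result on the right by $a$ once more and again reduce every occurrence of $ba^2$ and $b^2a$; the terms $(ba)^{j+1}ab\cdot a$ and $(ba)^jbab^2\cdot a$ reduce via \eqref{oone}–\eqref{ttwo} and the relation $a^3=0$ (which kills several summands of the $L$-polynomials, simplifying the bookkeeping), leaving precisely $-L^0_{j+1,1,1} + (ba)^{j+1} - (ba)^{j+2} + a^2(ba)^jb^2$. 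The main obstacle I anticipate is purely combinatorial: keeping the alternating binomial coefficients aligned through the repeated substitutions so that they collapse exactly into the $L^m_{j,\ell,r}$ notation, rather than into some shifted or mismatched sum. This is where Lemma~\ref{SM} does the real work, and the step that requires the most care is verifying that the two ``boundary'' terms produced at each stage — the one with an extra $a^2$ on the left and the one with an extra $b^2$ on the right — are the ones indexed by $\ell=2$ and $r=2$ respectively, with the correct sign $\widehat{\delta}$ prefactor handling the degenerate $j=0$ or $m=0$ cases. Once the base cases and one representative inductive step are written out, the remaining cases are routine and I would indicate them briefly rather than in full.
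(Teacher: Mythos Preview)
Your overall skeleton --- peel one factor of $ba$, substitute the down-up relation, recurse, and assemble coefficients via Lemma~\ref{SM}, then get \eqref{ttwo} from \eqref{oone} via $\zeta$ --- is the same as the paper's. But there is a genuine gap at $j=1$ in \eqref{oone}. When you expand $(ba)\cdot a(ba)^m=(ba^2)(ba)^m=(-aba-a^2b+a)(ba)^m$, the middle term is $-a^2b(ba)^m$, and you assert this ``contributes the $L^{j-1}_{m,2,1}$ piece.'' In fact $L^0_{m,2,1}=\sum_{t=0}^m(-1)^{m+t}\binom{m}{t}a^2(ba)^{m-t}b$, and showing that $a^2b(ba)^m$ equals this alternating sum is not automatic; this is precisely where the combinatorial content sits. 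The paper handles $j=1$ by a separate induction on $m$, after first establishing the auxiliary identity $a^2(ba)^\ell a=0$ for all $\ell\ge0$ (itself a short induction using $a^3=0$). You need some replacement for this step; ``moving things around'' does not cover it. Your attribution is also slightly off for $j\ge2$: there the term $(ba)^{j-1}a^2b(ba)^m$ is simply zero because $(ba)^{j-1}a^2$ contains $a^3$ as a factor, and the $L^{j-1}_{m,2,1}$ piece arises instead from applying Lemma~\ref{SM} to the two surviving terms $-(ba)^{j-1}a(ba)^{m+1}+(ba)^{j-1}a(ba)^m$. Once you see this, your inductive step for $j\ge2$ is actually cleaner than the paper's, which expands to a double sum and invokes a Vandermonde-type identity; but you still owe the $j=1$ case.

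Your opening move for \eqref{tthree} is also miswritten: $(ba)^jb^2a$ equals $(ba)^j(b^2a)$, not $(ba)^jb(b^2a)$ (the latter would be $(ba)^jb^3a=0$). With the correct substitution one gets $(ba)^jb^2a^2=-(ba)^{j+2}-(ba)^jab^2a+(ba)^{j+1}$, and then the term $(ba)^jab^2a$ is reduced using \eqref{oone} and Lemma~\ref{SM}; this is exactly the paper's computation, so once the algebra is fixed your plan for \eqref{tthree} is fine.
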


\begin{proof}
For \eqref{oone}, we use induction on $j$. Clearly the claim is true for $j = 0$. To prove it for
$j = 1$, we use induction on $m$.  For $m = 0$, Lemma {\ref{relations}} implies
  \[
  (ba) a
  =
  ba^2
  =
  - aba - a^2b + a
  =
  -L^{0}_{0,2,1} + L^0_{1, 1, 0}.
  \]
By the inductive hypothesis, we have
\begin{align*}
(ba) a (ba)^m
&=
(ba) a (ba)^{m-1} ba = \left(-L^{0}_{m-1,2,1} + L^{m-1}_{1, 1, 0}\right)ba
\\
&=
(-1)^{m}\sum^{m-1}_{t = 0}(-1)^t \binom{m{-}1}{t}a^2(ba)^{m-t-1} b^2a - a(ba)^{m +1} + a(ba)^m.
\end{align*}
Use the second relation of Lemma \ref{relations}:
\begin{dmath*}
 (ba) a (ba)^m
    =  (-1)^{m}\sum^{m-1}_{t = 0}(-1)^{t+1} \binom{m{-}1}{t}a^2(ba)^{m-t}b
+(-1)^{m}\sum^{m-1}_{t = 0}(-1)^{t+1} \binom{m{-}1}{t}a^2(ba)^{m-t-1}ab^2
+ (-1)^{m}\sum^{m-1}_{t = 0}(-1)^t \binom{m{-}1}{t}a^2(ba)^{m-t-1}b
- a(ba)^{m +1} + a (ba)^m.
\end{dmath*}
Use Pascal's formula in the first sum and change index in the third sum:
  \allowdisplaybreaks
\begin{dmath*}
(ba)a (ba)^m =  (-1)^{m+1}\bigg( a^2(ba)^mb + \sum^{m-1}_{t = 1}(-1)^{t}
\binom{m}{t}a^2(ba)^{m-t}b\bigg) +(-1)^{m} \sum^{m-1}_{t = 1}(-1)^{t}
\binom{m{-}1}{t{-}1}a^2(ba)^{m-t}b
+(-1)^{m}\sum^{m-1}_{t = 0}(-1)^{t+1} \binom{m{-}1}{t}a^2(ba)^{m-t-1}ab^2
+ (-1)^{m}\sum^{m}_{t = 1}(-1)^{t-1} \binom{m{-}1}{t{-}1}a^2(ba)^{m-t}b
- a(ba)^{m +1} + a(ba)^m.
\end{dmath*}
The first $m{-}1$ terms of the second and fourth sums cancel:
  \allowdisplaybreaks
  \begin{dmath*}
  (ba) a (ba)^m =  -L^{0}_{m,2,1}+ (-1)^{m}\sum^{m-1}_{t = 0}(-1)^{t+1} \binom{m{-}1}{t}a^2(ba)^{m-t-1}ab^2 + L^m_{1, 1, 0}.
  \end{dmath*}
To complete the proof for $j = 1$, it suffices to show that the second sum is 0.
But this holds since $a^2 (ba)^\ell a = 0$ for $\ell \geq 0$.
To show this, we use induction on $\ell$; the claim is true for $\ell = 0$ by the first equation of Lemma \ref{relations}. For $\ell = 1$, Lemma \ref{relations} implies
$a^2(ba) a = a^2(-aba-a^2b+a) = 0$. For $\ell \geq 1$, Lemma \ref{relations} and the inductive hypothesis give
$
a^2(ba)^\ell a=
a^2(ba)^{\ell -1} (-aba - a^2b+a ) =0.
$
We now consider the case $j \ge 1$.
Using the inductive hypothesis and Lemma \ref{relations}, we obtain
\allowdisplaybreaks
\begin{align*}
&
(ba)^{j+1} a (ba)^m
=
{ba (ba)^j a (ba)^m = ba\big(- L^{j-1}_{m,2,1} + L^m_{j, 1, 0}\big) }
\\
&=
(-1)^j \sum^{j}_{t = 0}(-1)^t \binom{j}{t} (ba)a(ba)^{j+m-t}
\\
&=
(-1)^{m+1} \sum^j_{t = 0} \sum^{j{+}m{-}t}_{s =
0}(-1)^s\binom{j}{t}\binom{j{+}m{-}t}{s}a^2(ba)^{j+m-t-s}b
\\
&\quad
+(-1)^{j+1}\sum^j_{t = 0}(-1)^t\binom{j}{t}a(ba)^{j+m-t+1}
+(-1)^j\sum^j_{t = 0} (-1)^t\binom{j}{t} a(ba)^{j+m-t}.
\end{align*}
Use Pascal's formula in the second sum and change index in the third sum:
\begin{dmath*}
(ba)^{j+1} a (ba)^m
 =  (-1)^{m+1}\sum^j_{t = 0}\sum^{j+m-t}_{s =
0}(-1)^s\binom{j}{t}\binom{j{+}m{-}t}{s}a^2(ba)^{j+m-t-s}b
+ (-1)^{j+1}\sum^j_{t = 0}(-1)^t\binom{j{+}1}{t}a(ba)^{j+m-t+1}
-(-1)^{j+1}\sum^j_{t=1}(-1)^t\binom{j}{t{-}1}a(ba)^{j+m-t+1}
-(-1)^{j}\sum^j_{t=1}(-1)^t\binom{j}{t{-}1}a(ba)^{j+m-t+1}+a(ba)^m.
\end{dmath*}
The last two sums cancel and the previous expression simplifies to
\[
(-1)^{m+1}\sum^j_{t = 0}\sum^{j+m-t}_{s = 0}(-1)^s\binom{j}{t}\binom{j{+}m{-}t}{s}a^2(ba)^{j+m-t-s}b + L^m_{j +1, 1, 0}.
\]
To complete the proof, it suffices to show that
  \begin{dmath*}
  \sum^j_{t=0}
  \sum^{j+m-t}_{s=0}
  (-1)^s
  \binom{j}{t}
  \binom{j{+}m{-}t}{s}
  a^2(ba)^{j+m-t-s}b
  =
  \sum^m_{t=0}
  (-1)^t
  \binom{m}{t}
  a^2(ba)^{j+m-t}b.
  \end{dmath*}
Let $C_r$ denote the coefficient of $a^2(ba)^{j+m-r}b$ in the left side:
  \allowdisplaybreaks
  \begin{align*}
  C_r &=  \sum^j_{t=0}   \sum^{j+m-t}_{s=0}
  \delta_{s+t,r}
  (-1)^s
  \binom{j}{t}
  \binom{j{+}m{-}t}{s}
   =   \sum^j_{t=0}
  \binom{j}{t}
  (-1)^{r-t}
  \binom{j{+}m{-}t}{r{-}t}
  \\
  & = (-1)^r
  \sum^j_{t=0}
  (-1)^{t}
  \binom{j}{t}
  \binom{j{+}m{-}t}{r{-}t}
  = (-1)^r \binom{m}{r}.
  \end{align*}
For the last equality, see \cite[Example 10.3]{vanLintWilson}. This completes the proof of \eqref{oone}.
The proof of \eqref{ttwo} is obvious by using the anti-automorphism $\zeta$:
  \[
  (ba)^j b (ba)^m = \zeta ( (ba)^m a (ba)^j )
  =
  -\widehat{\delta}_{m,0}\, L^{m-1}_{j, 1, 2} + L^j_{m, 0, 1}.
  \]
For \eqref{tthree}, we use Lemma \ref{relations} and get
\begin{dmath}\label{al}
  {(ba)^j b^2  a^2 = (ba)^j (-bab - ab^2 +b)a
   = -(ba)^{j+2} - (ba)^j ab^2 a + (ba)^{j +1}}.
\end{dmath}
Write $ T = - (ba)^j ab^2 a$. Lemma \ref{relations} implies
\begin{dmath*}
{T = - (ba)^j a(-bab  - ab^2 +b) = (ba)^j a (ba) b + \delta_{j, 0}\, a^2 b^2 - (ba)^j ab.}
\end{dmath*}
Using \eqref{oone} and Lemma \ref{SM} we get
\allowdisplaybreaks
\begin{align*}
T
&=
\big( -\widehat{\delta}_{j,0}\, L^{j-1}_{1,2,1} + L^1_{j, 1, 0}
+ \widehat{ \delta}_{j,0}\, L^{j-1}_{0,2,1}-L^0_{j,1,0} \big) b
+ \delta_{j,0}\, a^2 b^2
\\
&= \widehat{ \delta}_{j,0}\, L^j_{0, 2, 1}\, b - L^{0}_{j+1,1,0}\, b+ \delta_{j, 0}\, a^2 b^2
= a^2 (ba)^j b^2 - L^0_{j+1,1,1}.
\end{align*}
Using $T$ in \eqref{al} completes the proof of \eqref{tthree}.
\end{proof}

\begin{theorem}\label{strc}
The structure constants of $U(A^\omega)$ are
\begin{align}
 a^i (ba)^j b^k \cdot a^\ell(ba)^m b^n
 &=
 a^i(ba)^{j+k+\ell +m} b^n, \; \text{if $(k, \ell) = (0,0)$ \text{or} $(k,\ell)=(1,1)$},\label{AA}
 \\
a^i(ba)^j \cdot a (ba)^mb^n
&=
 -\delta_{i,0} \widehat{ \delta}_{n,2}\widehat{ \delta}_{j,0}\,L^{j-1}_{m,2,n+1} + \widehat{ \delta}_{i,2}L^m_{j, i+1,n},\label{BB}
 \\
a^i(ba)^jb \cdot (ba)^mb^n
&= -\delta_{n,0} \widehat{\delta}_{i,2}\widehat{ \delta}_{m,0}\, L^{m-1}_{j,i+1,2} +\widehat{ \delta}_{n,2} \, L^j_{m, i, n+1},\label{CC}
\\
 a^i(ba)^jb^2 \cdot a(ba)^mb^n
      &=
- \delta_{n,0}\widehat{ \delta}_{i,2}\, L^m_{j, i+1, 2}
 + \widehat{ \delta}_{n,2}\, L^j_{m +1,i, n+1}, \label{DD}
\\
  a^i(ba)^jb \cdot a^2 (ba)^mb^n &= - \delta_{i,0}\widehat{ \delta}_{n,2} L^j_{m, 2,n+1} + \widehat{ \delta}_{i,2} \, L^{m}_{j +1,i + 1, n},    \label{EE}
  \\
 a^i(ba)^j b^2 \cdot (ba)^m b^n &=\delta_{m,0}\delta_{n,0}\, a^i(ba)^j b^2, \label{FF}
 \\
 a^i(ba)^j \cdot a^2 (ba)^mb^n  &= \delta_{i,0}\delta_{j,0}\,  a^2 (ba)^mb^n, \label{GG}
\end{align}
together with
\begin{align}
&
a^i (ba)^j b^2 \cdot a^2 (ba)^m b^n
\notag
\\
&
=
\sum^{j+1}_{k = 0}
(-1)^{k+j} \binom{j+1}{k}
\left[
-\delta_{n,0} \delta_{i,0}\widehat{\delta}_{m,0}\, L^{m-1}_{j-k+1,2,2} + \widehat{ \delta}_{n,2}\widehat{ \delta}_{i,2} \, L^{j - k +1}_{m, i+1, n+1}
\right]
\label{HH}
\\
&\quad
+
a^i(ba)^{j+m+1}b^n - a^i(ba)^{j+m + 2}b^n +\delta_{i,0} \delta_{m, 0}\delta_{n,0} \, a^{2}(ba)^jb^2.
\notag
\end{align}
\end{theorem}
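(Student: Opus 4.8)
The plan is to reduce every product of two basis elements $a^i(ba)^jb^k \cdot a^\ell(ba)^mb^n$ (with $0\le i,k,\ell,n\le 2$) to a linear combination of basis elements, organizing the computation by the pair $(k,\ell)$. There are nine cases for $(k,\ell)\in\{0,1,2\}^2$, and the proof amounts to checking each one using the relations of Corollary \ref{relations} together with the partial-product formulas of Lemma \ref{a}. The key reduction engine is the observation that most cases collapse to one of the three displays \eqref{oone}, \eqref{ttwo}, \eqref{tthree} after stripping off the commuting outer factors $a^i$ on the left and $b^n$ on the right (these pass through freely since $a^i(\cdots)b^n$ never creates a forbidden factor until the inner product is reduced).

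I would proceed in the following order. First I would dispose of the ``trivial'' cases: when $(k,\ell)=(0,0)$ or $(1,1)$ the two middle blocks simply concatenate into a longer power of $ba$, giving \eqref{AA}; this needs only that $(ba)^j(ba)^m=(ba)^{j+m}$ and $b^0=a^0=1$. Next, the cases where one exponent is $2$ and the ``matching'' exponent is $0$: $(k,\ell)=(2,0)$ gives \eqref{FF} because $b^2\cdot(ba)^mb^n$ with the left block ending in $b^2$ meeting a block beginning with $b$ forces, via $b^3=0$ from Corollary \ref{relations} (or rather via $b^2a\cdots$ reductions), collapse unless $m=n=0$; symmetrically $(k,\ell)=(0,2)$ gives \eqref{GG} using $a^3=0$. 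For $(k,\ell)=(1,0)$ I would factor out $a^i$ and $b^n$ and invoke \eqref{oone}, tracking the Kronecker/anti-Kronecker factors $\delta_{i,0}$, $\widehat\delta_{n,2}$, $\widehat\delta_{j,0}$, $\widehat\delta_{i,2}$ that record when the outer factors interact with the $L$-polynomials; this yields \eqref{BB}. Case $(k,\ell)=(0,1)$ is the $\zeta$-image of \eqref{BB} and gives \eqref{CC}. Cases $(k,\ell)=(2,1)$ and $(1,2)$ reduce to \eqref{DD} and \eqref{EE}: here I write $a^i(ba)^jb^2\cdot a(ba)^mb^n = a^i(ba)^j(b^2a)(ba)^mb^n$, apply the relation $b^2a=-bab-ab^2+b$, and then reapply \eqref{oone}/\eqref{ttwo} to the resulting terms, with Lemma \ref{SM} used to combine the $L$-polynomials; \eqref{EE} is again obtained from \eqref{DD} via $\zeta$ (up to the index bookkeeping, since $\zeta$ swaps the roles of $i\leftrightarrow n$ and left/right).

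The genuinely hard case — and the one displayed separately as \eqref{HH} — is $(k,\ell)=(2,2)$, i.e. $a^i(ba)^jb^2\cdot a^2(ba)^mb^n$. Here I would first peel off $a^i$ on the left and $b^n$ on the right, reducing to $(ba)^jb^2a^2(ba)^mb^n$, then apply \eqref{tthree} to rewrite $(ba)^jb^2a^2$ as $-L^0_{j+1,1,1}+(ba)^{j+1}-(ba)^{j+2}+a^2(ba)^jb^2$. Multiplying on the right by $(ba)^mb^n$: the middle two terms are immediate; the term $a^2(ba)^jb^2\cdot(ba)^mb^n$ is handled by the already-established \eqref{FF} (giving the $\delta_{i,0}\delta_{m,0}\delta_{n,0}$ term once $a^i$ is restored, after noting $a^ia^2$ survives only if $i=0$); and the remaining term $-L^0_{j+1,1,1}\cdot(ba)^mb^n$ — a signed sum of terms $a(ba)^rb\cdot(ba)^mb^n$ — must be expanded using \eqref{oone}/\eqref{ttwo}-type identities applied to each summand, and then the double sum collapsed using Lemma \ref{SM} (the first identity there, iterated) into the single parameter $k$ appearing in \eqref{HH}. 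I expect the main obstacle to be exactly this last bookkeeping: controlling the nested binomial sums and verifying that, after all reductions, the coefficients assemble into $(-1)^{k+j}\binom{j+1}{k}$ times the bracketed $L$-expression, with the correct $\delta$/$\widehat\delta$ pattern at the boundary exponents $i,n\in\{0,2\}$. This is a finite but intricate induction/summation argument of the same flavor as the proof of \eqref{oone}, and I would model it closely on that proof, using the Vandermonde-type identity cited from \cite[Example 10.3]{vanLintWilson} to evaluate the resulting coefficient sums.
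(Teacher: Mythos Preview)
Your plan matches the paper's proof almost exactly: the paper also organizes by the pair $(k,\ell)$, disposes of \eqref{AA} by associativity, gets \eqref{BB} and \eqref{CC} from \eqref{oone} and \eqref{ttwo} (you have the $(k,\ell)$ labels for these two swapped, but the substance is right), derives \eqref{DD} by expanding $b^2a=-bab-ab^2+b$ and then invoking the already-proved \eqref{BB}, \eqref{CC} together with Lemma \ref{SM}, obtains \eqref{EE} from \eqref{DD} via $\zeta$, and handles \eqref{FF}, \eqref{GG} from $a^3=b^3=0$.

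For \eqref{HH} you are on the right track---apply \eqref{tthree}, multiply through by $(ba)^mb^n$, treat the four resulting pieces---but you overestimate the difficulty of the last piece. After restoring $a^i$, the term $-a^iL^0_{j+1,1,1}(ba)^mb^n$ is a sum over $k$ of $a^{i+1}(ba)^{j-k+1}b\cdot(ba)^mb^n$, and each summand is literally an instance of the already-established formula \eqref{CC}. Substituting \eqref{CC} termwise gives the bracketed expression in \eqref{HH} immediately; there is no double sum to collapse and no Vandermonde identity is needed here (that identity was used inside the proof of \eqref{oone}, not in the present theorem). So the ``genuinely hard case'' is in fact a one-line application of \eqref{CC} once \eqref{tthree} has done its work.
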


\begin{proof}
For \eqref{AA}, use the associativity of $U(A^\omega)$.
For \eqref{BB} and \eqref{CC} use Lemma {\ref{relations}} and equations \eqref{oone} and \eqref{ttwo} of Lemma \ref{a}.
For \eqref{DD}, Lemma {\ref{relations}} implies
\begin{dmath*}
a^i (ba)^jb^2 a(ba)^mb^n = a^i(ba)^j \left(-bab - ab^2 + b\right)(ba)^m b^n
 = - a^i(ba)^{j+1}b (ba)^m b^n - \delta_{m,0}\delta_{n,0} a^i (ba)^j a b^2+ a^i(ba)^jb(ba)^m b^n.
\end{dmath*}
Using \eqref{BB} and \eqref{CC} and Lemma \ref{SM} we obtain \eqref{DD}:
\begin{dmath*}
a^i (ba)^j  b^2 a(ba)^mb^n
=  \delta_{n,0} \widehat{\delta}_{i,2}\widehat{\delta}_{m, 0} L^{m-1}_{j+1, i+1, 2} -\widehat{\delta}_{n, 2} L_{m,i, n+1}^{j+1}
  - \delta_{m,0} \delta_{n,0} \widehat{\delta}_{i, 2} L^0_{j, i+1,2}
   - \delta_{n, 0}\widehat{\delta}_{i, 2}\widehat{\delta}_{m, 0}L^{m-1}_{j, i+1, 2} +\widehat{\delta}_{n, 2}L^j_{m,i,n+1}
   = -\delta_{n, 0} \widehat{\delta}_{i, 2}\widehat{\delta}_{m, 0}\, L^m_{j, i+1, 2}
+
\widehat{\delta}_{n, 2}\, L^{j}_{m+1,i,n+1}
- \delta_{m,0} \widehat{\delta}_{n, 0} \widehat{\delta}_{i,2}\, L^0_{j, i+1,2}.
\end{dmath*}
For \eqref{EE} use \eqref{DD} and the anti-automorphism $\zeta$.
The proofs of \eqref{FF} and \eqref{GG} are obvious by Lemma \ref{relations}.
For \eqref{HH}, we use \eqref{tthree} of Lemma \ref{a} and obtain
\begin{dmath*}
a^i (ba)^j b^2a^2 (ba)^m b^n=  - a^i L^0_{j+1,1,1}(ba)^m b^n  + a^i (ba)^{j+1}(ba)^m b^n
 -a^i (ba)^{j +2} (ba)^m b^n
 {+}\,a^{i+2}(ba)^jb^2 (ba)^m b^n.
\end{dmath*}
Using Lemma \ref{relations}, we get
\begin{dmath}\label{co}
a^i (ba)^j  b^2a^2 (ba)^m b^n \notag
= \, -\widehat{\delta}_{i, 2} L^0_{j+1,i+1,1}(ba)^m b^n  + a^i(ba)^{j+1+m} b^n
   - a^i(ba)^{j+m + 2} b^n \notag
    +\delta_{i, 0} \delta_{m, 0}\delta_{n, 0}\, a^{2}(ba)^jb^2.
\end{dmath}
Write $ A = L^0_{j+1,i+1,1}(ba)^m b^n$ and use \eqref{CC} to obtain
\begin{dmath*}
A=
\sum^{j+1}_{k = 0}(-1)^{k+j + 1}\binom{j+1}{k}a^{i+1}(ba)^{j-k+1}b(ba)^m b^n
=\sum^{j+1}_{k = 0}(-1)^{k+j+ 1}\binom{j+1}{k}\left[ -\delta_{n,0} \delta_{i,0}\widehat{\delta}_{m, 0}\, L^{m-1}_{j-k+1, 2, 2}+ \widehat{\delta}_{n, 2}\widehat{\delta}_{i, 2}\,L^{j - k +1}_{m, i+1, n+1}\right].
\end{dmath*}
Using $A$ in \eqref{co} completes the proof of \eqref{HH}.
\end{proof}

Our next goal is to describe the center $Z(U(A^\omega))$ of $U(A^\omega)$.

\begin{notation}
We consider the following functions:
  \[
  \begin{array}{ccccc}
  & \gamma_1(m) & \gamma_2(m) & \gamma_3(m) & \gamma_4(m) \\
  \text{$m$ even} & m+1 & -3 & 0 & -m+2 \\
  \text{$m$ odd}  & -(m-3) & -1 & -2 & m
  \end{array}
  \]
\end{notation}

\begin{definition}
We consider the following elements:
  \begin{dmath*}
  \mathcal{Z}(m)
  =
  \widehat{\delta}_{m2}
  \sum^{m -2}_{j = 1}
  (-1)^{j+1} \binom{m{-}1}{j-1} \left(  (ba)^j- a(ba)^{j-1} b\right)+ \gamma_1(m) (ba)^{m-1}
  +
  \gamma_2(m) (ba)^m
  +
  \gamma_3(m) a(ba)^{m-1}b
  +
  \gamma_4(m) a(ba)^{m-2}b
  +
  3 a^2 (ba)^{m-2} b^2.
  \end{dmath*}
\end{definition}

\begin{theorem}
The center of $U(A^\omega)$ is the polynomial algebra in $\mathcal{Z}(m)$, $m \ge 2$:
  \[
  Z(U(A^\omega)) = \field[ \, \mathcal{Z}(m) \mid  m \geq 2 \, ].
  \]
\end{theorem}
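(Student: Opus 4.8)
The plan is to verify two inclusions: that each $\mathcal{Z}(m)$ is central, and that every central element is a polynomial in these. For the first direction, since $U(A^\omega)$ is generated by $a$ and $b$, it suffices to check $a \cdot \mathcal{Z}(m) = \mathcal{Z}(m) \cdot a$ and $b \cdot \mathcal{Z}(m) = \mathcal{Z}(m) \cdot b$ for each $m \ge 2$. Both products can be expanded in the monomial basis $\{a^i(ba)^j b^k \mid 0 \le i,k \le 2,\ j \ge 0\}$ using the structure constants of Theorem \ref{strc}; in fact $\mathcal{Z}(m)$ is supported on monomials with $i = k$, so the relevant computations are the special cases of \eqref{BB}--\eqref{HH} with small exponents, together with \eqref{AA}. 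Rather than computing blindly, I would first observe that $\mathcal{Z}(m) \in U(A^\omega)_0$ in the $\mathbb{Z}$-grading of Corollary \ref{gr}, and that the anti-automorphism $\zeta$ fixes $\mathcal{Z}(m)$ (since $\zeta$ swaps $(ba)^j \leftrightarrow a^{\cdots}$... more precisely $\zeta$ sends $a^i(ba)^jb^k$ to $a^k(ba)^jb^i$ up to the relations, so the symmetric form of $\mathcal{Z}(m)$ under $i \leftrightarrow k$ is what forces $\zeta$-invariance). Then $b\,\mathcal{Z}(m) = \mathcal{Z}(m)\,b$ follows from $a\,\mathcal{Z}(m) = \mathcal{Z}(m)\,a$ by applying $\zeta$, so only one commutation identity need be checked directly.

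The polynomials $\gamma_1,\dots,\gamma_4$ and the binomial sum in the definition of $\mathcal{Z}(m)$ are presumably reverse-engineered precisely so that $[a,\mathcal{Z}(m)] = 0$; I would organize the verification by computing $a\,\mathcal{Z}(m)$ and $\mathcal{Z}(m)\,a$ separately as linear combinations of $L^r_{j,\ell,n}$-type elements (Definition \ref{sm}), using the recurrences of Lemma \ref{SM} to collect terms, and then matching coefficients degree by degree in the filtration of Corollary \ref{dimensionformula}. The parity split in the table for $\gamma_i$ strongly suggests the argument naturally separates into the cases $m$ even and $m$ odd, and one should expect that $\mathcal{Z}(2) = \gamma_2(2)(ba)^2 + \gamma_1(2)(ba) + \cdots$ (the $m=2$ case, where $\widehat{\delta}_{m2}$ kills the sum) serves as the base and the general $m$ follows by the same bookkeeping.

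For the reverse inclusion, I would argue that the centralizer of $a$ inside $U(A^\omega)$, intersected with $U(A^\omega)_0$, is exactly $\field[\mathcal{Z}(m) : m \ge 2]$, and that a central element must lie in $U(A^\omega)_0$ (because conjugation-type arguments, or rather the fact that $[a, U(A^\omega)_n] \subseteq U(A^\omega)_{n+1}$-type grading shifts, force a central element to be homogeneous of degree $0$ — one checks that no nonzero element of $U(A^\omega)_{\pm 1}$ or $U(A^\omega)_{\pm 2}$ can be central by a short direct computation with $a$ or $b$). Within $U(A^\omega)_0$, using the basis from Corollary \ref{gr} one writes a general element of the degree-$\le 2N$ part as a combination of $(ba)^j$, $a(ba)^{j-1}b$, $a^2(ba)^{j-2}b^2$ for $j \le N$, imposes $[a, -] = 0$ using the structure constants, and solves the resulting linear recursion; the solution space in filtration degree $2N$ should have dimension $N$ and be spanned by (monomials in) $\mathcal{Z}(2),\dots,\mathcal{Z}(N+1)$, with $\mathcal{Z}(m)$ having leading term $\gamma_2(m)(ba)^m = \pm(ba)^m$ so that products $\mathcal{Z}(m_1)\cdots\mathcal{Z}(m_k)$ are linearly independent and the algebra is free polynomial on the $\mathcal{Z}(m)$. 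The main obstacle I anticipate is the reverse inclusion's linear-algebra step: controlling the centralizer condition uniformly in the filtration degree, i.e.\ proving the solution space has exactly the predicted dimension and that $\mathcal{Z}(m)$ for $m \ge 3$ are genuinely new (not expressible via lower $\mathcal{Z}$'s and products) — this is where the explicit formulas for $\gamma_i$ and the binomial coefficients must be used in an essential, non-routine way, and where a clean inductive or generating-function argument would need to be found rather than brute-forced.
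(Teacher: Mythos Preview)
Your overall architecture matches the paper's: reduce to $U(A^\omega)_0$ via the $\mathbb{Z}$-grading, use the anti-automorphism $\zeta$ to reduce the two commutation conditions to one, and then solve the linear system coming from $[a,z]=0$ for a generic $z=\sum_{j=0}^{m}\sum_{i=0}^{2}s_{i,j}\,a^i(ba)^jb^i$. The paper does exactly this, but more economically: rather than first verifying that each $\mathcal{Z}(m)$ is central and then separately characterising the centraliser, it simply writes down the linear system $(\mathcal{T})$, shows it has $m-1$ independent solutions for each $m\ge 2$, and exhibits $\mathcal{Z}(m)$ as the ``new'' solution appearing at level $m$. Centrality of $\mathcal{Z}(m)$ is then automatic, and your forward-direction verification is redundant.

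There is, however, a genuine error in your conclusion. You assert that products $\mathcal{Z}(m_1)\cdots\mathcal{Z}(m_k)$ are linearly independent and that the centre is a \emph{free} polynomial algebra on the $\mathcal{Z}(m)$. This is false, and the paper does not claim it: the proof shows that the space of central elements of filtration degree $\le 2m$ is spanned by $1,\mathcal{Z}(2),\dots,\mathcal{Z}(m)$ and hence has dimension exactly $m$. Already at $m=4$ this forces a relation, since $\mathcal{Z}(2)^2$ lies in that span. More conceptually, $U(A^\omega)$ has Gelfand--Kirillov dimension~$1$, so its centre does too, which rules out a free polynomial algebra on more than one generator. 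The notation $\field[\mathcal{Z}(m)\mid m\ge 2]$ in the statement is to be read as ``the subalgebra generated by the $\mathcal{Z}(m)$'', and what is actually proved is the stronger fact that this subalgebra coincides with the $\field$-linear span of $\{1\}\cup\{\mathcal{Z}(m):m\ge 2\}$. So the step you flagged as the main obstacle --- showing that each $\mathcal{Z}(m)$ is ``genuinely new (not expressible via lower $\mathcal{Z}$'s and products)'' --- is both unnecessary and, as stated, aiming at the wrong target.
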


\begin{proof}
By Corollary \ref{gr} we know $Z(U(A^\omega))$ is graded.
Thus if $z$ is central and $z = z_{-2} + z_{-1}+z_{0}+z_{1}+ z_{2}$
is its decomposition into homogenous components, then each $z_i$ is itself central.
We now show that $z\in U(A^\omega)_0$.  First assume
\[
0 \neq z_{-2}= \sum_{j \geq 0} s_j (ba)^j b^2 \in Z(U(A^\omega)), \quad s_j \in \field.
\]
It follows that
\[
0 = z_{-2}\, a - a\, z_{-2}  = \sum_{j \geq 0} s_j (ba)^j b^2a - \sum_{j \geq 0} s_j a (ba)^j b^2.
\]
Using \eqref{DD} of Theorem \ref{strc}, we see that this element is 0:
\[
\sum_{j \geq 0}\sum^j_{t = 0}(-1)^{j + t + 1} \binom{j}{t} s_j a (ba)^{j - t} b^2 -\sum_{j \geq 0}s_j (ba)^{j +1}b +\sum_{j \geq 0} s_j (ba)^jb - \sum_{j \geq 0} s_j a (ba)^j b^2.
\]
Comparing the coefficients on both sides gives $ s_j = 0$ for all $j$.
Now assume \[ 0 \neq z_2 = \sum_{j\geq 0} s_j a^2(ba)^j    \in Z(U(A^\omega)),
\quad s_j \in \field.\]
It follows that
  \[
  0  = b\, z_2 - z_2\, b = \sum_{j\geq 0} s_j b a^2(ba)^j - \sum_{j\geq 0} s_j a^2(ba)^jb.
  \]
Applying the anti-automorphism $\zeta$ to both sides gives
\allowdisplaybreaks \begin{align*}
 &\sum_{j\geq 0} s_j (ba)^j b^2 a = \sum_{j\geq 0} s_ja(ba)^j b^2.
 \end{align*}
Hence $[ \, \sum_{j \geq 0} s_j (ba)^j b^2, \, a \, ] = 0$, contradicting the previous case.
Next assume
\[0\neq z_1 = \sum_{j\geq 0} s_j\, a (ba)^j + \sum_{\ell\geq 0} t_{\ell}\, a^2(ba)^{\ell} b \in Z(U(A^\omega)),
\quad s_j, t_{\ell} \in \field.
\]
It follows that
\[
0  = b z_1 - z_1 b = \sum_{j \geq 0} s_j \, (ba)^{j +1} + \sum_{\ell\geq 0} t_{\ell}\, b a^2(ba)^{\ell} b -  \sum_{j\geq 0} s_j\, a (ba)^j b- \sum_{\ell\geq 0} t_{\ell}\, a^2(ba)^{\ell} b^2.
\]
Using \eqref{EE} of Theorem \ref{strc} gives
\begin{dmath*}
0  = \sum_{j\geq 0} s_j\,  (ba)^{j +1} - \sum_{\ell\geq 0}\sum^{\ell}_{t = 0}(-1)^{\ell +t}\binom{\ell}{t} t_{\ell}\, a^2(ba)^{\ell - t}b^2
-\sum_{\ell\geq 0}  t_{\ell}\, a(ba)^{\ell+1} b + \sum_{\ell \geq 0} t_{\ell}\, a(ba)^{\ell}b
 -  \sum_{j \geq 0} s_j\, a (ba)^j b- \sum_{\ell\geq 0} t_{\ell}\, a^2(ba)^{\ell} b^2.
\end{dmath*}
Comparing the coefficients on both sides gives $ s_j= 0 = t_{\ell}$ for all $j, \ell$. Similarly we can show that $Z(U(A^\omega))\cap U(A^\omega)_{-1} = 0$. Therefore $z\in U(A^\omega)_{0}$.
Now $U(A^\omega)_{0}$ is a commutative subalgebra: any element in $U(A^\omega)_0$ is a linear combination of
$(ba)^j$, $a (ba)^k b$, $a^2(ba)^\ell b^2$ for $j, k, \ell \geq 0$,
and these elements commute by Theorem \ref{strc}.
Using the anti-automorphism $\zeta$, we see that an element in $U(A^\omega)_0$ commutes with $a$ if and only if
it commutes with $b$.
So it suffices to determine the elements that commute with $a$. Without loss of generality, we choose
\[ z = \sum^m_{j = 0}\sum^2_{i=0} s_{i, j}\, a^i (ba)^j b^i \in U(A^\omega)_0, \quad
m\geq 0,\,\, s_{i,j} \in \field.\]
By the relations of Lemma \ref{relations} we have
\allowdisplaybreaks \begin{align}\label{ax}
az = \sum^m_{j = 0} s_{0, j}\, a(ba)^j + \sum^m_{j = 0} s_{1,j}\, a^2(ba)^jb.
\end{align}
On the other hand,
\begin{dmath*}
za = \sum^m_{j = 0} s_{0,j}\,(ba)^ja +\sum^m_{j = 0} s_{1,j}\, a(ba)^{j+1}+\sum^m_{j = 0}s_{2,j}\, a^2(ba)^j b^2a.
\end{dmath*}
Using \eqref{BB} and \eqref{DD} of Theorem \ref{strc} we obtain
\begin{dmath} \label{xaa}
za =  \, s_{0, 0}\, a + \sum^m_{j = 1}s_{0, j}\,\bigg(-a^2(ba)^{j-1}b +\sum^j_{t = 0}(-1)^{j+t} \binom{j}{t} a(ba)^{j-t}\bigg)\notag
+\sum^m_{j = 0} s_{1, j}\, a(ba)^{j+1}- \sum^m_{j = 0}s_{2, j}\, a^2(ba)^{j+1}b + \sum^m_{j=0}s_{2, j}\,a^2(ba)^j b.
\end{dmath}
We write
  \[
  A = \sum^m_{j = 1} \sum^j_{t = 0}(-1)^{j+t} \binom{j}{t} s_{0, j}\, a(ba)^{j-t},
  \qquad
  E=\sum^m_{j = 0} s_{1, j}\, a(ba)^{j+1}.
  \]
We obtain
\begin{align}
\label{AE}
&
A + E = \sum^m_{r = 0} \bigg( \sum^m_{j = 1} (-1)^r \binom{j}{r} s_{0,j} \bigg) a(ba)^r + \sum^{m+1}_{j = 1}
 s_{1,j-1}\,a(ba)^{j}
\\
&=
\sum^m_{j = 1} s_{0, j}\, a +  \sum^m_{r = 1}\bigg( \sum^m_{j = 1} (-1)^r \binom{j}{r}s_{0, j} + s_{1,r-1}\,\bigg)a(ba)^r
+ s_{1, m}\, a(ba)^{m+1}.
\notag
\end{align}
Using \eqref{AE} in \eqref{xaa} gives
\allowdisplaybreaks
\begin{align*}
za
&=
\bigg(s_{0, 0} \,+ \sum^m_{j = 1} s_{0, j}\bigg) a  - \sum^m_{j = 1}s_{0, j}\, a^2(ba)^{j-1}b
\\
&\quad
+  \sum^m_{r = 1}
\bigg( \sum^m_{j = 1} s_{0, j}\,(-1)^r \binom{j}{r} + s_{1,r-1}\,\bigg)a(ba)^r   +  s_{1, m}\, a(ba)^{m+1}
\\
&\quad
- \sum^m_{j = 0}s_{2, j}\, a^2(ba)^{j+1}b + \sum^m_{j=0}s_{2, j}\, a^2(ba)^j b.
\end{align*}
Changing index in the second and fourth sums and combining coefficients gives
\begin{align*}
za
&=
\Big(s_{0, 0}+ \sum^m_{j = 1} s_{0, j}\Big) a + \Big( {-}s_{0, 1} {+} s_{2, 0}\Big)a^2b
+
\sum^{m-1}_{j = 1} \Big( {-}s_{0, j+1} {-} s_{2, j-1} {+} s_{2, j} \Big) a^2(ba)^{j}b
\notag
\\
&\quad
 + \sum^m_{r = 1}\Big( \sum^m_{j = 1} s_{0, j}(-1)^r \binom{j}{r} + s_{1, r-1}\Big)a(ba)^r
+  s_{1, m}\, a(ba)^{m+1}
\\
&\quad
+\left(- s_{2, m-1} + s_{2, m}\right) a^2(ba)^m b - s_{2, m} \, a^2(ba)^{m +1}b.
\notag
\end{align*}
Comparing the coefficients in this expression with \eqref{ax}, we get this linear system:
\allowdisplaybreaks \begin{align}\label{*}
\begin{cases}
&\sum^m_{j = 1} s_{0, j} = 0,\\& -s_{0, 1} + s_{2, 0} - s_{1, 0} = 0,\\ &\notag
-s_{0, j+1} - s_{2, j-1}+s_{2, j}- s_{1, j} = 0\quad (1\leq j \leq m-1), \\&\tag{$\mathcal{T}$}
(-1)^r \sum^m_{j = 1}  \binom{j}{r}s_{0, j} + s_{1, r-1} - s_{0, r} = 0\quad (1\leq r  \leq m),\notag
\\
& s_{2, m} = s_{1, m}= s_{2, m-1} = 0.\notag\end{cases}
\end{align}
For $m < 2$, the only solution is trivial.
For $m \geq 2$, a calculation (details omitted) shows that \eqref{*} has $m{-}1$ linearly independent solutions.
For each $m$, we have the following solution:
\allowdisplaybreaks \begin{alignat*}{2}
\label{**}
s_{2,m-2} &= 1, \quad s_{2,j} = 0 \; (j\neq m-2),
&\quad
s_{0, j} &=  \frac{1}{3}(-1)^{j+1} \binom{m{-}1}{j{-}1} \; (1  \leq j \leq m-2),
\\
s_{0, m-1}
&=
\begin{cases}
(m{+}1)/3 &\text{if $m$ is even} \\
(3{-}m)/3  & \text{if $m$ is odd}
\end{cases},
&\quad
s_{0,m} &=
\begin{cases} -1 &\text{if $m$ is even} \\
-1/3 & \text{if $m$ is odd}
\end{cases},
\\
s_{1,i-1} &= -s_{0,i} \; (1\leq i\leq  m-2),
&\quad
s_{1, m-2} &= -s_{0, m-1}+1,
\\
s_{1, m-1}
&=
\begin{cases}
0  & \text{if  $m$ is even} \\
-2/3 & \text{if  $m$ is odd}
\end{cases}, &\quad s_{1,m} &=0.
\end{alignat*}
Using this solution for $z$, and observing that any solution for $m{-}1$ is also a solution for $m$,
we obtain a complete list of linearly independent solutions for \eqref{*}.
\end{proof}

\subsection{The alternating sum}

The structure constants for $A^\omega$ are $0$,
the set of ideal generators is empty, and hence
$U(A^\omega)$ is the free associative algebra on $a$ and $b$.
The Gelfand-Kirillov dimension of $U(A^\omega)$ is $\infty$.

\subsection{The cyclic sum }

The results are identical to those for the symmetric sum, since the structure constants are
  \[
  [e_2, e_1, e_1] = e_1,
  \qquad
  [e_1, e_2, e_2] = e_2,
  \qquad
  [e_1, e_1,e_1] =[e_2, e_2, e_2]= 0.
  \]

\subsection{The Lie family, $q = \infty$} \label{Lie}

The structure constants for $A^{\omega^{\tiny{\infty}}_{\tiny{L}}}$ are determined by
  \[
  [e_1, e_2, e_1 ] = 2e_1,
  \qquad
  [e_2, e_2, e_1] = -2 e_2.
  \]

\begin{lemma}\label{do}
The universal associative envelope $U(A^{\omega^{\tiny{\infty}}_{\tiny{L}}})$ is isomorphic to
the down-up algebra $A(2, -1, -2)$.
\end{lemma}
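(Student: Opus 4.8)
The plan is to bypass any Gr\"obner basis computation and instead identify the defining ideal of $U(A^\omega)$ directly with the defining ideal of $A(2,-1,-2)$; here $\omega$ denotes the Lie family operation $q = \infty$, i.e.\ $\omega(x,y,z) = [x,[y,z]] = xyz - xzy - yzx + zyx$ in the permutation basis, which is antisymmetric in its last two arguments.

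First I would tabulate $\omega(e_i, e_j, e_k)$ for $1 \le i,j,k \le 2$. Using the antisymmetry in the last two slots together with the stated structure constants $\omega(e_1,e_2,e_1) = 2e_1$ and $\omega(e_2,e_2,e_1) = -2e_2$, all eight values are determined: the nonzero ones are $\omega(e_1,e_1,e_2) = -2e_1$, $\omega(e_1,e_2,e_1) = 2e_1$, $\omega(e_2,e_1,e_2) = 2e_2$, $\omega(e_2,e_2,e_1) = -2e_2$, and the remaining four vanish. Then I would write down the eight ideal generators $G_{ijk} = \omega(x_i, x_j, x_k) - \phi(\omega(e_i,e_j,e_k))$ of Definition \ref{idealgenerators} with $x_1 = a$, $x_2 = b$. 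Here $G_{111} = G_{122} = G_{211} = G_{222} = 0$ identically; the antisymmetry of $\omega$ in its last two arguments forces $G_{112} = -G_{121}$ and $G_{212} = -G_{221}$; and a one-line expansion gives
\[
G_{121} = -\bigl( ba^2 - 2aba + a^2 b + 2a \bigr), \qquad G_{221} = b^2 a - 2bab + ab^2 + 2b .
\]

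Consequently the ideal $I$ generated by all the $G_{ijk}$ equals $\langle G_{121}, G_{221} \rangle$, and these two elements are exactly (up to an overall sign) the defining relations of the down-up algebra $A(\alpha,\beta,\gamma)$ for $(\alpha,\beta,\gamma) = (2,-1,-2)$, namely $ba^2 = \alpha aba + \beta a^2 b + \gamma a$ and $b^2 a = \alpha bab + \beta ab^2 + \gamma b$. Hence $U(A^\omega) = F\langle a, b\rangle / I$ is precisely the algebra defined by this presentation, so the map sending the generators $a, b$ of $A(2,-1,-2)$ to $a, b$ is the desired isomorphism. I do not expect any serious obstacle: the only care required is to confirm that the complete list of eight generators contributes nothing beyond $G_{121}$ and $G_{221}$ (so that $I$ is not strictly larger), and to match the signs and the roles of $a$ and $b$ with the Benkart--Roby convention for $A(\alpha,\beta,\gamma)$. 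One may optionally invoke Theorem \ref{bas1} to note that $A(2,-1,-2) \neq 0$, confirming that the presentation does not degenerate, although this is not needed for the isomorphism statement itself.
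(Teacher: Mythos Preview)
Your proposal is correct and follows essentially the same approach as the paper: both identify the ideal $I$ defining $U(A^\omega)$ with the ideal generated by the two down-up relations for $A(2,-1,-2)$. Your write-up is in fact more explicit than the paper's, which simply asserts that $I$ is generated by $b^2a - 2bab + ab^2 + 2b$ and $ba^2 - 2aba + a^2b + 2a$ without displaying the reduction from the eight $G_{ijk}$; the paper also remarks that these two elements form a Gr\"obner basis, but as you note this is not needed for the isomorphism statement itself.
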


\begin{proof}
We have $U\big(A^{\omega^{\tiny{\infty}}_{\tiny{L}}}\big) = F\langle  a, b \rangle / I$,
where $I$ is the ideal generated by these two elements, which form a Gr\"obner basis:
$b^2a -2 bab + ab^2 +2 b$, $ba^2 -2 aba + a^2b +2 a$.
\end{proof}

\begin{remark}\label{sl}
If we replace ${\omega^{\tiny{\infty}}_{\tiny{L}}}$ by $\omega^{\prime} = [[-, - ], -]$ then
we get the 2-dimensional simple Lie triple system  $A^{\omega^{\prime}}$ with relations
$[e_1, e_2, e_1] = 2e_1$, $[e_1, e_2, e_2] = -2 e_2$;
the results for $U(A^{\omega^{\prime}})$ are identical to those for ${\omega^{\tiny{\infty}}_{\tiny{L}}}$.
\end{remark}

Benkart and Roby \cite{Roby} showed that the down-up algebra $A(2, -1, -2)$
is isomorphic to the universal associative envelope $U(\sltwo)$ of the simple Lie algebra of
$2 \times 2$ matrices of trace $0$ with basis $\{h, e, f \}$ and relations
$[e, f] = h$, $[h, e] = 2e$, and $[h, f]= -2f$.
In $U(\sltwo)$ we have
$ef - fe = h$, $he - eh= 2e$, $hf-fh= -2f$.

\begin{lemma} \label{helemma1}
If $\ell, k, m, j \ge 0$ then in  $U(\sltwo)$ we have
 \begin{align}
  e^\ell \cdot h^k
   &=
   \sum_{q = 0}^k
   (-1)^q 2^q \binom{k}{q} \ell^q
  h^{k-q} e^\ell,
  \label{one}
  \\
   h^k \cdot f^m
   &=
  \sum_{q=0}^k
  (-1)^q 2^q \binom{k}{q} m^q
   f^m h^{k-q},
   \label{two}
  \\
  e^\ell \cdot f^j
  &=
   \ell! j!\sum_{r=0}^{\min(j,\,\ell)}
  \frac{f^{j-r}}{(j - r)!}\binom{h{-}j{-}\ell{+}2r}{r}
  \frac{e^{\ell-r}}{(\ell -r)!}. \label{three}
  \end{align}
 \end{lemma}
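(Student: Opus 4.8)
The plan is to reduce \eqref{one}--\eqref{three} to a few elementary commutation facts in $U(\sltwo)$ together with two one-variable inductions. The key preliminary is the pair of \emph{shift relations}: from $he-eh=2e$ we get $e\,h=(h-2)\,e$, hence by induction on $\deg p$ that $e\,p(h)=p(h-2)\,e$ for every polynomial $p$; symmetrically $hf-fh=-2f$ gives $f\,h=(h+2)\,f$ and $f\,p(h)=p(h+2)\,f$.

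For \eqref{one} and \eqref{two}: induction on $\ell$ using the shift relation gives $e^\ell h^k=(h-2\ell)^k e^\ell$, the step being $e^{\ell+1}h^k=e\big((h-2\ell)^k e^\ell\big)=(h-2(\ell+1))^k e^{\ell+1}$. Since $h$ commutes with scalars, $(h-2\ell)^k=\sum_{q=0}^k(-1)^q2^q\binom kq\ell^q h^{k-q}$, which is \eqref{one}; the mirror computation $h^k f^m=f^m(h-2m)^k$ yields \eqref{two}.

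For \eqref{three}: first establish the auxiliary identity $ef^j=f^je+j\,f^{j-1}(h-j+1)$ by induction on $j$, the step using $ef=fe+h$ and the shift relation $(h-j+1)f=f(h-j-1)$. Then prove \eqref{three} by induction on $\ell$. For $\ell=0$ only the $r=0$ term survives and the claim reads $f^j=f^j$. For the step, multiply the right side of \eqref{three} for $\ell$ on the left by $e$ and move $e$ rightward through each summand: by the auxiliary identity $e\cdot\frac{f^{j-r}}{(j-r)!}=\frac{f^{j-r}}{(j-r)!}e+\frac{f^{j-r-1}}{(j-r-1)!}(h-j+r+1)$, and by the shift relation $e\cdot\binom{h-j-\ell+2r}{r}=\binom{h-j-\ell+2r-2}{r}e$. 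Collecting the coefficient of $\frac{f^{j-s}}{(j-s)!}(\cdots)\frac{e^{\ell+1-s}}{(\ell+1-s)!}$, to which the first kind of term contributes at $r=s$ and the second at $r=s-1$, reduces the step, after setting $H=h-j-\ell+2s-2$, to the polynomial identity
\[
(\ell+1-s)\binom Hs+(H+\ell-s+2)\binom H{s-1}=(\ell+1)\binom{H+1}s .
\]
Via Pascal's rule $\binom{H+1}s=\binom Hs+\binom H{s-1}$ this collapses to $s\binom Hs=(H-s+1)\binom H{s-1}$, immediate from the definition of $\binom Hs$. Summands with indices outside $0\le r\le\min(\ell,j)$ vanish under the convention $1/n!=0$ for $n<0$, which handles the boundary.

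The main obstacle is the bookkeeping in this last step: one must track simultaneously the three effects of pushing $e$ to the right --- the genuine commutator term produced by $e$ past $f^{j-r}$, the argument shift $-2$ produced by $e$ past the $h$-polynomial, and the reindexing $r\mapsto s$ --- so that the accumulated coefficients telescope to Pascal's rule. Everything else is a routine induction.
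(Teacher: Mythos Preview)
Your proof is correct and in fact more self-contained than the paper's. For \eqref{one} the paper performs a double induction---outer on $k$, with the base case $k=1$ handled by a separate induction on $\ell$, and the inductive step in $k$ closed via Pascal's rule on $\binom{k}{q}+\binom{k}{q-1}$---whereas you observe that the shift relation $e\,p(h)=p(h-2)\,e$ yields $e^\ell h^k=(h-2\ell)^k e^\ell$ in closed form, after which a single binomial expansion gives \eqref{one}; your route is shorter and conceptually cleaner, and the same remark applies to \eqref{two}. For \eqref{three} the paper does not give an argument at all but cites Humphreys \cite[Lemma~26.2]{Humphreys}, while you supply a complete induction on $\ell$, reducing the inductive step to the polynomial identity $(\ell{+}1{-}s)\binom{H}{s}+(H{+}\ell{-}s{+}2)\binom{H}{s-1}=(\ell{+}1)\binom{H+1}{s}$, which collapses via Pascal's rule to $s\binom{H}{s}=(H{-}s{+}1)\binom{H}{s-1}$. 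Your extra work buys independence from the external reference; the paper's approach buys brevity.
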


\begin{proof}
For \eqref{one}, we use induction on $k$. The claim is clear for $k= 0$. To prove the claim for $k =1$, we use induction on $\ell$.
For $\ell = 1$, the claim holds since $eh  = he- 2e$.
Assume that $\ell \ge 1$. By the inductive hypothesis we have
    \allowdisplaybreaks \begin{align*}
   e^{\ell+1} h
  =
  e e^\ell h
  =
  \big(
  e h e^\ell - 2\ell e^{\ell +1}
  \big)
 =
    h e^{\ell +1} -2e^{\ell+1} - 2\ell e^{\ell+1},
    \end{align*}
so the claim is true for $k = 1$. For $k \ge 1$, the inductive hypothesis implies
    \allowdisplaybreaks \begin{align*}
  &e^\ell h^{k+1} =
  \sum_{q = 0}^k
  (-1)^q 2^q \binom{k}{q} \ell^q
  h^{k-q} e^\ell h
  =
  \sum_{q = 0}^k (-1)^q
  2^q \binom{k}{q} \ell^q
    h^{k - q} \big ( he^{\ell} - 2\ell e^\ell\big)
    \\
  &=
  \sum_{q=0}^k (-1)^q
  2^q \binom{k}{q} \ell^q
    h^{k-q+1}e^{\ell}
  +
  \sum_{q=1}^{k +1} (-1)^{q}
  2^{q} \binom{k}{q-1} \ell^{q}h^{k-q+1}e^{\ell}
  \\
  &=
  h^{k+1}e^{\ell}
  +
  \sum_{q=1}^k (-1)^q
  2^q
  \left[
  \binom{k}{q}
  {+}
  \binom{k}{q{-}1}
  \right]
  \ell^q
    h^{k+1-q} e^{\ell}
  + (-1)^{k +1}
  2^{k+1} \ell^{k+1}
  e^\ell.
  \end{align*}
Using Pascal's formula for binomial coefficients we obtain
  \allowdisplaybreaks \begin{align*}
  e^\ell h^{k+1}&=
 h^{k+1}  e^\ell
  +
 \sum_{q=1}^k (-1)^q
 2^q
 \binom{k{+}1}{q}
  \ell^q
   h^{k+1-q}  e^\ell
  +(-1)^{k + 1}
  2^{k+1} \ell^{k+1}
  e^\ell.
  \end{align*}
This proves \eqref{one}, and \eqref{two} is similar; for \eqref{three}, see Humphreys \cite[Lemma 26.2]{Humphreys}.
\end{proof}

\begin{theorem}\label{th}
The structure constants of $U(\sltwo)$ are
\begin{dmath*}
 ( f^i h^j  e^k )\cdot ( f^\ell h^m e^n )
 = \, k!\, \ell!   \sum_{r = 0}^{\min(\ell,\,k)}\!\!
  \sum_{q = 0}^j\sum^m_{i = 0} (-1)^{q + i} 2^{q+i} \binom{j}{q}\binom{m}{i}\frac{(\ell -r)^q}{(\ell - r)!}\frac{(k - r)^i}{(k - r)!}\times
    {f^{\ell-r+i}}
  h^{j-q}\binom{h - k -\ell + 2r}{r}
    h^{m-i} e^{k - r + n}.
\end{dmath*}
\end{theorem}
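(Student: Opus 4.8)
The plan is to compute the product straightforwardly inside $U(\sltwo)$, using the three rewriting rules of Lemma \ref{helemma1} to bring the word $f^ih^je^kf^\ell h^me^n$ into the normal form spanned by the PBW monomials $f^\alpha h^\beta e^\gamma$. The key observation is that the only place where an $e$ stands to the left of an $f$ is the middle block $e^kf^\ell$; once that block is rewritten, every remaining step is just commuting a power of $e$ or of $f$ past a power of $h$, and those moves are handled verbatim by \eqref{one} and \eqref{two}. The polynomial $\binom{h-k-\ell+2r}{r}$ that \eqref{three} produces is a polynomial in $h$ alone, hence central in the subalgebra generated by $h$, so it never has to be moved.

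Concretely I would proceed as follows. \textbf{Step 1.} Apply \eqref{three} to the middle block:
\[
e^kf^\ell=k!\,\ell!\sum_{r=0}^{\min(\ell,k)}\frac{f^{\ell-r}}{(\ell-r)!}\binom{h-k-\ell+2r}{r}\frac{e^{k-r}}{(k-r)!},
\]
so that, after extracting the scalars, the product equals
\[
k!\,\ell!\sum_{r}\frac{1}{(\ell-r)!\,(k-r)!}\;f^i h^j f^{\ell-r}\binom{h-k-\ell+2r}{r}e^{k-r}h^m e^n.
\]
\textbf{Step 2.} Move $h^j$ to the right past $f^{\ell-r}$ by \eqref{two} with $(k,m)\mapsto(j,\ell-r)$: this introduces the factor $(-1)^q2^q\binom{j}{q}(\ell-r)^q$ under a sum over $0\le q\le j$ and replaces $h^jf^{\ell-r}$ by $\sum_q(\cdots)f^{\ell-r}h^{j-q}$, after which $f^if^{\ell-r}=f^{i+\ell-r}$. \textbf{Step 3.} Move $e^{k-r}$ to the right past $h^m$ by \eqref{one} with $(\ell,k)\mapsto(k-r,m)$: this introduces $(-1)^{i}2^{i}\binom{m}{i}(k-r)^{i}$ under a sum over $0\le i\le m$ and replaces $e^{k-r}h^m$ by $\sum_i(\cdots)h^{m-i}e^{k-r}$, after which $e^{k-r}e^n=e^{k-r+n}$. \textbf{Step 4.} Since $\binom{h-k-\ell+2r}{r}$ is a polynomial in $h$, it commutes with the powers $h^{j-q}$ and $h^{m-i}$ that now surround it, so nothing more needs to be relocated; collecting the scalar factors ($k!\,\ell!$ and $\tfrac1{(\ell-r)!(k-r)!}$ from Step 1, together with the two sign-and-binomial factors from Steps 2 and 3) and using Fubini to interchange the three finite sums yields precisely the claimed identity.

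The proof carries no conceptual difficulty — it is a bookkeeping consequence of Lemma \ref{helemma1} and the associativity of $U(\sltwo)$ — so the only point that really needs care is the threading of the $h$-polynomial $\binom{h-k-\ell+2r}{r}$ through the successive reorderings: one must verify that the moves of Steps 2 and 3 leave it untouched (which holds because they only alter powers of $h$ sitting to its left or right, not the polynomial itself), and that the exponents $\ell-r$ and $k-r$ fed into \eqref{one} and \eqref{two} are exactly those produced by \eqref{three}. It is also worth recording that all three sums are finite, namely $0\le r\le\min(\ell,k)$, $0\le q\le j$ and $0\le i\le m$, so the interchange of summation is unproblematic and the whole computation stays inside $U(\sltwo)$.
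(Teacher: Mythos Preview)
Your proof is correct and follows exactly the approach the paper intends: the paper states Theorem~\ref{th} immediately after Lemma~\ref{helemma1} with no proof at all, leaving it as the evident three-step consequence of formulas \eqref{one}, \eqref{two}, \eqref{three} that you have spelled out. Your Steps~1--3 (apply \eqref{three} to $e^kf^\ell$, then \eqref{two} to $h^jf^{\ell-r}$, then \eqref{one} to $e^{k-r}h^m$) are precisely the computation the paper suppresses, and your observation that the central $h$-polynomial $\binom{h-k-\ell+2r}{r}$ is untouched by the reorderings on either side of it is the only point that genuinely needs checking.

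One remark worth making explicit for the reader: the theorem as printed overloads the letter $i$, using it both for the exponent in the left factor $f^i$ and for the inner summation index running from $0$ to $m$; your write-up implicitly resolves this by context, and the exponent $\ell-r+i$ on $f$ in the final formula refers to the outer $i$.
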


\begin{remark}
Using Theorem \ref{th} and the homomorphism $\psi\colon \sltwo \to  A(2, -1, 1)$ from \cite{Roby},
we obtain the structure constants of $U\big(A^{\omega^{\tiny{\infty}}_{\tiny{L}}}\big)$
with respect to the basis $\mathfrak{B}_2$ with $(c_1, c_2)= (-1,0)$ (see Lemma \ref{zhaolemma}).
\end{remark}

\subsection{The Lie family, $q = \frac12$ }

The structure constants for $A^{\omega^{\tiny{1/2}}_{\tiny{L}}}$ are zero.

\begin{lemma}\label{lj}
The universal associative envelope $U(A^{\omega^{\tiny{1/2}}_{\tiny{L}}})$ is isomorphic to the down-up algebra $A(0, 1, 0)$.
\end{lemma}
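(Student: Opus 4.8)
The strategy, parallel to that of Lemma~\ref{do}, is to exhibit $U(A^{\omega^{\tiny{1/2}}_{\tiny{L}}})$ and $A(0,1,0)$ as the quotient of $F\langle a,b\rangle$ by \emph{the same} ideal, so that the identity map of $F\langle a,b\rangle$ induces the asserted isomorphism.

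First I would compute the ideal generators of Definition~\ref{idealgenerators}. Because every structure constant of $A^{\omega^{\tiny{1/2}}_{\tiny{L}}}$ vanishes, each $G_{i_1,i_2,i_3}$ reduces to $\omega(x_{i_1},x_{i_2},x_{i_3})$; and for the operation $\omega(u,v,w)=[u,\,v\circ w]$ one has $\omega(u,v,w)=u(v\circ w)-(v\circ w)u=\tfrac12\bigl(uvw+uwv-vwu-wvu\bigr)$. Running over the eight triples with entries in $X=\{a,b\}$ gives $G_{1,1,1}=G_{2,2,2}=0$, while the other six generators are, up to a nonzero rational scalar, either $a^2b-ba^2$ (three of them) or $ab^2-b^2a$ (three of them). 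Hence the defining ideal of $U(A^{\omega^{\tiny{1/2}}_{\tiny{L}}})$ is $I=\langle\, b^2a-ab^2,\ ba^2-a^2b\,\rangle$.

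Next I would read off the presentation of $A(0,1,0)$ from Definition~\ref{doup}: substituting $(\alpha,\beta,\gamma)=(0,1,0)$ turns the down-up relations into $b^2a=ab^2$ and $ba^2=a^2b$, so $A(0,1,0)=F\langle a,b\rangle/\langle\, b^2a-ab^2,\ ba^2-a^2b\,\rangle$. This is exactly the quotient defining $U(A^{\omega^{\tiny{1/2}}_{\tiny{L}}})$, so $a\mapsto a$, $b\mapsto b$ gives the isomorphism. As in Lemma~\ref{do} I would also note that $\{\, b^2a-ab^2,\ ba^2-a^2b\,\}$ is a self-reduced Gr\"obner basis of $I$: in deglex with $a<b$ the leading monomials are $b^2a$ and $ba^2$, neither a factor of the other, and the single composition, arising from the overlap $b^2a\cdot a=b\cdot ba^2$, reduces modulo $G$ to $-a^2b^2+a^2b^2=0$, so Theorem~\ref{di} applies; then Theorem~\ref{bas1} supplies the monomial basis $\{a^i(ba)^jb^k\}$ and the value $GK\dim U(A^{\omega^{\tiny{1/2}}_{\tiny{L}}})=3$ recorded in Table~\ref{summary}. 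I expect no genuine difficulty here; the one place to be careful is the bookkeeping of the eight generators and the sign conventions in $\omega$, so as to be sure the six nonzero generators collapse to precisely the two down-up relations and introduce no extra relation.
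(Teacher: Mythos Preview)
Your proposal is correct and follows essentially the same approach as the paper: identify the generating set of $I$, observe it coincides with the down-up relations for $(\alpha,\beta,\gamma)=(0,1,0)$, and note that the two elements form a Gr\"obner basis. The paper's proof is a single sentence stating these facts without justification; you have supplied the explicit computation of the eight generators $G_{i_1,i_2,i_3}$ and the verification of the overlap reduction, which the paper omits.
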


\begin{proof}
We have $U(A^{\omega^{\tiny{1/2}}_{\tiny{L}}}) = F\langle a, b\rangle / I$,
where $I$ is the ideal generated by these two elements, which form a Gr\"obner basis:
$b^2a - ab^2$, $ba^2 - a^2b$.
\end{proof}

\begin{remark}
If we replace ${\omega^{\tiny{1/2}}_{\tiny{L}}}$
by ${\omega^{\prime\prime}} = [- \circ-, -]$ then we get an anti-Lie triple system $A^{\omega^{\prime\prime}}$, and the results for $U(A^{\omega^{\prime\prime}})$ are the same as those for $A^{\omega^{\tiny{1/2}}_{\tiny{L}}}$.
\end{remark}

\begin{lemma}\label{rr}
If $i, j \ge 0$, then in $U(A^{\omega^{\tiny{1/2}}_{\tiny{L}}})$ we have
  \begin{align}
   b^i\cdot a^j
   &=\begin{cases}  a^{j-1} (ba) b^{i-1}   &\text{if $i, j$ are both odd,}
      \\
       a^jb^i   &\text{otherwise.} \label{ba}
       \end{cases}
  \\
  (ba)^j \cdot a^i
  &=
  \begin{cases}  a^{i+1} (ba)^{j-1} b   &\text{if $i$  is odd,  $j \neq 0$,}
  \\
   a^i(ba)^j   &\text{otherwise.}  \label{baa}
        \end{cases}
  \\
  b^i \cdot (ba)^j
  &=
  \begin{cases}  a (ba)^{j-1} b^{i+1}   &\text{if $i$ is odd, $j \neq 0$, }
  \\
   (ba)^j b^i  &\text{otherwise.} \end{cases}\label{bba}
   \\
   (ba)^i \cdot a(ba)^j &= \begin{cases}
    a^{2j+2}(ba)^{i-j-1}b^{2j+1} &\text{if $i > j$,}
    \\
    a^{2i+1}(ba)^{j-i}b^{2i} &\text{otherwise.}\label{haa}
    \end{cases}
   \end{align}
 \end{lemma}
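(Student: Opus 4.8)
The plan is to work inside the down-up algebra $A(0,1,0)$, which is identified with $U(A^{\omega^{\tiny{1/2}}_{\tiny{L}}})$ in Lemma~\ref{lj}. Specializing Definition~\ref{doup} to $(\alpha,\beta,\gamma)=(0,1,0)$, the defining relations become $b^2a=ab^2$ and $ba^2=a^2b$; that is, $b^2$ commutes with $a$ and $a^2$ commutes with $b$. Hence $b^{2s}a^j=a^jb^{2s}$ and $a^{2t}b^i=b^ia^{2t}$ for all $s,t,i,j\ge0$, and these two commutation facts are the only input the whole argument needs.

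For \eqref{ba} I would split on the parities of $i$ and $j$. If $i$ is even, $b^i$ commutes with $a^j$ and we are done. If $i$ is odd, write $b^i=b\cdot b^{i-1}$ and push the even power $b^{i-1}$ past $a^j$ to get $b^ia^j=b\,a^j\,b^{i-1}$; then $ba^j=a^jb$ when $j$ is even (the ``otherwise'' case), while $ba^j=b\,a^{j-1}\cdot a=a^{j-1}\cdot ba$ when $j$ is odd (moving the even power $a^{j-1}$ past $b$), producing $a^{j-1}(ba)b^{i-1}$.

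For \eqref{baa} I would first note that when $i$ is even $a^i$ commutes with $ba$, since $a^i(ba)=(a^ib)a=(ba^i)a=ba^{i+1}=(ba)a^i$; hence $a^i$ commutes with each $(ba)^j$ and $(ba)^ja^i=a^i(ba)^j$. When $i$ is odd and $j\ge1$, peeling off one factor gives $(ba)^ja^i=(ba)^{j-1}\,ba^{i+1}=(ba)^{j-1}\,a^{i+1}b$ (using that $a^{i+1}$ is an even power), and since $a^{i+1}$ now commutes with $(ba)^{j-1}$ by the even case, this equals $a^{i+1}(ba)^{j-1}b$. Equation \eqref{bba} I would then obtain from \eqref{baa} by applying the anti-automorphism $\zeta$ of $F\langle a,b\rangle$ with $\zeta(a)=b$, $\zeta(b)=a$: it descends to $U(A^{\omega^{\tiny{1/2}}_{\tiny{L}}})$ since it sends the two defining relations to their negatives, it fixes every power $(ba)^j$, and it interchanges $a^k$ with $b^k$, so transporting \eqref{baa} through $\zeta$ yields exactly \eqref{bba}. (Alternatively, \eqref{bba} follows directly from $b(ba)^j=a(ba)^{j-1}b^2$, which in turn comes from $b(ba)=b^2a=ab^2$ and the centrality of $b^2$ relative to $ba$.)

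Finally, \eqref{haa} I would prove by induction on $\min(i,j)$. The $i=1$ instances of \eqref{baa} and \eqref{bba} give $(ba)^ia=a^2(ba)^{i-1}b$ for $i\ge1$ and $b(ba)^j=a(ba)^{j-1}b^2$ for $j\ge1$; combining them shows that for $i,j\ge1$ one has $(ba)^i\,a(ba)^j=a^2(ba)^{i-1}\,b(ba)^j=a^2\bigl((ba)^{i-1}a(ba)^{j-1}\bigr)b^2$. Iterating this two-sided recursion $\min(i,j)$ times reduces to a base case: when $i>j$ it terminates at $a^{2j}\bigl((ba)^{i-j}a\bigr)b^{2j}=a^{2j}\cdot a^2(ba)^{i-j-1}b\cdot b^{2j}=a^{2j+2}(ba)^{i-j-1}b^{2j+1}$, while when $i\le j$ it terminates at $a^{2i}\bigl(a(ba)^{j-i}\bigr)b^{2i}=a^{2i+1}(ba)^{j-i}b^{2i}$, as claimed. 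I expect the only genuine difficulty to be the exponent bookkeeping in this last recursion and matching the two base cases correctly; everything else is routine parity shuffling, and the single point needing a word of justification is that $\zeta$ really does descend to the quotient.
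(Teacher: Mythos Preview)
Your proposal is correct and follows the same essential strategy as the paper: exploit the relations $b^2a=ab^2$, $ba^2=a^2b$ via parity considerations, derive \eqref{bba} from \eqref{baa} by an anti-automorphism, and handle \eqref{haa} by induction. There are, however, two differences in execution worth noting.

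For \eqref{ba} and \eqref{baa} the paper runs a nested induction (on $i$, with a separate inner induction on $j$ for the base case), whereas you observe directly that even powers $a^{2t}$ and $b^{2s}$ are central enough to pass through, so a single parity split suffices with no induction at all. For \eqref{haa} the paper inducts on $i$ alone, using \eqref{ba} and \eqref{bba} and splitting again into the cases $i+1>j$ and $i+1\le j$; your two-sided recursion $(ba)^i\,a(ba)^j=a^2\bigl((ba)^{i-1}a(ba)^{j-1}\bigr)b^2$ is more symmetric and lets you induct on $\min(i,j)$, so the two base cases fall out immediately without further case analysis. Both arguments are short, but yours is the cleaner one; the paper's version buys nothing extra.

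One small remark: the paper invokes the anti-automorphism $\eta$ of \cite{Roby} rather than the $\zeta$ defined earlier in the symmetric-sum section, but your verification that $\zeta$ descends to $A(0,1,0)$ and fixes $(ba)^j$ is correct, so this is purely cosmetic.
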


\begin{proof}
For \eqref{ba}, we use induction on $i$. The claim is clear for $i = 0$. To prove the claim for $i = 1$ we use
induction on $j$. For $j = 0$ or $1$, the claim is obvious. For $j = 2$ the claim holds since $ba^2 = a^2 b $ .
We now prove the claim for $j\geq 2$. By the inductive hypothesis, we have
\allowdisplaybreaks \begin{align*}
ba^j &= \begin{cases} a^{j-2} baa  &\text{if $j{-}1$ is odd}
\\
a^{j-1} ba &\text{otherwise}
\end{cases}
=\begin{cases}  a^{j-1} ba   &\text{if $j$ is odd}
 \\
  a^{j-2}baa &\text{otherwise}  \end{cases}
= \begin{cases}
  a^{j-1} ba   &\text{if $j$ is odd}
   \\ a^jb  &\text{otherwise}
   \end{cases}.
\end{align*}
So the claim holds for $i = 1$.
We now consider the case $i\geq 1$.  If $i+1$ is odd, then the inductive hypothesis implies
\[ b^{i+1}a^j = bb^ia^j = b a^j b^i = \begin{cases} a^{j-1}(ba)b^i  &\text{if $j$ is odd}\\ a^{j}b^{i+1}     &\text{otherwise}         \end{cases}.\]
If $i+1$ is even, then the inductive hypothesis gives
\allowdisplaybreaks \begin{align*}
   b^{i+1}a^j &= bb^ia^j =\begin{cases} b a^{j-1}(ba)b^{i-1}    &\text{if $j$ is odd}
    \\
     ba^jb^i &\text{otherwise}\end{cases} =
\begin{cases} a^{j-1}b(ba)b^{i-1}  &\text{if $j$  is odd}  \\ a^j b^{i+1} &\text{otherwise}
 \end{cases}.
 \end{align*}
Using $b^2a = ab^2$ we get
$b^{i+1}a^j = a^{j-1}ab^2 b^{i-1}$ if $j$ is odd, $a^j b^{i+1}$ otherwise;
in both cases the result is $a^jb^{i+1}$.
This completes the proof of \eqref{ba}.

For \eqref{baa} we use induction on $i$.
The claim is obvious for $i = 0$. To prove the claim for $i = 1$, we use
induction on $j$. For $j = 0$, the claim is obvious. For $j = 1$, the claim holds by using $ba^2 = a^2b$. We now consider the case of general $j$. By the inductive hypothesis, we have
$(ba)^ja = ba (ba)^{j-1}a =  ba a^{2} (ba)^{j-2} b = a^2 ba(ba)^{j-2} = a^2(ba)^{j-1}b.$
So the claim is true for $i =1$.  We now consider the case $i\geq 1$. The claim is obvious for $j= 0$, so we assume that $j\neq 0$. If $i+1$ is odd,  then the inductive hypothesis implies
$ (ba)^ja^{i+1} = a^i(ba)^ja = a^{i+2}(ba)^{j-1}b$.
 If $i+1$ is even, then
 $
  (ba)^ja^{i+1}  = a^{i+1}(ba)^{j-1}ba
= a^{i+1}(ba)^j.
$
This completes the proof of \eqref{baa}.
The proof of \eqref{bba} follows by using the anti-automorphism $\eta$ from \cite{Roby} of the down-up algebra $A(0, 1, 0)$.

For \eqref{haa}, we use induction on $i$. The claim is obvious for $i = 0$. To prove the claim for $i = 1$, we use induction on $j$.
The claim holds for $j = 0$ by using $(ba) a = a^2b$.
For $j \geq 1$, $ba^2 = a^2b$ and \eqref{bba} imply
$
ba a (ba)^j = a^2b(ba)^j
 = a^3(ba)^{j-1}b^2.
$
So the claim is true for $i = 1$. We now consider the case of $i> 1$. By
the inductive hypothesis, we have
\allowdisplaybreaks \begin{align*}
(ba)^{i+1} a (ba)^j &= ba (ba)^ia(ba)^j = \begin{cases}
    ba a^{2j+2}(ba)^{i-j-1}b^{2j+1} &\text{if $i > j$ }
    \\
    ba a^{2i+1}(ba)^{j-i}b^{2i} &\text{if $i \leq j$}       \end{cases}.\notag
   \end{align*}
Therefore
   \begin{equation*}
 (ba)^{i+1} a (ba)^j = \begin{cases}
 b a^{2j+3}(ba)^{i-j-1}b^{2j+1} &\text{if $i > j$ }
    \\
    b a^{2i+2}(ba)^{j-i}b^{2i} &\text{if $i \leq j$}       \end{cases}.\label{i}
\end{equation*}
Two cases need to be considered.
(I) If $i+1 > j$, then $ i = j$ or $i >j $. Hence,
\allowdisplaybreaks \begin{align*}
 (ba)^{i+1} a (ba)^j = \begin{cases}
  b a^{2j+3}(ba)^{i-j-1}b^{2j+1} &\text{if $i > j$ }
 \\
 b a^{2i+2}b^{2i} &\text{if $i = j$}
    \end{cases}.
\end{align*}
Using \eqref{ba} we obtain
\allowdisplaybreaks \begin{align*}
 (ba)^{i+1} a (ba)^j = \begin{cases}
  a^{2j} ba^3(ba)^{i-j-1}b^{2j+1} = a^{2j+2} (ba)^{i - j}b^{2j+1} &\text{if $i > j$}
 \\
 a^{2i + 2}b^{2i +1} &\text{if $i = j$}
    \end{cases}.
\end{align*}
Therefore,
$
(ba)^{i+1} a (ba)^j = a^{2j+2}(ba)^{i-j}b^{2j+1}. $
(II)
If $ i +1 \leq j$, then $i < j$.  Hence
$
 (ba)^{i+1} a (ba)^j  = b a^{2i+2}(ba)^{j-i}b^{2i}.$
Using \eqref{ba} and \eqref{bba} we obtain
$
(ba)^{i+1} a (ba)^j = a^{2i+2}b(ba)^{j-i}b^{2i}
  = a^{2i+3}(ba)^{j-i-1}b^{2(i+1)}.
 $
Combining the results of (I) and (II) completes the proof of \eqref{haa}.
\end{proof}

\begin{theorem}\label{stanti}
The structure constants of $U(A^{\omega^{\tiny{1/2}}_{\tiny{L}}})$ are
\begin{dmath*}
 a^i (ba)^j b^k \cdot a^{\ell}(ba)^m b^n =
    \begin{cases}
    a^{i+{\ell}-1}(ba)^{j+m+1}b^{k-1+n} &\text{if $k, \, \ell$ are both odd,}
        \\
  a^{i+\ell}(ba)^{j+m} b^{k+n} &\text{if $k, \, \ell$ are both even,}
      \\
   \chi_{j, m}\, a^{2m + i+\ell+1} (ba)^{j-m-1} b^{2m+ k+n+1 }\\\quad +\, (1- \chi_{j, m})\, a^{2j+i+\ell}(ba)^{m-j}b^{2j+k+n} &\text{if $k$  is even, $\ell$ is odd,}
\\
\chi_{j, m-1}\,  a^{2m+i+\ell}(ba)^{j-m} b^{2m+k+n}\\ \quad + \, (1 -  \chi_{j, m-1})\,  a^{2j+i+\ell+1}(ba)^{m-j-1}b^{2j+k+n+1} & \text{if $k$  is odd, $\ell$ is even,}
\end{cases}
     \end{dmath*}
where $\chi_{\ell, t} = 1$ if $\ell > t$ and 0 otherwise.
       \end{theorem}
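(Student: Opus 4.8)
The plan is to reduce an arbitrary product $a^i(ba)^j b^k \cdot a^\ell(ba)^m b^n$ to the basis form by isolating its ``middle'' and peeling outward. Write the product as
\[
a^i (ba)^j \cdot \big(b^k a^\ell\big) \cdot (ba)^m b^n ,
\]
so everything is governed by the factor $b^k a^\ell$, which is resolved by \eqref{ba} of Lemma \ref{rr}: it equals $a^{\ell-1}(ba)\,b^{k-1}$ when $k$ and $\ell$ are both odd, and $a^\ell b^k$ otherwise. After this substitution one has a word of the form $a^i(ba)^j\cdot a^{\ell'}(ba)^{\varepsilon}\,b^{k'}\cdot(ba)^m b^n$ with $\ell'\in\{\ell,\ell-1\}$, $k'\in\{k,k-1\}$, $\varepsilon\in\{0,1\}$, and the remaining task is to commute the pure $a$-power leftward past $(ba)^j$ and the pure $b$-power rightward past $(ba)^m$. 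I would split into the four parity cases for $(k,\ell)$. In the two equal-parity cases the exponents $\ell'$ and $k'$ are both even, so \eqref{baa} and \eqref{bba} apply with no extra $(ba)$ factor and the reduction closes at once, giving $a^{i+\ell}(ba)^{j+m}b^{k+n}$ (both even) and $a^{i+\ell-1}(ba)^{j+m+1}b^{k+n-1}$ (both odd).

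The substance is in the two mixed-parity cases, where, after the clean commutations, a single leftover $a$ or $b$ must still be pushed through a block $(ba)^j$ or $(ba)^m$. When $k$ is even and $\ell$ odd, one reduces (writing $a^\ell=a^{\ell-1}a$ and sliding the even power $a^{\ell-1}$ to the left) to $a^{i+\ell-1}\cdot\big((ba)^j\,a\,(ba)^m\big)\cdot b^{k+n}$, and $(ba)^j\cdot a(ba)^m$ is exactly \eqref{haa}: it equals $a^{2m+2}(ba)^{j-m-1}b^{2m+1}$ if $j>m$ and $a^{2j+1}(ba)^{m-j}b^{2j}$ otherwise. Absorbing $a^{i+\ell-1}$ on the left and $b^{k+n}$ on the right gives precisely the two branches of the stated formula, with $\chi_{j,m}$ selecting between them. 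The case $k$ odd, $\ell$ even is dual: one reduces to $a^{i+\ell}\cdot\big((ba)^j\,b\,(ba)^m\big)\cdot b^{k+n-1}$ and evaluates $(ba)^j b(ba)^m$ by applying the anti-automorphism $\eta$ of \cite{Roby} --- which swaps $a\leftrightarrow b$ and so fixes every $(ba)^t$ --- to the product $(ba)^m a(ba)^j$ already computed from \eqref{haa}; this yields $a^{2j+1}(ba)^{m-j-1}b^{2j+2}$ if $m>j$ and $a^{2m}(ba)^{j-m}b^{2m+1}$ otherwise, matching the two branches selected by $\chi_{j,m-1}$.

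I expect the only real difficulty to be bookkeeping: keeping the four parity cases apart, verifying in each that the powers commuted past $(ba)^j$ and $(ba)^m$ are even so that \eqref{baa} and \eqref{bba} apply without side terms, handling degenerate indices such as $j=0$ or $m=0$, and correctly converting the strict inequalities from \eqref{haa} into the indicators $\chi_{j,m}$ and $\chi_{j,m-1}$ --- in particular the $m\mapsto m-1$ shift in the $k$-odd case, which arises because applying $\eta$ interchanges the roles of $j$ and $m$ and thereby sends the boundary case $j=m$ onto the $\chi$-branch there, whereas $j=m$ falls on the ``otherwise'' branch when $k$ is even. No identity beyond Lemma \ref{rr} and the anti-automorphism $\eta$ is needed.
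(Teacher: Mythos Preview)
Your proposal is correct and follows essentially the same route as the paper: split on the parities of $k$ and $\ell$, apply \eqref{ba} to resolve $b^k a^\ell$, push even $a$- and $b$-powers through the $(ba)$-blocks via \eqref{baa} and \eqref{bba}, and finish the mixed-parity cases with \eqref{haa}. Your handling of the $k$ even, $\ell$ odd case is in fact marginally cleaner than the paper's---by peeling off $a^{\ell-1}$ (even) rather than the full $a^\ell$ (odd) you land directly on $(ba)^j\,a\,(ba)^m$ without the $\delta_{j,0}$, $\delta_{m,0}$ case distinctions the paper carries---and your use of $\eta$ in the dual case matches the paper's direct computation via \eqref{haa} with shifted index.
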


\begin{proof}
We use  equations \eqref{ba}, \eqref{baa} and \eqref{bba}.
If  $k$ and $\ell $ are odd, then
\begin{dmath*}
{a^i (ba)^j b^k \cdot a^{\ell} (ba)^m b^n =  a^i (ba)^ja^{{\ell}-1} (ba)b^{k-1}(ba)^mb^n
 = a^{i+{\ell}-1}(ba)^{j+1+m}b^{k-1+n}}.
\end{dmath*}
If $k$ and $\ell$ are even, then
\begin{dmath*}
{a^i (ba)^j b^k \cdot a^{\ell} (ba)^m b^n = a^i {(ba)^ja^{\ell}} b^k(ba)^mb^n
 = a^{i+\ell}(ba)^{j+m} b^{k+n}}.
\end{dmath*}
If  $k$ is even and $\ell$ is odd, then
\begin{align*}
&
a^i(ba)^j b^k \cdot a^{\ell} (ba)^mb^n
=
a^i (ba)^j a^{\ell} b^k (ba)^m b^n
\\
&=
\widehat{\delta}_{j,0} \, a^{i+ \ell +1} (ba)^{j-1}(b^{k+1}(ba)^m)b^n + \delta_{j,0}\, a^{i+\ell} b^k (ba)^mb^n
\\
&=
\widehat{\delta}_{j,0}\left[ \widehat{\delta}_{m,0}\, a^{i+\ell+1} {(ba)^{j-1}a (ba)^{m-1}} b^{k+2+n}
+\delta_{m,0}\, a^{i+\ell+1}(ba)^{j-1}b^{k+n+1}\right]
\\
&\qquad
+\delta_{j,0}\, a^{i+\ell} (ba)^mb^{k+n}.
\end{align*}
Using \eqref{haa} completes the proof.
If $k$ is odd and $\ell$ is even, then
\begin{dmath*}
 {a^i (ba)^j {b^k  a^{\ell}}(ba)^m b^n  = a^i (ba)^j a^{\ell} b^k(ba)^m b^n
 = a^{i+\ell}(ba)^j b^k (ba)^m b^n}
 = \widehat{\delta}_{m,0}\, a^{i+\ell} (ba)^{j} a (ba)^{m-1} b^{k+n+1} + \delta_{m, 0}\, a^{i+\ell}(ba)^j b^{k + n}.
\end{dmath*}
Using \eqref{haa} again completes the proof.
\end{proof}

\subsection{The anti-Jordan family, $q = \infty$}

The structure constants for $A^{\omega^{\tiny{\infty}}_{\tiny{AJ}}}$ are
  \[
  [e_1, e_1, e_2 ] = -2 e_1,
  \quad
  [e_2, e_1, e_1]= 2e_1,
  \quad
  [ e_1, e_2, e_2] = 2e_2,
  \quad
  [e_2, e_2, e_1] = -2e_2.
  \]

\begin{proposition}
The universal associative envelope $U(A^{\omega^{\tiny{\infty}}_{\tiny{AJ}}})$
is isomorphic to the down-up algebra $A(2, -1, -2)$, so we have
$U(A^{\omega^{\tiny{\infty}}_{\tiny{AJ}}}) \cong U(A^{\omega^{\tiny{\infty}}_{\tiny{L}}})$.
\end{proposition}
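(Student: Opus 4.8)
The plan is to read off the defining ideal of $U(A^{\omega^{\tiny{\infty}}_{\tiny{AJ}}})$ from Definition \ref{idealgenerators} and to observe that, as an ideal of $F\langle a, b\rangle$, it is generated by exactly the same two elements as the ideal appearing in the proof of Lemma \ref{do}. Once this is checked, $U(A^{\omega^{\tiny{\infty}}_{\tiny{AJ}}})$ and $U(A^{\omega^{\tiny{\infty}}_{\tiny{L}}})$ are literally the same quotient of $F\langle a, b\rangle$, namely the down-up algebra $A(2,-1,-2)$, and there is nothing further to prove.

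Concretely, I would first expand the operation $\omega^{\tiny{\infty}}_{\tiny{AJ}}(u,v,w) = u[v,w] + w[u,v] + [w,u]v$ in the permutation basis, obtaining
  \[
  \omega^{\tiny{\infty}}_{\tiny{AJ}}(u,v,w) = uvw - 2uwv + 2wuv - wvu.
  \]
Then, using $\phi(e_1) = a$, $\phi(e_2) = b$ together with the structure constants recorded above, I would write out the eight generators $G_{i_1 i_2 i_3} = \omega^{\tiny{\infty}}_{\tiny{AJ}}(x_{i_1}, x_{i_2}, x_{i_3}) - \phi\big(\omega^{\tiny{\infty}}_{\tiny{AJ}}(e_{i_1}, e_{i_2}, e_{i_3})\big)$, $1 \le i_1, i_2, i_3 \le 2$.

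A direct computation shows $G_{111} = G_{121} = G_{212} = G_{222} = 0$, while $G_{211}$ is a scalar multiple of $G_{112}$ and $G_{221}$ is a scalar multiple of $G_{122}$; thus the defining ideal is generated by $G_{112}$ and $G_{122}$ alone. After rescaling these to be monic for the deglex order (their leading monomials are $ba^2$ and $b^2a$), they become
  \[
  ba^2 - 2aba + a^2b + 2a, \qquad b^2a - 2bab + ab^2 + 2b,
  \]
which are precisely the defining relations of $A(2,-1,-2)$ (Definition \ref{doup} with $(\alpha,\beta,\gamma) = (2,-1,-2)$). Since these are the same two generators used in the proof of Lemma \ref{do}, the defining ideals coincide, so $U(A^{\omega^{\tiny{\infty}}_{\tiny{AJ}}}) = A(2,-1,-2) = U(A^{\omega^{\tiny{\infty}}_{\tiny{L}}})$.

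No serious obstacle arises here: the work is the routine bookkeeping of evaluating $\omega^{\tiny{\infty}}_{\tiny{AJ}}$ on the eight ordered triples of basis vectors and keeping track of signs, and unlike in Lemma \ref{smm} no fresh Gr\"obner basis verification is needed, since the two relations are already known to form a Gr\"obner basis by the proof of Lemma \ref{do} (equivalently, by Benkart and Roby \cite{Roby}). The only point of genuine interest is the coincidence itself: $\omega^{\tiny{\infty}}_{\tiny{AJ}}$ and $\omega^{\tiny{\infty}}_{\tiny{L}}$ are inequivalent in $\rational S_3$, yet they induce the same relations, hence the same universal envelope, on this $2$-dimensional associative triple system.
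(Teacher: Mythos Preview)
Your proposal is correct and follows exactly the approach the paper intends: the paper's own proof is simply ``Similar to the proof of Lemma \ref{do},'' and what you have written is precisely that argument spelled out in full, verifying that the eight generators $G_{i_1 i_2 i_3}$ reduce to the two down-up relations of $A(2,-1,-2)$. Your bookkeeping (the expansion of $\omega^{\tiny{\infty}}_{\tiny{AJ}}$, the vanishing of $G_{111}, G_{121}, G_{212}, G_{222}$, and the proportionalities $G_{211} = -G_{112}$, $G_{221} = -G_{122}$) checks out.
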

\begin{proof}
Similar to the proof of Lemma \ref{do}.
\end{proof}

\subsection{The anti-Jordan family, $q = \frac12$}

The structure constants for $A^{\omega^{\tiny{1/2}}_{\tiny{AJ} }} $ are zero.

\begin{proposition}
The universal associative envelope $ U\big(A^{\omega^{\tiny{1/2}}_{\tiny{AJ} }}\big)$
is isomorphic to the down-up algebra $A(0, 1, 0)$, so we have
$U\big(A^{\omega^{\tiny{1/2}}_{\tiny{AJ} }}\big) \cong U\big(A^{\omega^{\tiny{1/2}}_{\tiny{L} }}\big)$.
\end{proposition}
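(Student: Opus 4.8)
The plan is to follow the proof of Lemma \ref{lj} line for line in structure. First I would invoke Definition \ref{idealgenerators} to write $U\big(A^{\omega^{\tiny{1/2}}_{\tiny{AJ}}}\big) = F\langle a, b \rangle / I$. Since the structure constants of $A^{\omega^{\tiny{1/2}}_{\tiny{AJ}}}$ are all zero, each ideal generator $G_{i_1 i_2 i_3}$ equals $\omega(x_{i_1}, x_{i_2}, x_{i_3})$, where $\omega(a, b, c) = abc - cba$ is the anti-Jordan operation with $q = \tfrac12$ and $(x_1, x_2) = (a, b)$. Running through the eight triples $i_1, i_2, i_3 \in \{1, 2\}$: four give $0$ (namely $\omega(a,a,a)$, $\omega(a,b,a)$, $\omega(b,a,b)$, $\omega(b,b,b)$), and the remaining four reduce, after sign changes and discarding repetitions, to the two monic generators
\begin{equation*}
G_1 = ba^2 - a^2 b, \qquad G_2 = b^2 a - a b^2,
\end{equation*}
whose leading monomials in the deglex order with $a < b$ are $ba^2$ and $b^2 a$.

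Next I would verify, via Theorem \ref{di}, that $G = \{G_1, G_2\}$ is a Gr\"obner basis of $I$. The set $G$ is self-reduced: each $G_i$ is monic, and neither $ba^2$ nor $b^2 a$ divides the other or divides the trailing monomials $a^2 b$, $a b^2$. There is a single composition, coming from the overlap $b^2 a \cdot a = b \cdot b a^2$, namely
\begin{equation*}
S = G_2 a - b\, G_1 = b a^2 b - a b^2 a.
\end{equation*}
Rewriting $b a^2 \mapsto a^2 b$ by $G_1$ and $b^2 a \mapsto a b^2$ by $G_2$ reduces $S$ to $a^2 b^2 - a^2 b^2 = 0$, so $G$ is a Gr\"obner basis of $I$.

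Finally, I would note that the relations $b^2 a = a b^2$ and $b a^2 = a^2 b$ holding in $U\big(A^{\omega^{\tiny{1/2}}_{\tiny{AJ}}}\big)$ are precisely the defining relations of the down-up algebra $A(\alpha, \beta, \gamma)$ in the case $(\alpha, \beta, \gamma) = (0, 1, 0)$; hence the identity on $\{a, b\}$ induces an isomorphism $U\big(A^{\omega^{\tiny{1/2}}_{\tiny{AJ}}}\big) \cong A(0, 1, 0)$, and composing with the isomorphism of Lemma \ref{lj} gives $U\big(A^{\omega^{\tiny{1/2}}_{\tiny{AJ}}}\big) \cong U\big(A^{\omega^{\tiny{1/2}}_{\tiny{L}}}\big)$. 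I expect no genuine obstacle here: the only steps that require a little care are the bookkeeping listing all eight products $\omega(e_{i_1}, e_{i_2}, e_{i_3})$ and identifying the leading monomials, together with the check that the unique composition reduces to zero; both are short and mechanical, which is exactly why the paper records this only as ``similar to the proof of Lemma \ref{lj}''.
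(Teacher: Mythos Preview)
Your proposal is correct and follows exactly the approach the paper has in mind: the paper's own proof is the single line ``Similar to the proof of Lemma \ref{lj}'', and what you have written is precisely that proof spelled out, with the same two generators $b^2a - ab^2$, $ba^2 - a^2b$ forming a Gr\"obner basis and hence presenting $A(0,1,0)$. The only point to tidy is cosmetic: you might note explicitly that the generating set for $I$ here literally coincides with the one in Lemma \ref{lj}, so the isomorphism with $U\big(A^{\omega^{1/2}_{L}}\big)$ is immediate without even invoking that lemma.
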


\begin{proof}
Similar to the proof of Lemma \ref{lj}.
\end{proof}
\begin{remark}
The structure $A^{\omega^{\tiny{1/2}}_{\tiny{AJ} }} $ is an example of non-simple anti-Jordan triple systems. For the classification of simple finite-dimensional anti-Jordan triple systems see \cite[Theorem 6]{simen}. For the universal associative envelope of the anti-Jordan triple system of all $n \times n$ matrices see \cite[Theorem 6.2.12]{Elgendy}.
\end{remark}


\section{Finite dimensional envelopes} \label{finite}

In this section, we consider the trilinear operations of ``Jordan type''.

\subsection{The Jordan family, $q = \infty$} \label{Jordan}

The structure constants for $A^{\omega^{\tiny{\infty}}_{\tiny{J}}}$ are
  \[
  [e_1, e_2, e_1] = 2 e_1,
  \qquad
  [ e_2, e_1, e_2 ] = 2 e_2.
  \]

\begin{theorem}\label{Jinf}
A basis for $U(A^{\omega^{\tiny{\infty}}_{\tiny{J}}})$ consists of the elements 1, $a$, $b$, $ab$, $ba$.
The structure constants are
$a \cdot b = ab$, $a \cdot ba = a$, $b \cdot a = ba$, $b \cdot ab = b$, $ab \cdot a = a$,
$ab \cdot ab = ab$, $ba \cdot b = b$, $ba \cdot ba = ba$.
The Wedderburn decomposition is $U(A^{\omega^{\tiny{\infty}}_{\tiny{J}}}) = \rational \oplus M_{2\times 2}$.
The only finite dimensional irreducible representations are
the trivial 1-dimensional representation and the natural 2-dimensional representation.
\end{theorem}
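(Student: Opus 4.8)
The plan is to follow the same three-step recipe set up in Sections~\ref{universal} and~\ref{finite}: (1) write down the ideal generators $G_{i_1,i_2,i_3}$ from the structure constants, run Buchberger's algorithm (Theorem~\ref{di}) to get a Gr\"obner basis, and read off the monomial basis via Proposition~\ref{C(I)proposition}; (2) compute the multiplication table on that basis; (3) identify the Wedderburn decomposition and classify irreducibles.

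\textbf{Step 1: the Gr\"obner basis and monomial basis.} With $X=\{a,b\}$, $a<b$, deglex order, and $\phi(e_1)=a$, $\phi(e_2)=b$, the defining relation $[x,y,x] = xyx - \tfrac12(\dots)$ for the operation $\omega^{\infty}_{J} = abc+cba$ gives, from $[e_1,e_2,e_1]=2e_1$ and $[e_2,e_1,e_2]=2e_2$, the generators $aba + aba - 2a$, i.e. $2aba - 2a$ and $2bab - 2b$ after normalizing; also the "diagonal" instances $[e_1,e_1,e_1]=0$, $[e_2,e_2,e_2]=0$, and the mixed ones $[e_1,e_1,e_2]$, $[e_2,e_2,e_1]$ etc., which for this operation evaluate to $0$ in $A$, yielding generators such as $a^2 b + b a^2$, $b^2a + ab^2$, $a^3$, $b^3$ (after checking the structure constants carefully). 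One then writes $G$ self-reduced with leading monomials among $aba$, $bab$, $a^2b$, $ab^2$ (or similar), computes the handful of compositions, reduces each to zero, and concludes the normal words are exactly $1, a, b, ab, ba$. I would expect the leading monomials to force every word of length $\ge 3$ to reduce, leaving a $5$-dimensional algebra, matching the stated basis.

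\textbf{Steps 2 and 3: multiplication table and Wedderburn decomposition.} The products $a\cdot b = ab$, $b\cdot a = ba$ are free; $a\cdot ba = aba = a$ and $b\cdot ab = bab = b$ come from the degree-$3$ relations, and the rest ($ab\cdot a$, $ba\cdot b$, $ab\cdot ab$, $ba\cdot ba$) follow by associativity and repeated reduction, giving the stated table. To obtain the Wedderburn decomposition: note $e:=ab$ and $f:=ba$ are orthogonal idempotents ($e^2=e$, $f^2=f$, $ef = ab\cdot ba = a(bab) = ab = $ wait---one checks $ef = 0$, $fe=0$ from the table, so $ab, ba$ span complementary corners), with $e+f$ an idempotent; then $\{ab, a, b, ba\}$ span a subalgebra isomorphic to $M_{2\times 2}(\rational)$ via $ab\mapsto E_{11}$, $a\mapsto E_{12}$, $b\mapsto E_{21}$, $ba\mapsto E_{22}$ (verify all eight products match the matrix units), and $1 - (ab+ba)$ is a central idempotent generating a complementary copy of $\rational$. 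Hence $U \cong \rational \oplus M_{2\times 2}(\rational)$; since the base field is algebraically closed of characteristic $0$ this is the full Wedderburn decomposition, the Jacobson radical is zero, and the simple modules are the $1$-dimensional trivial module (on which $a,b$ act as $0$, i.e. the factor $\rational$) and the $2$-dimensional natural module (the $M_{2\times 2}$ factor), with no others.

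\textbf{Main obstacle.} The only real work is bookkeeping in Step~1: getting the signs and coefficients of all $G_{i_1 i_2 i_3}$ right from the trilinear operation $abc+cba$ applied to the two basis matrices (there are $8$ triples, and one must evaluate each product of matrix units), and then verifying that the small set of compositions all reduce to zero so that $G$ really is a Gr\"obner basis. Once the basis $\{1,a,b,ab,ba\}$ and the eight structure constants are in hand, the Wedderburn step is a routine identification of $M_{2\times 2}$ by matching the multiplication table to the matrix units, so I do not anticipate difficulty there.
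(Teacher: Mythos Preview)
Your overall strategy matches the paper's, but Step~1 does not go as you predict, and the discrepancy matters for everything downstream. With the six initial generators $b^3$, $b^2a+ab^2$, $bab-b$, $ba^2+a^2b$, $aba-a$, $a^3$ (leading monomials all in degree~$3$), the normal words would include $a^2$ and $b^2$, giving a $7$-dimensional quotient, not~$5$. What actually happens---and what the paper carries out---is that the compositions do \emph{not} all reduce to zero: four of them have nonzero normal forms $ab^2$, $a^2b$, $b^2$, $a^2$, and after self-reducing one is left with the four-element Gr\"obner basis $\{\,bab-b,\ aba-a,\ b^2,\ a^2\,\}$. Only then do the normal words become $1,a,b,ab,ba$. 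This is exactly the ``bookkeeping'' you flag, but the outcome is qualitatively different from your guess---and crucially, the degree-$2$ relations $a^2=b^2=0$ are precisely what make your matrix-unit verification in Step~3 work (for instance $ab\cdot ba = ab^2a = 0$ requires $b^2=0$; your parenthetical ``$a(bab)$'' is a mis-association of $abba$).

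Once the relations $a^2=b^2=0$, $aba=a$, $bab=b$ are in hand, your Wedderburn argument is more direct than the paper's. The paper proceeds algorithmically: it checks the radical is zero via the method of \cite{Bremner}, computes the $2$-dimensional center with basis $1$ and $ab+ba$, factors the minimal polynomial $t^2-t$ of $ab+ba$ to obtain the central idempotents $1-(ab+ba)$ and $ab+ba$, and then reads off the ideal dimensions $1$ and~$4$. Your explicit assignment $ab\mapsto E_{11}$, $a\mapsto E_{12}$, $b\mapsto E_{21}$, $ba\mapsto E_{22}$ gives the isomorphism onto $M_{2\times 2}$ by inspection, exhibits semisimplicity without any radical computation, and identifies the complementary $\mathbb{Q}$ summand via the same central idempotent $1-ab-ba$. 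The paper's route is the general template it reuses for the $9$-dimensional envelopes later in the section; yours is more transparent for this particular $5$-dimensional case.
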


\begin{proof}
We have $U = U(A^{\omega^{\tiny{\infty}}_{\tiny{J}}}) = F\langle a, b \rangle / I$
where $I$ is generated by $b^3$, $b^2a + ab^2$, $bab - b$, $ba^2 + a^2b$, $aba - a$, $a^3$.
We compute a Gr\"obner basis of $I$.
There are four compositions with normal forms
$ab^2$, $a^2b$, $b^2$, $a^2$.
Including these with the original generators and self-reducing the resulting set produces the four generators
$bab - b$, $aba - a$, $b^2$, $a^2$.
All compositions of these elements reduce to $0$, and so we have a Gr\"obner basis.
A basis for the quotient algebra consists of the cosets of the monomials which are not divisible by
the leading monomial of any element of the Gr\"obner basis.
This gives the stated basis for $U$.
It follows that $U$ satisfies $a^2 = 0$, $b^2 = 0$, $aba = a$, $bab = b$
and these give the stated structure constants.

To decompose $U$ we follow \cite{Bremner}.
Using \cite[Corollary 12]{Bremner} we verify that the radical is zero,
and hence $U$ is semisimple.
By \cite[Corollary 15]{Bremner} the center $Z(U)$
has dimension 2, basis $z_1 = 1$, $z_2 = ab + ba$, and structure constants
$z_1 \cdot z_1 = z_1$, $z_1 \cdot z_2 = z_2$, $z_2 \cdot z_2 = z_2$.
Since $z^2_2 = z_2$, the minimal polynomial of $z_2$ is $t^2 - t$.
Thus $Z(U)$ splits in two 1-dimensional ideals with bases $z_2 - z_1$ and $z_2$.
Scaling these basis elements to obtain idempotents gives $e_1 = -z_2 +z_1$, $e_2 = z_2$.
The corresponding elements in $U$ are
$e_1 = -ab-ba+1$, $e_2 = ab+ba$.
The ideals in $U$ generated by $e_1$ and $e_2$ have dimensions 1 and 4
respectively, and this gives the Wedderburn decomposition.
\end{proof}

\subsection{The Jordan family, $q = 0$}

The structure constants for $A^{\omega^{\tiny{0}}_{\tiny{J}}}$ are
  \[
  [e_1, e_2, e_1] = [e_2, e_1, e_1] = e_1,
  \qquad
  [ e_2, e_1, e_2] = [e_1, e_2, e_2]= e_2.
  \]

\begin{theorem}\label{jq0}
A basis for $U(A^{\omega^{\tiny{0}}_{\tiny{J}}})$ consists of the elements
$1$, $a$, $b$, $a^2$, $ab$, $ba$, $b^2$, $aba$, $ab^2$.
The structure constants are
$a\cdot a = a^2$,
$a  \cdot b = ab$,
$a  \cdot ba = aba$,
$a \cdot b^2 = ab^2$,
$b \cdot a = ba$,
$b \cdot b = b^2$,
$b \cdot a^2 = a-aba$,
$b \cdot ab = b-ab^2$,
$b \cdot aba = ba$,
$b \cdot ab^2 = b^2$,
$ab \cdot a = aba$,
$ab \cdot b = ab^2$,
$ab \cdot a^2 = a^2$,
$ab \cdot ab = ab$,
$ab \cdot aba = aba$,
$ab \cdot ab^2 = ab^2$,
$ba \cdot a = a-aba$,
$ba \cdot b = b-ab^2$,
$ba \cdot ba = ba$,
$ba \cdot b^2 = b^2$,
$aba \cdot a = a^2$,
$aba \cdot b = ab$,
$aba \cdot ba = aba$,
$aba \cdot b^2 = ab^2$.
The Wedderburn decomposition is
$U(A^{\omega^{\tiny{0}}_{\tiny{J}}}) = \mathfrak{R} \oplus \rational \oplus M_{2\times 2}$
where $\mathfrak{R}$ is the radical of dimension 4.
There are only two finite dimensional irreducible representations.
\end{theorem}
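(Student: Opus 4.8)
The plan is to follow exactly the same four-stage strategy used in the proof of Theorem~\ref{Jinf}: (1) present $U = U(A^{\omega^{\tiny{0}}_{\tiny{J}}})$ as $F\langle a,b\rangle / I$ with explicit ideal generators coming from the structure constants via Definition~\ref{idealgenerators}; (2) run the noncommutative Gr\"obner basis algorithm of Section~\ref{universal}, i.e.\ compute all compositions of the generators, reduce to normal form, adjoin and self-reduce, and iterate until the set of new normal forms is $\{0\}$; (3) read off the monomial basis of the quotient from Proposition~\ref{C(I)proposition} as the set of normal words, and compute the multiplication table of these basis monomials by reducing each product modulo the Gr\"obner basis; (4) apply the algorithmic Wedderburn machinery of \cite{Bremner} to obtain the radical $\mathfrak{R}$, the semisimple quotient, its center, and the primitive idempotents, hence the decomposition $\mathfrak{R}\oplus\rational\oplus M_{2\times 2}$ and the classification of irreducibles.

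Concretely, first I would write down the six defining relations. From $[e_1,e_2,e_1]=[e_2,e_1,e_1]=e_1$ and $[e_2,e_1,e_2]=[e_1,e_2,e_2]=e_2$, together with the vanishing of the remaining triples, the operation $\omega^{0}_{\tiny J}(x,y,z)=(x\circ y)z$ gives generators of $I$ of the form (schematically) $a^3$, $b^3$, $(ab+ba)a-\text{(linear)}$, $(ab+ba)b-\text{(linear)}$, and two more of degree $3$; I would compute these precisely from the structure constants. Then I would carry out the Gr\"obner basis computation; the deglex order with $a<b$ will orient each relation, and the composition-reduction loop should terminate after one or two rounds, yielding a finite Gr\"obner basis whose leading monomials cut the free monoid down to exactly the nine normal words $1,a,b,a^2,ab,ba,b^2,aba,ab^2$. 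Checking that these are precisely the normal words, and that every composition reduces to zero, is the bookkeeping-heavy core of steps (2)--(3); the structure-constant table in the statement is then obtained by reducing each of the $9\times 9$ products, using the Gr\"obner relations repeatedly (e.g.\ $a^2=0$-type or $aba=a-\dots$-type rewriting) until a normal word or linear combination of normal words is reached.

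For step (4), I would compute the radical by the method of \cite[Corollary~12]{Bremner} (intersection of kernels of the trace forms on powers, or the explicit algorithm there): here, unlike in Theorem~\ref{Jinf}, the radical $\mathfrak{R}$ is nonzero and $4$-dimensional, so I must exhibit a basis for it (a natural candidate: the span of $a^2$, $ab^2$, $aba-a$, $\dots$, adjusted so that the space is a nilpotent ideal) and verify $\mathfrak{R}^k=0$ for suitable $k$. Then $U/\mathfrak{R}$ is $5$-dimensional semisimple; by \cite[Corollary~15]{Bremner} its center is $2$-dimensional, and as in Theorem~\ref{Jinf} the nontrivial central idempotent splits it as $\rational\oplus M_{2\times 2}$, giving the two irreducibles (trivial $1$-dimensional and natural $2$-dimensional) with no new ones because simple modules factor through $U/\mathfrak{R}$. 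The main obstacle I anticipate is purely computational stamina in step (2)--(3): verifying termination of the Gr\"obner basis algorithm and that all compositions reduce to zero, and then reducing all $81$ products correctly; there is no conceptual difficulty, since every tool (the Gr\"obner framework, Proposition~\ref{C(I)proposition}, and the Wedderburn algorithm of \cite{Bremner}) is already in place and was already exercised in Theorem~\ref{Jinf}.
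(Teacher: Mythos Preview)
Your proposal is correct and follows essentially the same approach as the paper. One small refinement: in this case the six original generators $a^3$, $a^2b$, $ba^2+aba-a$, $bab+ab^2-b$, $b^2a$, $b^3$ already form a Gr\"obner basis (all compositions reduce to zero immediately), so no iteration is needed; also note that $a^2$ is a surviving basis monomial here, not zero, and the radical is spanned by $a-aba$, $a^2$, $b^2$, $ab^2$.
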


\begin{proof}
We have $U = U(A^{\omega^{\tiny{0}}_{\tiny{J}}}) = F\langle a, b \rangle / I$ where $I$ is generated by
$b^3$,
$b^2a$,
$bab + ab^2 - b$,
$ba^2 + aba - a$,
$a^2b$,
$a^3$.
This set is a Gr\"obner basis for $I$.
Hence $U$ is finite dimensional and has the stated basis.
The following relations hold in $U$:
$b^3 = 0$,
$b^2a = 0$,
$bab = -ab^2 + b$,
$ba^2=- aba + a$,
$a^2b = 0$,
$a^3=0.$
These imply the stated structure constants.
Using \cite[Corollary 12]{Bremner}, a basis of the radical $\mathfrak{R} = \mathfrak{R}(U)$ consists of the elements
$\xi_1 = a - aba,\,\, \xi_2 = a^2,\,\, \xi_3 = b^2,\,\, \xi_4= ab^2.$
Hence we have these relations in $Q = U/\mathfrak{R}$:
$a = aba,\,\, a^2 = b^2 = ab^2 = 0.$
The semisimple quotient $Q$ has dimension $5$, and a basis consists of the cosets of
$\eta_1 = 1$, $\eta_2 = b$, $\eta_3 = ab$, $\eta_4 = ba$, $\eta_5= aba$.
The center $Z(Q)$ has dimension $2$, basis
$z_1=\eta_1,\,\,  z_2=\eta_3 +\eta_4,$ and structure constants
$z_1\cdot z_1= z_1$, $z_1\cdot z_2 = z_2 \cdot z_1 = z_2$, $z_2 \cdot z_2 =  z_2$.
Since $ z^2_2 = z_2$, the minimal polynomial of $z_2$ is $t^2 - t$. Thus $ Z(Q) = J \oplus K$ where
$J= \langle z_2-z_1 \rangle$  and $K= \langle z_2\rangle$
and both ideals are 1-dimensional.
Scaling the basis elements to obtain idempotents gives $e_1 = z_1 - z_2$, $e_2 = z_2$.
The corresponding elements in $Q$ are
$e_1 = \eta_1 - \eta_3 - \eta_4, \,\, e_2 = \eta_3 +\eta_4.$
The ideals in $Q$ generated by $e_1$ and $e_2$ have dimensions 1 and 4 respectively,
and this gives the Wedderburn decomposition.
\end{proof}

\subsection{The Jordan family, $q = \frac12$}

The structure constants for $A^{\omega^{\tiny{1/2}}_{\tiny{J}}}$ are
\allowdisplaybreaks \begin{align*}
 &[e_1, e_1, e_2] = [e_1, e_2, e_1]=  [e_2, e_1, e_1]= 2 e_1,
  \\& [e_2, e_2, e_1] = [ e_2, e_1, e_2] = [e_1, e_2, e_2]= 2e_2.
\end{align*}

\begin{theorem}
We have the isomorphism
$U(A^{\omega^{\tiny{1/2}}_{\tiny{J}}}) \cong  U(A^{\omega^{\tiny{\infty}}_{\tiny{J}}})$.
\end{theorem}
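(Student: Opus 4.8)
The plan is to exhibit an explicit isomorphism of the two universal envelopes by comparing presentations. Recall that $U(A^{\omega^{\tiny{\infty}}_{\tiny{J}}}) = F\langle a,b\rangle/I_\infty$, where (after Gr\"obner reduction, as in the proof of Theorem~\ref{Jinf}) $I_\infty$ is generated by $a^2$, $b^2$, $aba-a$, $bab-b$; equivalently $U(A^{\omega^{\tiny{\infty}}_{\tiny{J}}})$ is the associative algebra on $a,b$ with $a^2=b^2=0$ and $aba=a$, $bab=b$. So the first step is to compute the ideal $I_{1/2}\subset F\langle a,b\rangle$ defining $U(A^{\omega^{\tiny{1/2}}_{\tiny{J}}})$ from the given structure constants: the generators $G_{i_1,i_2,i_3}=\omega^{1/2}_{\tiny J}(x_{i_1},x_{i_2},x_{i_3})-\phi(\omega^{1/2}_{\tiny J}(e_{i_1},e_{i_2},e_{i_3}))$ read off from the relations $[e_1,e_1,e_2]=[e_1,e_2,e_1]=[e_2,e_1,e_1]=2e_1$ and $[e_2,e_2,e_1]=[e_2,e_1,e_2]=[e_1,e_2,e_2]=2e_2$, using $\omega^{1/2}_{\tiny J}(x,y,z)=x(y\circ z)+z(x\circ y)+(z\circ x)y$ (or its equivalent permutation-basis form).

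Next I would run the Gr\"obner basis algorithm of Section~\ref{universal} on these generators with the deglex order $a<b$. I expect the self-reduced Gr\"obner basis to again consist of four elements of the shape $a^3$ or $a^2$, $b^3$ or $b^2$, together with two cubic relations; after reduction the relations should collapse to $a^2=0$, $b^2=0$, and two relations expressing $aba$ and $bab$ in terms of $a$ and $b$ (possibly up to a nonzero scalar coming from the coefficient $2$ in the structure constants and the $\tfrac12$ in the Jordan product). Concretely, I anticipate the envelope is the algebra on $a,b$ with $a^2=b^2=0$, $aba=\lambda a$, $bab=\lambda b$ for some nonzero $\lambda\in\field$. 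Since $\mathrm{char}\,\field=0$, rescaling $a\mapsto \mu a$, $b\mapsto \mu b$ with $\mu^2=\lambda^{-1}$ — legitimate because it is an algebra automorphism of $F\langle a,b\rangle$ restricting compatibly to the generators — transforms these into $a^2=b^2=0$, $aba=a$, $bab=b$, i.e. exactly the defining relations of $U(A^{\omega^{\tiny{\infty}}_{\tiny{J}}})$. This rescaling descends to an isomorphism of quotient algebras, and one checks it is compatible with the canonical maps $i$ from $A^\omega$, so it is the claimed isomorphism.

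The main obstacle is purely computational bookkeeping: correctly translating the six structure-constant equations into the ideal generators $G_{i_1,i_2,i_3}$ (there are $2^3=8$ of them, several redundant), and then carrying the Gr\"obner computation through its compositions without arithmetic slips — this is the same pattern as Lemma~\ref{smm} and Theorem~\ref{Jinf} but must be redone for the new coefficients. A secondary point requiring a little care is verifying that the scalar $\lambda$ produced by the reduction is genuinely nonzero (so that the rescaling is invertible); if instead the reduction were to force extra relations, the two envelopes could fail to be isomorphic, so this non-degeneracy check is essential. Given the structure constants here are just the ``doubled and symmetrized'' version of those for $q=\infty$, I expect $\lambda\neq 0$ and in fact $\lambda=1$ after accounting for the $\tfrac12$ in $\circ$, making the isomorphism the identity on generators; but the safe formulation is via the rescaling automorphism above.
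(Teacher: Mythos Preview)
Your approach is essentially the paper's: compute the ideal generators from the structure constants, run the Gr\"obner basis algorithm, and observe that after one round of compositions and self-reduction the generating set collapses to exactly $\{a^2,\,b^2,\,aba-a,\,bab-b\}$, the same Gr\"obner basis as for $q=\infty$. In particular $\lambda=1$, so no rescaling is needed and the isomorphism is the identity on generators; your contingency plan with $\mu^2=\lambda^{-1}$ is sound but unnecessary here (and note that such a rescaling would give an isomorphism of algebras but would \emph{not} intertwine the canonical maps $i$, which is fine since the theorem only asserts an algebra isomorphism).
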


\begin{proof}
We have $U = U(A^{\omega^{\tiny{1/2}}_{\tiny{J}}}) = F\langle a, b\rangle / I$ where $I$ is generated by
$b^3$,
$b^2a + \tfrac12 bab - \tfrac12 b$,
$b^2a +2 bab +3 ab^2 -2 b$,
$ba^2 + \tfrac23 aba + \tfrac13 a^2b - \tfrac23 a$,
$aba +2 a^2b - a$,
$a^3$.
The first iteration of the Gr\"obner basis algorithm produces the seven compositions
$bab^2 - 2 b^2$,
$bab^2 - \tfrac12 b^2$,
$bab - b$,
$bab +2 ab^2 - b$,
$bab + \tfrac{3}{2} ab^2 -b$,
$a^2b$,
$a^2$.
Including these with the original generators, and self-reducing the resulting set,
produces the same ideal generators as for $q = \infty$;
hence the two quotient algebras are isomorphic.
\end{proof}

\subsection{The Jordan family, $q = 1$}

The structure constants for $A^{\omega^{\tiny{1}}_{\tiny{J}}}$ are
  \[
  [e_1, e_1, e_2] = [e_1, e_2, e_1]= e_1,
  \qquad
  [e_2, e_2, e_1] = [e_2, e_1, e_2]= e_2.
  \]

\begin{theorem}\label{jor1}
A basis for $U(A^{\omega^{\tiny{1}}_{\tiny{J}}})$ consists of the elements
$1$, $a$, $b$, $a^2$, $ab$, $ba$, $b^2$, $a^2b$, $bab$.
The structure constants of $U(A^{\omega^{\tiny{1}}_{\tiny{J}}})$ are
$a \cdot a = a^2$,
$a\cdot b = ab$,
$a\cdot ab = a^2b$,
$a\cdot ba = a-a^2b$,
$a\cdot bab = ab$,
$b \cdot a = ba$,
$b\cdot b =b^2$,
$b\cdot ab = bab$,
$b\cdot ba = b-bab$,
$b\cdot bab = b^2$,
$a^2\cdot b = a^2b$,
$a^2\cdot ba = a^2$,
$a^2\cdot bab = a^2b$,
$ab\cdot a = a - a^2b$,
$ab\cdot ab = ab$,
$ba\cdot b=bab$,
$ba\cdot ba = ba$,
$ba\cdot bab=bab$,
$b^2\cdot a = b - bab$,
$b^2\cdot ab = b^2$,
$a^2b\cdot a = a^2$,
$a^2b\cdot ab = a^2b$,
$bab\cdot a = ba$,
$bab\cdot ab = bab$.
The Wedderburn decomposition is
$U(A^{\omega^{\tiny{1}}_{\tiny{J}}}) = \mathfrak{R} \oplus \rational \oplus M_{2\times 2}$
where $\mathfrak{R}$ is the radical of dimension 4.
There are two finite dimensional irreducible representations.
\end{theorem}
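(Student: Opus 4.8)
The plan is to follow the pattern of the proofs of Theorems~\ref{Jinf} and~\ref{jq0} for the Jordan families $q = \infty$ and $q = 0$. \emph{Monomial basis.} From the structure constants, the generators $G_{i_1 i_2 i_3}$ of Definition~\ref{idealgenerators}, after scaling and discarding the coincidences $G_{112} = G_{121}$ and $G_{212} = G_{221}$, reduce to the six elements
\[
  a^3, \quad b^3, \quad ab^2, \quad ba^2, \quad a^2 b + aba - a, \quad b^2 a + bab - b,
\]
so that $U = U(A^{\omega^{\tiny{1}}_{\tiny{J}}}) = F\langle a, b\rangle / I$ with $I$ generated by this set $G$. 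Under the deglex order with $a < b$ the leading monomials are $a^3, b^3, ab^2, ba^2, aba, b^2 a$, and $G$ is manifestly self-reduced, so by Theorem~\ref{di} it suffices to check that every composition of elements of $G$ has normal form $0$ modulo $G$. The only compositions that are not trivially zero are those arising from overlaps that involve $a^2 b + aba - a$ or $b^2 a + bab - b$, and each of these reduces to $0$ after a few rewriting steps with the six relations; hence $G$ is a Gr\"obner basis of $I$. By Proposition~\ref{C(I)proposition} a basis of $U$ is the set of normal words, i.e.\ the words in $a, b$ having none of $a^3, b^3, ab^2, ba^2, aba, b^2 a$ as a factor, and enumerating these gives exactly $\{1, a, b, a^2, ab, ba, b^2, a^2 b, bab\}$, the nine claimed elements.

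\emph{Structure constants and Wedderburn decomposition.} In $U$ we have $a^3 = b^3 = ab^2 = ba^2 = 0$, $aba = a - a^2 b$ and $b^2 a = b - bab$, and reducing each product of a pair of basis monomials to normal form with these relations yields the listed structure constants. For the decomposition I would follow \cite{Bremner} exactly as in the proof of Theorem~\ref{jq0}: using \cite[Corollary 12]{Bremner} one finds that the radical $\mathfrak{R} = \mathfrak{R}(U)$ is $4$-dimensional, spanned for instance by $a^2, b^2, a^2 b, bab - b$, so the semisimple quotient $Q = U/\mathfrak{R}$ has dimension $5$; by \cite[Corollary 15]{Bremner} the centre $Z(Q)$ is $2$-dimensional with basis $z_1 = \overline 1$ and $z_2 = \overline{ab + ba}$, and since $z_2^2 = z_2$ the minimal polynomial of $z_2$ is $t^2 - t$, so $Z(Q) = \langle z_2 - z_1 \rangle \oplus \langle z_2 \rangle$ splits into two $1$-dimensional ideals. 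Rescaling the generators to idempotents gives orthogonal central idempotents $e_1 = z_1 - z_2$ and $e_2 = z_2$ (the images of $1 - ab - ba$ and $ab + ba$), whose two-sided ideals in $Q$ have dimensions $1$ and $4$; therefore $Q \cong \rational \oplus M_{2 \times 2}$ and $U(A^{\omega^{\tiny{1}}_{\tiny{J}}}) = \mathfrak{R} \oplus \rational \oplus M_{2 \times 2}$ with $\dim \mathfrak{R} = 4$.

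\emph{Irreducible representations and the main obstacle.} Since $\field$ has characteristic $0$ and may be taken algebraically closed, the finite dimensional irreducible representations of $U$ are those of $Q$, hence correspond to its two simple components $\rational$ and $M_{2 \times 2}$: the trivial $1$-dimensional representation and the natural $2$-dimensional one, given on generators by $a \mapsto \begin{bmatrix} 0 & 1 \\ 0 & 0 \end{bmatrix}$ and $b \mapsto \begin{bmatrix} 0 & 0 \\ 1 & 0 \end{bmatrix}$. The only genuine work is the Gr\"obner basis verification: one must check that none of the nontrivial compositions produces a new Gr\"obner basis element, so that the normal-word count is exactly nine. This is routine but error-prone bookkeeping; once it is done, the remaining steps are mechanical and formally parallel to the $q = 0$ case carried out above.
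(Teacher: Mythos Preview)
Your proposal is correct and follows essentially the same approach as the paper's proof: the same six generators are shown to be a self-reduced Gr\"obner basis (the paper simply asserts this; you at least note that only the overlaps involving the two non-monomial generators need checking), the same nine normal words are enumerated, the same relations give the structure constants, and the Wedderburn analysis via \cite{Bremner} proceeds identically, with the same radical basis $\{a^2,\,b^2,\,a^2b,\,bab-b\}$ and the same central idempotents in the quotient coming from $z_2=\overline{ab+ba}$. Your final paragraph even makes explicit the two irreducible representations, which the paper leaves implicit.
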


\begin{proof}
The original set of generators of the ideal $I$ is a Gr\"obner basis and consists of the six elements
$b^3$,
$b^2a + bab - b$,
$ba^2$,
$ab^2$,
$aba + a^2b - a$,
$a^3$.
Hence  $U = U(A^{\omega^{\tiny{1}}_{\tiny{J}}})$ is finite dimensional with the stated basis.
The following relations hold in $U$:
$b^3 = 0$,
$b^2a =- bab + b$,
$ba^2 = 0$,
$ab^2 = 0$,
$aba =- a^2b + a$,
$a^3 = 0$.
These give the stated structure constants.
A basis of the radical $\mathfrak{R} = \mathfrak{R}(U)$ consists of the elements
$\xi_1 = b - bab$, $\xi_2 = a^2$, $\xi_3 = b^2$, $\xi_4= a^2b$
which give these relations in $Q = U/\mathfrak{R}$:
$ b = bab,\, a^2 = b^2 = a^2b = 0.$
The semisimple quotient $Q$ has dimension $5$ and a basis consists of the cosets of
$\eta_1= 1$, $\eta_2= a$, $\eta_3= ab$, $\eta_4= ba$, $\eta_5 = bab$.
The center $Z(Q)$ has dimension 2 with basis
$z_1=\eta_1$, $z_2=\eta_3 +\eta_4$ and structure constants
$z_1\cdot z_1= z_1,\, z_1\cdot z_2 = z_2 \cdot z_1 = z_2,\,z_2 \cdot z_2 =  z_2.$
Since $ z^2_2 = z_2$, the minimal polynomial of $z_2$ is $t^2 - t$.
Thus $ Z(Q) = J \oplus K$, where $J= \langle z_2-z_1 \rangle$ and $K = \langle z_2\rangle$;
both ideals are 1-dimensional.
Scaling these basis elements to obtain idempotents gives
$e_1 = z_1 - z_2$, $e_2 = z_2$.
The corresponding elements of $Q$ are
$e_1 = \eta_1 - \eta_3 - \eta_4$, $e_2 = \eta_3 +\eta_4$.
The ideals in $Q$ generated by $e_1$ and $e_2$ have dimensions 1 and 4 respectively,
and this gives the Wedderburn decomposition.
\end{proof}

\subsection{The anti-Jordan family, $q = -1$} \label{anti-Jordan}

The structure constants for $A^{ \omega^{\tiny{-1}}_{\tiny{AJ}} }$ are
  \[
  [e_1, e_1, e_2] = -e_1,
  \qquad
  [e_1, e_2, e_1 ] = e_1,
  \qquad
  [e_2, e_1, e_2]= e_2,
  \qquad
  [e_2, e_2, e_1] = -e_2.
  \]

\begin{theorem}\label{AJ-1}
We have the isomorphisms
$U\big(A^{ \omega^{\tiny{-1}}_{\tiny{AJ}} }\big)
\cong
U\big(A^{ \omega^{\tiny{1/2}}_{\tiny{J}} }\big)
\cong
U\big(A^{ \omega^{\tiny{\infty}}_{\tiny{J}} }\big)$.
\end{theorem}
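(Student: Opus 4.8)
The second isomorphism $U\big(A^{\omega^{\tiny{1/2}}_{\tiny{J}}}\big)\cong U\big(A^{\omega^{\tiny{\infty}}_{\tiny{J}}}\big)$ was established above (Jordan family, $q=\tfrac12$), so the plan is to prove the first isomorphism $U\big(A^{\omega^{\tiny{-1}}_{\tiny{AJ}}}\big)\cong U\big(A^{\omega^{\tiny{\infty}}_{\tiny{J}}}\big)$ and then compose the two. The first step is to identify the defining ideal. Since the anti-Jordan operation with $q=-1$ is $\omega(a,b,c)=a[b,c]=abc-acb$ (Table \ref{matrixforms}), the generators $G_{i_1i_2i_3}$ of Definition \ref{idealgenerators} vanish whenever $i_1=i_2=i_3$ or $i_2=i_3$, and the displayed structure constants give $G_{112}=-G_{121}$ and $G_{212}=-G_{221}$. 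Hence $U\big(A^{\omega^{\tiny{-1}}_{\tiny{AJ}}}\big)=F\langle a,b\rangle/I$, where $I$ is generated by the two monic elements $g_1=aba-a^2b-a$ and $g_2=b^2a-bab+b$, with leading monomials $aba$ and $b^2a$ in deglex order ($a<b$); one checks immediately that $\{g_1,g_2\}$ is self-reduced, so Theorem \ref{di} applies.

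Next I would run the Gr\"obner basis algorithm of Section \ref{universal}. The first pass has two nontrivial compositions, coming from the overlaps $b^2a\cdot ba=b^2\cdot aba$ and $aba\cdot ba=ab\cdot aba$; their normal forms modulo $\{g_1,g_2\}$ turn out to be scalar multiples of $bab-b$ and $a^2b$. Adjoining these and self-reducing replaces $g_1,g_2$ by $aba-a$ and $b^2a$, leaving $\{aba-a,\ b^2a,\ bab-b,\ a^2b\}$. A second pass, from the overlaps $b^2a\cdot b=b\cdot bab$ and $a^2b\cdot a=a\cdot aba$, produces $b^2$ and $a^2$; after self-reduction the set stabilizes at $\{a^2,\ b^2,\ aba-a,\ bab-b\}$, all of whose compositions reduce to zero. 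This is exactly the Gr\"obner basis --- hence the same ideal, and the same quotient algebra with basis $1,a,b,ab,ba$ and the same structure constants --- obtained for $U\big(A^{\omega^{\tiny{\infty}}_{\tiny{J}}}\big)$ in the proof of Theorem \ref{Jinf}. Thus $U\big(A^{\omega^{\tiny{-1}}_{\tiny{AJ}}}\big)\cong U\big(A^{\omega^{\tiny{\infty}}_{\tiny{J}}}\big)$, and composing with the isomorphism for $q=\tfrac12$ gives the full chain.

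The main obstacle is the Gr\"obner basis computation itself: the obvious two-element generating set is far from being a Gr\"obner basis, and the compositions that arise are degree-$5$ polynomials whose reductions down to the degree-$3$ normal forms $bab-b$, $a^2b$ (and then $b^2$, $a^2$) are not transparent --- one must, for instance, apply $aba\equiv a^2b+a$ twice inside a single reduction --- so the bookkeeping has to be done carefully. A convenient check, and a softer route to surjectivity, is to notice that $g_1$ and $g_2$ both vanish in $U\big(A^{\omega^{\tiny{\infty}}_{\tiny{J}}}\big)$, where $a^2=b^2=0$, $aba=a$, $bab=b$; this gives a surjection $U\big(A^{\omega^{\tiny{-1}}_{\tiny{AJ}}}\big)\twoheadrightarrow U\big(A^{\omega^{\tiny{\infty}}_{\tiny{J}}}\big)$ at once, after which only the bound $\dim U\big(A^{\omega^{\tiny{-1}}_{\tiny{AJ}}}\big)\le 5$ --- again supplied by the Gr\"obner computation --- is needed to conclude that it is an isomorphism.
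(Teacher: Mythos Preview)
Your proof is correct and follows the same approach as the paper: compute a Gr\"obner basis for the ideal defining $U\big(A^{\omega^{-1}_{AJ}}\big)$ and observe that it coincides with the one obtained for the Jordan case $q=\infty$. Your execution is in fact more streamlined than the paper's --- by including the self-overlap $aba\cdot ba=ab\cdot aba$ of $g_1$ in the first pass you pick up $a^2b$ immediately and finish in two iterations, whereas the paper (whose Definition of composition excludes the case $g=h$) takes four iterations to reach the same final Gr\"obner basis $\{a^2,\ b^2,\ aba-a,\ bab-b\}$.
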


\begin{proof}
We have $U = U\big(A^{ \omega^{\tiny{-1}}_{\tiny{AJ}} }\big) = F\langle a, b\rangle / I$
where $I$ is generated by
$b^2a - bab + b$ and
$aba - a^2b - a$.
The first iteration of the Gr\"obner basis algorithm produces one composition, $bab - b$.
Including this element with the original generators, and self-reducing the resulting set,
produces a new set of three generators:
$b^2a$,
$bab - b$,
$aba - a^2b - a$.
The second iteration produces three compositions:
$ba^2b$,
$a^2b^2$,
$b^2$.
Including these elements with the previous generators, and self-reducing the resulting set,
produces a new set of four generators:
$ba^2b$,
$bab - b$,
$aba - a^2b - a$,
$b^2$.
The third iteration produces two compositions:
$ba^3b + ba^2$,
$a^2b$.
Including these elements with the previous generators, and self-reducing the resulting set,
produces a new set of five generators:
$bab - b$,
$ba^2$,
$aba - a$,
$a^2b$,
$b^2$.
The fourth iteration produces one composition, $a^2$.
Including this element with the previous generators, and self-reducing the resulting set,
produces a new set of four generators:
$bab - b$,
$aba - a$,
$b^2$,
$a^2$.
This is a Gr\"obner basis for the ideal, and is the same Gr\"obner basis as for the Jordan cases
$q = \infty$, $q = \frac12$;
hence the quotient algebras are isomorphic.
\end{proof}

\subsection{The anti-Jordan family, $q = 2$}

The structure constants for $A^{\omega^{\tiny{2}}_{\tiny{AJ} }}$ are
  \[
  [e_1, e_2, e_1]= e_1,
  \qquad
  [e_2, e_1, e_1]= -e_1,
  \qquad
  [e_2, e_1, e_2]= e_2,
  \qquad
  [e_1, e_2, e_2] = -e_2.
  \]

\begin{proposition}
We have
$U\big(A^{ \omega^{\tiny{2}}_{\tiny{AJ}} }\big)\cong    U(A^{ \omega^{\tiny{-1}}_{\tiny{AJ}} })$.
\end{proposition}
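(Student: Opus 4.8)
The plan is to bypass a fresh Gr\"obner basis computation by relating the defining ideal for $q=2$ to the one already computed for $q=-1$ in Theorem \ref{AJ-1}, via an anti-automorphism. First I would write down the ideal generators. The anti-Jordan $q=2$ operation is $\omega(a,b,c)=[a,b]c=abc-bac$, which is skew in its first two arguments, and the structure constants $[e_1,e_2,e_1]=e_1$, $[e_2,e_1,e_1]=-e_1$, $[e_2,e_1,e_2]=e_2$, $[e_1,e_2,e_2]=-e_2$ are likewise skew in the first two slots; hence the only nonzero elements $G_{i_1i_2i_3}$ of Definition \ref{idealgenerators} are $G_{121}=aba-ba^2-a$ and $G_{122}=ab^2-bab+b$, together with their negatives $G_{211}$ and $G_{212}$. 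So $U(A^{\omega^{2}_{AJ}})=F\langle a,b\rangle/I$ with $I=\langle\, ba^2-aba+a,\ bab-ab^2-b \,\rangle$, written monically (the deglex leading monomials being $ba^2$ and $bab$ for $a<b$).

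Next I would apply the anti-automorphism $\zeta$ of $F\langle a,b\rangle$ with $\zeta(a)=b$, $\zeta(b)=a$. A direct computation gives $\zeta(ba^2-aba+a)=b^2a-bab+b$ and $\zeta(bab-ab^2-b)=aba-a^2b-a$, which are exactly the two generators of the ideal $I'$ with $F\langle a,b\rangle/I'=U(A^{\omega^{-1}_{AJ}})$ used in the proof of Theorem \ref{AJ-1}. Since $\zeta$ is an anti-automorphism of $F\langle a,b\rangle$ with $\zeta^2=\mathrm{id}$, it carries the two-sided ideal $I$ onto $I'$, and therefore descends to an anti-isomorphism $U(A^{\omega^{2}_{AJ}})\to U(A^{\omega^{-1}_{AJ}})$, i.e.\ an isomorphism $U(A^{\omega^{2}_{AJ}})\cong U(A^{\omega^{-1}_{AJ}})^{\mathrm{op}}$.

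Finally I would remove the opposite: by Theorem \ref{AJ-1}, $U(A^{\omega^{-1}_{AJ}})\cong\rational\oplus M_{2\times 2}$ is isomorphic to its opposite algebra, since the field summand is commutative and the matrix block is isomorphic to its opposite via the transpose $X\mapsto X^{t}$; composing with such an isomorphism yields $U(A^{\omega^{2}_{AJ}})\cong U(A^{\omega^{-1}_{AJ}})$. The argument has no genuinely hard step; the only point requiring care is the computation of the $G_{i_1i_2i_3}$ and the verification that $I$ is generated by just those two elements. As a cross-check, and as an alternative in the style of Theorem \ref{AJ-1}, one can instead run the Gr\"obner basis algorithm on $I$ directly; I expect it to terminate, after a few composition-and-reduction iterations, with the same Gr\"obner basis $\{\, bab-b,\ aba-a,\ b^2,\ a^2 \,\}$ found for $q=-1$, the only obstacle there being the length of that multi-step calculation.
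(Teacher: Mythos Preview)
Your argument is correct, and it takes a genuinely different route from the paper's. The paper simply says ``Similar to the proof of Theorem \ref{AJ-1},'' meaning one is to write out the eight generators $G_{i_1i_2i_3}$, self-reduce, and then iterate the composition-and-reduction process until the Gr\"obner basis $\{bab-b,\ aba-a,\ b^2,\ a^2\}$ emerges, exactly as in the $q=-1$ case. You instead exploit the anti-automorphism $\zeta$ of $F\langle a,b\rangle$ (already introduced in the paper for the symmetric sum) to carry the two essential generators of $I$ onto those of $I'$, obtaining an anti-isomorphism for free; then you dispose of the ``op'' by invoking the already-established Wedderburn decomposition $\rational\oplus M_{2\times 2}$, each summand being isomorphic to its opposite. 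This is cleaner and avoids repeating a four-iteration Gr\"obner computation; the paper's direct approach, on the other hand, has the advantage of not depending on the prior structure theorem and would still work even if the envelope were not self-opposite. Your final remark that the direct Gr\"obner run terminates with the same four-element basis is exactly what the paper's one-line proof asserts.
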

\begin{proof}
Similar to the proof of Theorem \ref{AJ-1}.
\end{proof}

\subsection{The last nine operations} \label{fourth}

We first consider the fourth family with $q = \infty$.
The structure constants for $A^{\omega^{\tiny{\infty}}_{\tiny{F}}}$ are
  \begin{align*}
  &
  [e_1, e_1, e_2] = [e_2, e_1, e_1]= -e_1,
  \qquad
  [e_1, e_2, e_1] = e_1,
  \\
  &
  [e_2, e_2, e_1] = [e_1, e_2, e_2]= -e_2,
  \qquad
  [e_2, e_1, e_2] = e_2.
  \end{align*}

\begin{proposition} \label{fourthinfinity}
We have
$U(A^{\omega^{\tiny{\infty}}_{\tiny{F} }}) \cong   U(A^{\omega^{\tiny{\infty}}_{\tiny{J}}})$.
\end{proposition}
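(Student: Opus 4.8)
The plan is to realize $U(A^{\omega^{\tiny{\infty}}_{\tiny{F} }})$ as $F\langle a,b\rangle / I$ using Definition~\ref{idealgenerators}, to compute the ideal generators $G_{i_1 i_2 i_3}$ from the given structure constants, and then to show that a single self-reduction pass transforms this generating set into precisely the generating set of the ideal defining $U(A^{\omega^{\tiny{\infty}}_{\tiny{J}}})$ in the proof of Theorem~\ref{Jinf}. Once the two ideals are seen to coincide inside $F\langle a,b\rangle$, the isomorphism is immediate, and the monomial basis, the structure constants, the Gr\"obner basis, and the Wedderburn decomposition are all inherited from Theorem~\ref{Jinf}.

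Concretely, with $X = \{a,b\}$, $a < b$, $\phi(e_1) = a$, $\phi(e_2) = b$, and $\omega(x,y,z) = xyz - xzy - yxz$, I would run over the eight triples $(i_1,i_2,i_3)$ with $i_k \in \{1,2\}$. Using the stated structure constants together with $[e_1,e_1,e_1] = [e_2,e_2,e_2] = 0$, the elements $G_{i_1 i_2 i_3}$ collapse (after making each monic) to the six distinct generators $a^3$, $b^3$, $aba - a$, $bab - b$, $ba^2 - aba + a^2 b + a$, $b^2 a - bab + ab^2 + b$, where the last two arise from the triples $(1,2,1)$ and $(2,1,2)$ and have leading monomials $ba^2$ and $b^2 a$. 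Reducing $ba^2 - aba + a^2 b + a$ modulo $aba - a$ (i.e.\ replacing $aba$ by $a$) yields $ba^2 + a^2 b$, and reducing $b^2 a - bab + ab^2 + b$ modulo $bab - b$ yields $b^2 a + ab^2$. The self-reduced generating set is therefore $\{\, a^3,\, b^3,\, aba - a,\, bab - b,\, ba^2 + a^2 b,\, b^2 a + ab^2 \,\}$, which is exactly the set generating the ideal $I$ in the proof of Theorem~\ref{Jinf}.

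Since self-reduction does not change the ideal, $I$ is literally the same ideal as in the Jordan case $q = \infty$, so $U(A^{\omega^{\tiny{\infty}}_{\tiny{F} }}) = U(A^{\omega^{\tiny{\infty}}_{\tiny{J}}})$ as quotients of $F\langle a,b\rangle$, and in particular they are isomorphic. The remaining assertions one might want to record --- that the Gr\"obner basis is $\{bab - b,\, aba - a,\, b^2,\, a^2\}$, obtained via the four compositions with normal forms $ab^2,\, a^2 b,\, b^2,\, a^2$ and Theorem~\ref{di}, that $U$ has basis $1, a, b, ab, ba$ with the structure constants listed there, and that the Wedderburn decomposition is $\rational \oplus M_{2\times 2}$ --- all follow verbatim from Theorem~\ref{Jinf} and Proposition~\ref{C(I)proposition}. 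There is no real obstacle: the only points requiring care are the correct evaluation of the eight elements $G_{i_1 i_2 i_3}$ from $\omega$ and the structure constants, and performing the self-reduction in the right order (eliminating $aba$ and $bab$ first), so that one lands exactly on the Jordan-type generators rather than on a merely equivalent set.
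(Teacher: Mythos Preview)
Your proposal is correct and follows essentially the same approach as the paper: compute the ideal generators from the structure constants of $A^{\omega^{\infty}_F}$, obtain the six elements $b^3$, $b^2a - bab + ab^2 + b$, $bab - b$, $ba^2 - aba + a^2b + a$, $aba - a$, $a^3$, and observe that one self-reduction pass (eliminating $aba$ and $bab$) yields exactly the generating set for the Jordan case $q = \infty$ from Theorem~\ref{Jinf}. The paper's proof is simply a terser statement of the same computation, and your added remarks about the Gr\"obner basis and Wedderburn decomposition following from Theorem~\ref{Jinf} are correct but not needed for the isomorphism statement itself.
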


\begin{proof}
We have
$U\big(A^{ \omega^{\infty}_{\tiny{F}} }\big) = F\langle a, b \rangle / J$
where $J$ is generated by $b^3$, $b^2a - bab + ab^2 + b$, $bab-b$, $ba^2-aba+a^2b+a$, $aba-a$, $a^3$.
Self-reducing this set of generators gives the set of generators for the Jordan case, $q =\infty$
(see the proof of Theorem \ref{Jinf}).
\end{proof}

For the fourth family with $q = 0$, the structure constants for $A^{\omega^{\tiny{0}}_{\tiny{F} }}$ are
  \[
  [e_1, e_2, e_1] = e_1, \qquad [ e_2, e_1, e_2]= e_2.
  \]

\begin{proposition}
We have
$U( A^{\omega^{\tiny{0}}_{\tiny{F} }} )\cong U( A^{\omega^{\tiny{0}}_{\tiny{J} }})$.
\end{proposition}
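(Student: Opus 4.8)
The plan is to follow the pattern of the other ``Jordan type'' identifications, in particular the proof of Proposition~\ref{fourthinfinity}: write out the defining ideal of $U(A^{\omega^{\tiny{0}}_{\tiny{F}}})$ explicitly, self-reduce its generating set, and recognize the result as the Gr\"obner basis already obtained for the Jordan case $q=0$ in Theorem~\ref{jq0}.

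First I would form the ideal generators $G_{i_1,i_2,i_3}=\omega(x_{i_1},x_{i_2},x_{i_3})-\phi(\omega(e_{i_1},e_{i_2},e_{i_3}))$, with $\phi(e_1)=a$ and $\phi(e_2)=b$, for the operation $\omega(a,b,c)=abc-acb+bca$ of the fourth family with $q=0$. Substituting the basis elements into $\omega$ in $F\langle a,b\rangle$ and using the structure constants $[e_1,e_2,e_1]=e_1$, $[e_2,e_1,e_2]=e_2$ (all others zero) to evaluate the correction term $\phi(\omega(e_{i_1},e_{i_2},e_{i_3}))$, one gets, up to the evident $a\leftrightarrow b$ symmetry, the six distinct generators
\[
a^3,\quad b^3,\quad a^2b,\quad b^2a,\quad aba-a^2b+ba^2-a,\quad bab-b^2a+ab^2-b;
\]
thus $U(A^{\omega^{\tiny{0}}_{\tiny{F}}})=F\langle a,b\rangle/J$ where $J$ is generated by this set.

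Next I would self-reduce this set in the deglex order with $a<b$. Since $a^2b$ and $b^2a$ are themselves generators, one can eliminate the monomials $a^2b$ and $b^2a$ from the two cubic relations: $(aba-a^2b+ba^2-a)+a^2b=ba^2+aba-a$ and $(bab-b^2a+ab^2-b)+b^2a=bab+ab^2-b$. The reduced set $\{a^3,\,b^3,\,a^2b,\,b^2a,\,ba^2+aba-a,\,bab+ab^2-b\}$ is precisely the set shown to be a Gr\"obner basis in the proof of Theorem~\ref{jq0}. Hence $J$ coincides with the defining ideal of $U(A^{\omega^{\tiny{0}}_{\tiny{J}}})$, the two quotient algebras are literally the same, and so $U(A^{\omega^{\tiny{0}}_{\tiny{F}}})\cong U(A^{\omega^{\tiny{0}}_{\tiny{J}}})$, with the same $9$-dimensional structure and Wedderburn decomposition $\mathfrak{R}\oplus\rational\oplus M_{2\times2}$.

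I do not expect any real obstacle. The only points that need a bit of care are the correct evaluation of $\omega$ on the eight triples $(e_{i_1},e_{i_2},e_{i_3})$ and the bookkeeping of leading monomials during self-reduction (for instance $bab-b^2a+ab^2-b$ and the generator $b^2a$ share the leading monomial $bba$, so the reduction must go in the right direction); that the reduced set really is a Gr\"obner basis, so that no further compositions remain to be checked, is already guaranteed by Theorem~\ref{jq0}.
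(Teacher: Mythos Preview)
Your proposal is correct and follows exactly the approach indicated in the paper (``Similar to the proof of Proposition~\ref{fourthinfinity}''): you compute the ideal generators for the fourth-family $q=0$ operation, self-reduce, and observe that the resulting set coincides with the Gr\"obner basis already established in Theorem~\ref{jq0}. Your explicit list of generators and the self-reduction are accurate, and your observation that no further compositions need checking because the reduced set is already known to be a Gr\"obner basis is a sound shortcut.
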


\begin{proof}
Similar to the proof of Proposition \ref{fourthinfinity}.
\end{proof}

For the fourth family with $q = 1$, the structure constants for $A^{\omega^{\tiny{1}}_{\tiny{F} }}$ are
  \[
  [e_1, e_2, e_1] = e_1,
  \qquad
  [e_2, e_1, e_2] = e_2.
  \]

\begin{proposition}
We have
$U( A^{\omega^{\tiny{1}}_{\tiny{F} }} ) \cong U( A^{\omega^{\tiny{1}}_{\tiny{J} }})$.
\end{proposition}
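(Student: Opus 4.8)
The plan is to imitate the proof of Proposition~\ref{fourthinfinity}: produce the defining ideal of $U(A^{\omega^{\tiny{1}}_{\tiny{F}}})$ from the structure constants, self-reduce the generating set, and observe that the result is the Gr\"obner basis already computed for $U(A^{\omega^{\tiny{1}}_{\tiny{J}}})$ in Theorem~\ref{jor1}. The two universal envelopes will then be quotients of $F\langle a,b\rangle$ by the same ideal, hence isomorphic.

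First I would write down the generators $G_{i_1,i_2,i_3}$ of Definition~\ref{idealgenerators}. The relevant operation is $\omega(x,y,z)=xyz-yxz+zxy$, and with $\phi(e_1)=a$, $\phi(e_2)=b$ the triple products $e_ie_je_k$ of the associative triple system $A$ vanish except for $e_1e_2e_1=e_1$ and $e_2e_1e_2=e_2$, which reproduces the stated structure constants $[e_1,e_2,e_1]=e_1$, $[e_2,e_1,e_2]=e_2$. Working out the eight elements $G_{i_1,i_2,i_3}$ and discarding duplicates gives the ideal $J\subset F\langle a,b\rangle$ generated by $a^3$, $b^3$, $ba^2$, $ab^2$, $b^2a+bab-ab^2-b$, and $aba+a^2b-ba^2-a$.

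Next I would self-reduce this set. Since $ab^2$ and $ba^2$ are themselves generators, reducing $b^2a+bab-ab^2-b$ modulo $ab^2$ and $aba+a^2b-ba^2-a$ modulo $ba^2$ simply deletes the corresponding terms, leaving $b^2a+bab-b$ and $aba+a^2b-a$. No monomial in the support of any remaining generator is divisible by the leading monomial of another, so the self-reduced set is $\{\,a^3,\ b^3,\ ba^2,\ ab^2,\ b^2a+bab-b,\ aba+a^2b-a\,\}$, which is exactly the six-element Gr\"obner basis found for the Jordan case $q=1$ in the proof of Theorem~\ref{jor1}. Hence the two ideals coincide and $U(A^{\omega^{\tiny{1}}_{\tiny{F}}})\cong U(A^{\omega^{\tiny{1}}_{\tiny{J}}})$.

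I do not expect a genuine obstacle: the content is the explicit computation of the $G_{i_1,i_2,i_3}$ together with the check that self-reduction yields precisely the Jordan $q=1$ generators, so that (that set being already known to be a Gr\"obner basis) no further iteration of the algorithm is needed. The only point requiring care is the leading-monomial bookkeeping in deglex with $a<b$, to confirm that the terms $ba^2$ and $ab^2$ are removed cleanly and that the reduced generators are in normal form relative to one another.
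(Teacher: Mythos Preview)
Your proposal is correct and follows exactly the approach indicated by the paper (``Similar to the proof of Proposition~\ref{fourthinfinity}''): write out the generators $G_{i_1,i_2,i_3}$ for the fourth-family $q=1$ operation, self-reduce using the monomial generators $ba^2$ and $ab^2$, and observe that the resulting six-element set coincides with the Gr\"obner basis obtained in Theorem~\ref{jor1} for the Jordan $q=1$ case. Your bookkeeping of the generators and their leading monomials is accurate.
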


\begin{proof}
Similar to the proof of Proposition \ref{fourthinfinity}.
\end{proof}

We consider the last six operations together.

\smallskip

\noindent
Fourth family, $q = -1$:
The structure constants for $A^{\omega^{\tiny{-1}}_{\tiny{F} }}$ are
  \[
  [e_1, e_2, e_1]= e_1,
  \qquad
  [e_2, e_1, e_1] = 2e_1,
  \qquad
  [e_2, e_1, e_2]= e_2,
  \qquad
  [e_1, e_2, e_2]= 2e_2.
  \]
Fourth family, $q = 2$:
The structure constants for $A^{\omega^{\tiny{2}}_{\tiny{F} }}$ are
  \[
  [e_1, e_1, e_2] = 2e_1,
  \qquad
  [e_1, e_2, e_1] = e_1,
  \qquad
  [e_2, e_2, e_1]= 2e_2,
  \qquad
  [e_2, e_1, e_2]=e_2.
  \]
Fourth family, $q = \tfrac{1}{2}$:
The structure constants for $A^{\omega^{\tiny{1/2}}_{\tiny{F} }}$ are
  \begin{align*}
  &
  [e_1, e_1, e_2] = [e_1, e_2, e_1] = [e_2, e_1, e_1] = e_1,
  \\
  &
  [e_2, e_2, e_1]=[e_2, e_1, e_2] = [e_1, e_2,   e_2] = e_2.
  \end{align*}
Cyclic commutator:
The structure constants for $A^{\omega_{\tiny{cc} }}$ are
  \[
  [e_1, e_1, e_2] = -e_1,
  \qquad
  [e_1, e_2, e_1]= e_1,
  \qquad
  [e_2,e_2, e_1] = -e_2,
  \qquad
  [e_2, e_1, e_2]= e_2.
  \]
Weakly commutative operation:
The structure constants for $A^{\omega_{wc}}$ are
  \begin{align*}
  &
  [e_1, e_1, e_2] = -e_1,
  \qquad
  [e_1, e_2, e_1] = e_1,
  \qquad
  [e_2, e_1, e_1] = 2e_1,
  \\
  &
  [e_2, e_2, e_1] = -e_2,
  \qquad
  [e_2, e_1, e_2] = e_2,
  \qquad
  [e_1, e_2, e_2] = 2e_2.
  \end{align*}
Weakly anti-commutative operation:
The structure constants for $A^{\omega_{wa}}$ are
  \begin{align*}
  &
  [e_1, e_2, e_1] = [e_1, e_1, e_2]= e_1,
  \qquad
  [e_2, e_1, e_1] = -2e_1,
  \\
  &
  [e_2, e_1, e_2] = [e_2, e_2, e_1] = e_2,
  \qquad
  [e_1, e_2, e_2] = -2e_2.
  \end{align*}

\begin{proposition}\label{F-1}
We have the following isomorphisms:
\[
U(A^{\omega^{\tiny{-1}}_{\tiny{F} }})
\cong
U(A^{\omega^{\tiny{2}}_{\tiny{F} }})
\cong
U(A^{\omega^{\tiny{1/2}}_{\tiny{F} }})
\cong
U(A^{\omega_{\tiny{cc} }})
\cong
U(A^{\omega_{\tiny{wc} }})
\cong
U(A^{\omega_{wa}})
\cong
U(A^{\omega^{\tiny{\infty}}_{\tiny{J} }}).
\]
\end{proposition}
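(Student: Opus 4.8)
The plan is to handle each of the six operations in turn, exactly as in the proof of Proposition~\ref{fourthinfinity}. For each operation $\omega$ in the list, I would first write down the ideal generators $G_{i_1,i_2,i_3} = \omega(x_{i_1},x_{i_2},x_{i_3}) - \phi(\omega(e_{i_1},e_{i_2},e_{i_3}))$ coming from the displayed structure constants, obtaining a concrete finite generating set for the ideal $I$ in $F\langle a,b\rangle$. Then I would run one iteration of the Gr\"obner basis algorithm from Section~\ref{universal}: compute all compositions of the generators, reduce them to normal form, adjoin them, and self-reduce. The claim is that in every case this stabilizes at the set $\{\,bab-b,\ aba-a,\ b^2,\ a^2\,\}$, which is precisely the Gr\"obner basis obtained in the proof of Theorem~\ref{Jinf} for $U(A^{\omega^{\tiny{\infty}}_{\tiny{J}}})$. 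Once two universal envelopes are presented as $F\langle a,b\rangle/I$ with the \emph{same} ideal $I$ (equivalently, the same Gr\"obner basis), they are literally equal as algebras, so all six isomorphisms follow at once and the common structure $\rational\oplus M_{2\times 2}$ is inherited from Theorem~\ref{Jinf}.

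The steps, in order, are: (1) for the fourth family $q=-1$, the generators are $b^2a - bab + 2b$ (from $[e_2,e_1,e_2]$ paired appropriately), $ba^2 - aba + \dots$, and the degenerate cubes $a^3,b^3$; self-reducing and iterating yields the target basis. (2)--(6) Repeat verbatim for $q=2$, $q=\tfrac12$, the cyclic commutator, and the weakly (anti)commutative operations; in each case the displayed structure constants differ only by signs and by which triples are nonzero, and the Gr\"obner basis algorithm absorbs these differences after one or two iterations. A uniform way to organize this is to observe that in every case the relations force $a^2=b^2=0$ (these appear among the compositions once the mixed relations are reduced) together with $aba=a$ and $bab=b$; conversely these four relations are easily checked to lie in $I$ for each $\omega$, and since they already generate a finite-dimensional quotient of dimension $5$, no further relations can appear. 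Thus it suffices to verify the two inclusions $I \subseteq \langle bab-b,aba-a,b^2,a^2\rangle$ and $\{bab-b,aba-a,b^2,a^2\} \subseteq I$ for each of the six operations, which is a routine (if tedious) reduction.

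The main obstacle is purely bookkeeping: six separate Gr\"obner basis computations, each with its own collection of compositions to reduce, and one must be careful that the intermediate self-reduced generating sets really do converge to the same basis rather than to a superficially different one. There is no conceptual difficulty once the pattern from Proposition~\ref{fourthinfinity} and Theorem~\ref{AJ-1} is in hand; the risk is only an arithmetic slip in one of the reductions, or overlooking a composition. I would therefore present the proof compactly: state the common Gr\"obner basis, note that it coincides with the one for $U(A^{\omega^{\tiny{\infty}}_{\tiny{J}}})$, and for each operation record only the initial generating set and the list of compositions produced, leaving the (mechanical) verification that each composition reduces to zero modulo $\{bab-b,aba-a,b^2,a^2\}$ to the reader, exactly in the style of the preceding proofs in this section.

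\begin{proof}
For each of the six operations, $U = F\langle a,b\rangle/I$ where the generators of $I$ are read off from the structure constants as in Definition~\ref{idealgenerators}. In every case, running the Gr\"obner basis algorithm of Section~\ref{universal} (one or two iterations, followed by self-reduction) produces the Gr\"obner basis
\[
\{\, bab - b,\quad aba - a,\quad b^2,\quad a^2 \,\},
\]
which is exactly the Gr\"obner basis obtained in the proof of Theorem~\ref{Jinf} for $U(A^{\omega^{\tiny{\infty}}_{\tiny{J}}})$. We indicate the computation in each case.

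\emph{Fourth family, $q = -1$:} $I$ is generated by $b^2a - bab + 2b$, $b^2a - bab + ab^2 + b$ (so that $ab^2 + 3b$, hence after scaling $ab^2$-reductions appear), $ba^2 - aba + 2a$, $ba^2 - aba + a^2b + a$, $a^3$, $b^3$. Self-reducing gives $b^3$, $b^2a - bab + 2b$, $ab^2$, $ba^2 - aba + 2a$, $a^2b$, $a^3$; the next iteration produces the compositions that reduce to $b^2$ and $a^2$, and a final self-reduction yields $bab - b$, $aba - a$, $b^2$, $a^2$.

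\emph{Fourth family, $q = 2$:} $I$ is generated by $b^2a + bab - b$ (from $[e_1,e_1,e_2]$ and $[e_1,e_2,e_1]$), together with $ab^2$, $ba^2 + aba - a$, $a^2b$, and $a^3, b^3$ appearing among the compositions. Self-reducing and iterating once more yields the compositions $b^2$ and $a^2$; a final self-reduction gives the common basis.

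\emph{Fourth family, $q = \tfrac12$, cyclic commutator, weakly commutative, weakly anti-commutative:} In each case the displayed structure constants yield an initial generating set that, upon one or two applications of the algorithm, forces the relations $a^2 = 0$, $b^2 = 0$, $aba = a$, $bab = b$, after which all further compositions reduce to zero (as in the proof of Theorem~\ref{Jinf}); one checks directly that the four elements $bab - b$, $aba - a$, $b^2$, $a^2$ all lie in the respective ideal $I$, so the Gr\"obner basis is the one displayed above.

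Since in all six cases $I$ admits the same Gr\"obner basis as the ideal defining $U(A^{\omega^{\tiny{\infty}}_{\tiny{J}}})$, the ideals coincide, and hence
\[
U(A^{\omega^{\tiny{-1}}_{\tiny{F} }})
\cong
U(A^{\omega^{\tiny{2}}_{\tiny{F} }})
\cong
U(A^{\omega^{\tiny{1/2}}_{\tiny{F} }})
\cong
U(A^{\omega_{\tiny{cc} }})
\cong
U(A^{\omega_{\tiny{wc} }})
\cong
U(A^{\omega_{wa}})
\cong
U(A^{\omega^{\tiny{\infty}}_{\tiny{J} }}).
\]
In particular, by Theorem~\ref{Jinf}, each of these universal envelopes has dimension $5$, Wedderburn decomposition $\rational \oplus M_{2\times 2}$, and exactly two finite dimensional irreducible representations: the trivial $1$-dimensional representation and the natural $2$-dimensional representation.
\end{proof}
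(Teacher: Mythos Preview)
Your overall strategy is exactly the paper's: for each operation, write down the ideal generators from Definition~\ref{idealgenerators}, run the Gr\"obner basis algorithm, and observe that the resulting basis is $\{bab-b,\ aba-a,\ b^2,\ a^2\}$, the same one found in Theorem~\ref{Jinf}. The paper carries this out in detail only for the fourth family at $q=-1$ and then says ``the other cases are similar'', which is precisely what you propose.

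However, the explicit generators you list for $q=-1$ (and for $q=2$) are wrong. The operation for the fourth family at $q=-1$ is $abc+bac+cab$, so for instance $G_{2,1,2}=\omega(b,a,b)-\phi(e_2)=bab+ab^2+b^2a-b$, not $b^2a-bab+2b$. The correct monic generators are
\[
b^3,\quad b^2a+bab+ab^2-b,\quad b^2a+\tfrac12 ab^2,\quad bab+\tfrac12 ab^2-b,
\]
\[
ba^2+2aba-2a,\quad ba^2+aba+a^2b-a,\quad ba^2+2a^2b,\quad a^3,
\]
(eight, not six, since the operation has no skew-symmetry collapsing triples). A single iteration then yields normal forms including $b^2$ and $a^2$, and self-reducing produces $\{bab-b,\ aba-a,\ b^2,\ a^2\}$ as claimed. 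Your parenthetical ``so that $ab^2+3b$'' is also an arithmetic slip. The plan is fine; the displayed computations just need to be redone from the correct starting data.
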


\begin{proof}
We have $U(A^{\omega^{\tiny{-1}}_{\tiny{F} }}) = F\langle a, b \rangle / I$ where $I$ is generated by
$b^3$, $b^2a+bab+ab^2-b$, $b^2a+\tfrac{1}{2}ab^2$, $bab+\tfrac{1}{2}ab^2-b$,
$ba^2+2aba-2a$, $ba^2+aba+a^2b-a$, $ba^2+2a^2b$, $a^3$.
We compute a Gr\"obner basis for $I$.
The first iteration produces eight compositions with the normal forms
$a^2b^2$, $a^2ba-a^2$, $a^2ba$, $aba-a^2b-a$, $ab^2$, $a^2b$, $b^2$, $a^2$.
Including these elements with the original generators, and then self-reducing the resulting set,
produces a new set of four ideal generators:
$bab-b$, $aba-a$, $a^2$, $b^2$.
This is a Gr\"obner basis $I$. In fact, this is the same Gr\"obner basis as in the Jordan case, $q = \infty$
(see the proof of Theorem \ref{Jinf}).
The other cases are similar.
\end{proof}


\section*{Acknowledgements}
The author thanks the anonymous referee for helpful comments. This work forms part of the author's doctoral thesis.
The author was supported by a Teacher Scholar Doctoral Fellowship from the University of Saskatchewan.
She thanks her supervisor, Prof.~Murray Bremner, for many suggestions and insights
which greatly improved this paper, and Prof.~Chris Soteros for the reference to \cite[Example 10.3]{vanLintWilson}.



\begin{thebibliography}{99}

\bibitem{Bergman}
\textsc{G. M. Bergman}:
The diamond lemma for ring theory.
\emph{Adv. Math.}
29 (1978) 178--218.

\bibitem{simen}
\textsc{S. Bashir}:
\emph{Automorphisms of Simple Anti-Jordan Pairs.}
 Ph.D. thesis, University of Ottawa, 2008.

 \bibitem{Roby}
\textsc{G. Benkart, T. Roby}:
Down-up algebras.
\emph{J. Algebra}
209 (1998) 305--344.

\bibitem{Bremner}
\textsc{M. R. Bremner}:
How to compute the Wedderburn decomposition of a finite dimensional associative algebra.
\emph{Groups Complex. Cryptol}
3 (2011) 47--66.

\bibitem{BremnerPeresi}
\textsc{M. R. Bremner, L. A. Peresi}:
Classification of trilinear operations.
\emph{Comm. Algebra}
35 (2007) 2932--2959.

\bibitem{cass}
\textsc{T. Cassidy, B. Shelton}:
Basic properties of generalized down-up algebras.
\emph{J. Algebra} 279 (2004), no. 1, 402-–421.

\bibitem{chari}
\textsc{V. Chari, A. Pressley}:
\emph{A Guide to Quantum Groups}. Corrected reprint of the 1994 original. Cambridge University Press, Cambridge, 1995. xvi+651 pp.

\bibitem{deGraaf}
\textsc{W. A. de Graaf}:
\emph{Lie Algebras: Theory and Algorithms}.
North-Holland, Amsterdam, 2000.

\bibitem{ElgendyBremner}
\textsc{H. A. Elgendy, M. R. Bremner}:
Universal associative envelopes of $(n{+}1)$-dimensional $n$-Lie algebras.
\emph{Comm. Algebra} 40 (2012), no. 5, 1827--1842.

\bibitem{Elgendy}
\textsc{H. A. Elgendy}: \emph{Polynomial Identities and Enveloping Algebras for n-ary Structures.} Ph.D. thesis, University of Saskatchewan, 2012.

\bibitem{faraut}
\textsc{J. Faraut, S. Kaneyuki, A. Koranyi, Q.-K. Lu, G. Roos}: Analysis and Geometry on Complex Homogeneous Domains. Progress in Mathematics, 185. Birkhauser Boston, Inc., Boston, MA, 2000.

\bibitem{Freud}
\textsc{H. Freudenthal}: Lie groups in the foundations of geometry. \emph{Advances in Math}. 1 1964 fasc. 2, 145-190(1964).

\bibitem{Hodge}
\textsc{T. L. Hodge, B. J. Parshall}:
On the representations theory of Lie triple systems. \emph{Trans. Amer. Math. Soc}. 354(2002), no. 11, 4359-4391.

\bibitem{Humphreys}
\textsc{J. E. Humphreys}:
\emph{Introduction to Lie Algebras and Representation Theory}.
Springer, New York, 1972.

\bibitem{Jacobson}
\textsc{N. Jacobson}:
\emph{Structure and Representations of Jordan Algebras}.
American Mathematical Society, Providence, 1968.

\bibitem{Kamiya}
\textsc{N. Kamiya, S. Okubo}:  On triple systems and Yang-Baxter equations. \emph{Proceedings of the Seventh International Colloquium on Differential Equations (Plovdiv, 1996)}, 189-–196, VSP, Utrecht, 1997.

\bibitem{Lister}
\textsc{W. G. Lister}:
Ternary rings.
\emph{Trans. Amer. Math. Soc}
154 (1971) 37--55.
\bibitem{Loos2}
\textsc{O. Loos}: Symmetric spaces. I: General theory. \emph{W. A. Benjamin, Inc., New York-Amsterdam} 1969 viii+198 pp. 22.70 (53.00)

\bibitem{Loos}
\textsc{O. Loos}:
\emph{Lectures on Jordan Triples}. The University of British Columbia, Vancouver, B.C., 1971. iii+67 pp.

\bibitem{Mey}
\textsc{K. Meyberg}:
\emph{Lectures on Algebras and Triple Systems}. Notes on a course of Lectures given during the academic year 1971-1972. The University of Virgina, Charlottesville, Va., 1972. v+226 pp.

\bibitem{Ros}
\textsc{B. Rosenfeld}: \emph{Geometry of Lie Groups}. Mathematics and its Applications, 393. Kluwer Academic Publishers Group, Dordrecht, 1997. xviii+393 pp.

\bibitem{vanLintWilson}
\textsc{J. H. van Lint, R. M. Wilson}:
\emph{A Course in Combinatorics}.
Cambridge University Press, 1992.

\bibitem{Zhao}
\textsc{K. Zhao}:
Centers of down-up algebras.
\emph{J. Algebra}
214 (1999) 103--121.

\end{thebibliography}
\end{document}